\newcommand{\n}[1]{\mathbf{ #1}}
\newcommand{\nv}{\n{n}}
\newcommand{\tv}{{\boldsymbol{\tau}}}
\newcommand{\Grad}{\nabla}
\newcommand{\R}{\mathbb{R}}
\newcommand{\N}{\mathbb{N}}
\newcommand{\vectornorm}[1]{\left \|#1 \right \|}
\newcommand{\scalar}[1]{{\left ( #1 \right )}}
\newcommand{\dual}[1]{{\left \langle #1 \right \rangle}}
\newcommand{\abs}[1]{\left | #1 \right |}
\newcommand{\admshape}{\mathcal{D}}
\newcommand{\dint}{\,d}
\newcommand{\I}{\mathrm{Id}}
\newcommand{\V}{\mathcal{V}}
\newcommand{\Z}{\mathcal{Z}}
\newcommand{\embed}{\hookrightarrow}
\newcommand{\gt}{\tilde{g}}
\newcommand{\Omegab}{\bar{\Omega}}
\newcommand{\Vd}{\n{V}}
\newcommand{\somap}{\mathbf{S}}
\newcommand{\sopot}{\mathbf{S}_p}
\newcommand{\sostokes}{\mathbf{S}_s}
\DeclareMathOperator{\im}{im}
\DeclareMathOperator{\Div}{div}
\newtheorem{remark}[theorem]{Remark}
\title{Approximate Controllability of Linearized Shape-Dependent Operators for Flow Problems}
\author{C. Leith\"auser
	\thanks{Fraunhofer ITWM, Transport Processes, Fraunhofer-Platz 1, 67663 Kaiserslautern, Germany. 
	Email: christian.leithaeuser@itwm.fraunhofer.de}
        \and R. Pinnau
        \thanks{TU Kaiserslautern, Department of Mathematics, Gottlieb-Daimler-Stra{\ss}e, 67663 Kaiserslautern, Germany.}
        \and R. Fe{\ss}ler
        \thanks{Fraunhofer ITWM, Transport Processes, Fraunhofer-Platz 1, 67663 Kaiserslautern, Germany.}}
\begin{document}

\maketitle

\begin{abstract}
 We study the controllability of linearized shape-dependent operators for flow problems. The first operator is a mapping from the shape of the computational domain to the tangential wall velocity of the potential flow problem and the second operator maps to the wall shear stress of the Stokes problem. We derive linearizations of these operators, provide their well-posedness and finally show approximate controllability. The controllability of the linearization shows in what directions the observable can be changed by applying infinitesimal shape deformations.
\end{abstract}

\begin{keywords}
 Controllablility, Shape-dependent operator, Shape optimization, Shape derivative, Partial differential equation, Inverse problem
\end{keywords}

\begin{AMS}
 93B05, 49Q10, 76B75,  35Q35, 35R30
\end{AMS}

\pagestyle{myheadings}
\thispagestyle{plain}

\section{Introduction}
We study the controllability of linearized shape-dependent operators for flow problems. The first operator $\sopot$ is a mapping from the shape of the computational domain to the tangential wall velocity of the potential flow problem and the second operator $\sostokes$ maps to the wall shear stress of the Stokes problem. On account of the shape dependence, both operators are highly nonlinear, despite of the underlying linear partial differential equations. We investigate linearizations $d \somap$ of these operators, i.e., we study in which directions the observables can by changed by applying infinitesimal shape deformations. Our ultimate goal is to prove approximate controllability for these linearized shape-dependent operators. Approximate controllability means that we can find controls for the operator such that any element from the target space is approximated with arbitrary accuracy. In \cite{leithauser2012characterizing} we have utilized a conformal pull-back to study the operator $\sopot$ directly. However, the approach presented in the following is more general and can be extended to other flow problems, as we are going to show for the Stokes operator $\sostokes$.

Our study of shape-dependent problems is motivated by the optimal shape design of polymer distributors used in the production process for filaments and nonwovens \cite{marburger2007space, leithaeuser2009shape, leithauser2013controllability, leithauser2013numerical}. The goal is to design flow geometries with specific wall shear stress profiles, similar to the problems considered in \cite{quarteroni2003optimal, rozza2005optimization}. Numerically, we can solve the regularized inverse problem of finding a flow geometry which approximately realizes a given wall shear stress, using methods from shape optimization. However, here we address the theoretical question of what wall shear stress profiles are in fact attainable. Being able to establish some sort of controllability property, even though we can only do this for the linearization, suggests that the space of reachable profiles is rather large. For our application this means that we have a good chance to design polymer distributors, whose properties are close to our expectations. This agrees with our numerical results presented in \cite{leithauser2013controllability}, where we solve an optimization problem based on the Stokes operator $\sostokes$.

The controllability of shape-dependent operators is rarely covered in the existing literature. Our approach is inspired by \cite{chenais2003controllability} where the authors study the controllability of a shape identification problem based on the Laplace problem. They show approximate controllability for the linearized operator using an adjoint argument (cf. \cite{osses1998boundary, osses1998controllability}). The operator studied in \cite{chenais2003controllability} is comparable to our operator $\sopot$ because both are based on the Laplace problem. However, the operator in \cite{ chenais2003controllability} maps to the trace evaluated on a fixed interior curve whereas here $\sopot$ maps to the normal derivative evaluated on the variable wall boundary itself, which poses different technical challenges.

A good introduction to the general theory of shape optimization, the concept of shape derivatives and many examples can be found in \cite{pironneau1984optimal} and \cite{sokolowski1992introduction}. The focus in \cite{mohammadi2001applied} is more on the application of industrial airfoil design, but it can also be seen as an excellent access to the general topic. A rigorous treatment of shape derivatives and their existence theory is provided in \cite{simon1980differentiation}. A lot of theory on shape calculus and its application to shape optimization is given in \cite{delfour2010shapes}. Surveys on recent developments are found in \cite{mohammadi2004shape, harbrecht2008analytical}. While we mostly deal with flow problems, there are various other fields of application: For instance, see \cite{allaire2004structural, penzler2010phase, hess2012inexact} for examples on structural optimization, \cite{eppler1990airfoil, mohammadi2001applied} for airfoil design and \cite{sundaramoorthi2009new} for applications in image processing.

We begin in Section \ref{sec:Geometric Setup} by introducing the geometric setup and give proper definitions for the space of admissible shapes and the linearized shape operator. In Section \ref{zua:sec:potential flow} we study the potential flow shape operator $\sopot$, derive its linearization, provide the well-posedness and finally show the approximate controllability. In Section \ref{zua:sec:stokes} we follow the same path for the Stokes operator $\sostokes$. Finally, we finish with a conclusion. In the Appendix, we state some basic facts about shape differentiability and the existence and uniqueness of solutions for partial differential equations (see Appendix \ref{sec:appendix:shape diff} and \ref{sec:appendix:existence}). The main results of this article are stated in Theorem \ref{ds:thm:approximate control pot} and Theorem \ref{ds:thm:approximate control stokes}.

\section{Geometric Setup}
\label{sec:Geometric Setup}
For $k \in \N_0$ let $\Omega_0 \subset \R^2$ be a bounded domain of class $C^{k+1,1}$ (see \cite{wloka1987partial}), where the boundary $\Gamma_0$ decomposes into the in- and outflow parts $\Gamma_0^{in}$ and wall parts $\Gamma_0^w$. Let $\nv$ be the outward pointing unit normal and let $\tv := (-\nv_2, \nv_1)^\intercal$ be the tangential vector. We define
\begin{align}
 \Theta^k = \{\theta \in C^{k,1}(\R^2, \R^2); \vectornorm{\theta}_{C^{k,1}(\R^2, \R^2)} < 0.5 \}
\end{align}
to be a ball around zero, where $C^{k,1}(\R^2, \R^2)$ denotes the space of $k$-times differentiable functions from $\R^2$ to $\R^2$ with Lipschitz-continuous derivatives up to order k (see \cite{wloka1987partial}). Let $\I \in C^{k,1}(\R^2, \R^2)$ denote the identity mapping. For $\theta \in \Theta^k$ we consider the map
\begin{align}
 \I + \theta: \R^2 \rightarrow \R^2,
\end{align}
i.e., $(\I + \theta)(x) = x+\theta(x)$. From \cite{simon1980differentiation} we know that $\vectornorm{\theta}_{C^{k,1}(\R^2, \R^2)} < 0.5$ implies that $\I + \theta$ is a $(k,1)$-diffeomorphism. In order to define the set of admissible shapes let the space of admissible deformation directions be
\begin{align}
\V^k := \{\Vd \in C^{k,1}(\R^2, \R^2); \Vd|_{\Gamma_0^{in}}=0; \Vd|_{\Gamma_0^w} = v_\nv \nv; v_\nv \in C^{k,1}(\R^2) \}.
\end{align}
Note, that since $\Omega_0$ is assumed to be of class $C^{k+1,1}$ we have $\nv \in C^{k,1}(\Gamma_0, \R^2)$. Hence, this definition makes sense. We only consider normal shape deformations, because infinitesimal tangential deformations would shift the boundary along itself and are therefore no real shape deformations. Let the intersection with $\Theta^k$ be denoted by
\begin{align}
 \Theta_\V^k := \Theta^k \cap \V^k.
\end{align}
Then, the space of admissible shapes is given by
\begin{align}
 \admshape^k := \{\Omega_\theta = (\I + \theta)(\Omega_0); \theta \in \Theta_\V^k \}.
\end{align}
Thus, $\admshape^k$ is a set of perturbations of the reference domain $\Omega_0$ which leave $\Gamma_0^{in}$ fixed and which are normal on $\Gamma_0^w$.

\begin{definition}
\label{shape:sec:linearized shape operator}
Let
\begin{align}
 \somap: \admshape^k \rightarrow L^2(\Gamma_0^w).
\end{align}
be a given shape-dependent operator. Then the corresponding linearized shape operator is defined by
\begin{align}
\begin{aligned}
 d\somap: \V^k &\rightarrow L^2(\Gamma_0^w)
 \\
 \Vd &\mapsto \frac{d\somap(\Omega_\theta)}{d\theta}(0) \Vd,
 \end{aligned}
\end{align}
i.e., it is the derivative of $\somap(\Omega_\theta)$ with respect to $\theta$ in direction $\Vd \in \V^k$ evaluated at $\theta=0$.
\end{definition}

Of course the important questions are whether the derivative does exist and how the operator can be evaluated. Both can be answered by the theory of material and shape derivatives which is provided in Appendix \ref{sec:appendix:shape diff}.

Our goal is to show approximate controllability for two different linearized shape operators \cite{chenais2003controllability}:
\begin{definition}[Approximate Controllability]
 Let $F: X \rightarrow Y$ be a linear operator. Then, $F$ is approximately controllable if and only if $\im F$ lies dense in $Y$.
\end{definition}
The definition immediately yields the following lemma which we use to show the property.
\begin{lemma}
\label{lemma:approximately controllable}
 Let $F: X \rightarrow Y$ be a linear operator and let $Y$ be a Hilbert space with scalar product $\scalar{\cdot, \cdot}_Y$. If $y \in Y$ such that
 \begin{align}
  \scalar{F(x), y}_Y = 0
  \qquad \mbox{for all $x \in X$}
 \end{align}
implies $y = 0$, then $F$ is approximately controllable.
\end{lemma}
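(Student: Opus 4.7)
The plan is a standard Hilbert space argument based on the orthogonal decomposition: a closed subspace $M \subseteq Y$ satisfies $M = Y$ if and only if $M^\perp = \{0\}$. Concretely, I want to show $\overline{\im F} = Y$ under the given hypothesis. Since $\overline{\im F}$ is a closed linear subspace of the Hilbert space $Y$, the projection theorem gives the orthogonal decomposition
\begin{align}
Y = \overline{\im F} \oplus \bigl(\overline{\im F}\bigr)^\perp.
\end{align}
Moreover, orthogonality to a set and to its closure coincide, so $\bigl(\overline{\im F}\bigr)^\perp = (\im F)^\perp$.

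Hence the first step is to observe that any $y \in (\im F)^\perp$ satisfies $(F(x), y)_Y = 0$ for all $x \in X$, which by the assumption of the lemma forces $y = 0$. This gives $(\im F)^\perp = \{0\}$, and plugging into the decomposition yields $\overline{\im F} = Y$, which is precisely the definition of approximate controllability. No step is really an obstacle here; the only thing to be slightly careful with is the identification of $(\im F)^\perp$ with $\bigl(\overline{\im F}\bigr)^\perp$, which follows from continuity of the inner product.
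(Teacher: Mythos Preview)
Your argument is correct and is exactly the standard elaboration the paper has in mind: the paper does not give a detailed proof of this lemma, merely remarking that it follows immediately from the definition of approximate controllability. Your use of the orthogonal decomposition $Y = \overline{\im F} \oplus (\overline{\im F})^\perp$ together with $(\overline{\im F})^\perp = (\im F)^\perp$ is precisely the routine Hilbert space reasoning behind that remark.
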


\section{Potential Flow}
\label{zua:sec:potential flow}

We begin our study with a potential flow shape operator which maps from the shape of the domain to the tangential wall velocity of the potential flow problem. We define the operator and derive its linearization. Then, we use the implicit function theorem to show the existence of the material derivative (see Definition \ref{def:material derivative}) which provides the well-definedness of the linearized shape operator. This also leads to the existence of the shape derivative (see Definition \ref{def:elliptic shape derivative}), which can be computed as the solution of a boundary value problem. We can then write the linearized shape operator in terms of the shape derivative and use an adjoint argument to show that it is approximately controllable.

\subsection{Definition of the Shape Operator and Problem Statement}
Let $\Omega_0 \subset \R^2$ be a bounded domain of class $C^{3,1}$ and let $g \in H^\frac{5}{2}(\Gamma_0)$ be given with $\partial_\tv g|_{\Gamma_0^w} = 0$, where $\partial_\tv$ denotes the derivative in tangential and $\partial_\nv$ the derivative in normal direction. We define the potential flow shape operator $\sopot$ by
\begin{align}
 \begin{aligned}
  \sopot: \admshape^2 &\rightarrow L^2(\Gamma_0^w)
  \\
  \Omega_\theta &\mapsto -(\partial_\nv \Psi(\theta)|_{\Gamma_\theta^w}) \circ (\I + \theta).
 \end{aligned}
\end{align}
Note, that $\partial_\nv \Psi(\theta)|_{\Gamma_\theta^w}$ is a function of $L^2(\Gamma_\theta^w)$ and that we use the the map $\I + \theta$ to pull-back this function to the space $L^2(\Gamma_0^w)$.

For $\theta \in \Theta^2$ the stream function $\Psi(\theta) \in H^2(\Omega_\theta)$ is the solution of
\begin{align}
\label{problem:ds potential}
 \begin{aligned}
  \Delta \Psi(\theta) &= 0 \qquad &&\mbox{in $\Omega_\theta$}
  \\
  \Psi(\theta) &= g \circ (\I+\theta)^{-1} \qquad &&\mbox{on $\Gamma_\theta$}.
 \end{aligned}
\end{align}

\begin{remark}
 The stream function $\Psi(\theta)$ is interpreted as the solution of a flow problem by defining the velocity $\n{u}(\theta)$ through
\begin{align}
 \n{u}(\theta) := 
 \begin{pmatrix} \partial_2 \Psi(\theta) \\ -\partial_1 \Psi(\theta) \end{pmatrix}
 \qquad \mbox{in $\Omega_\theta$}.
 \end{align}
 In that case the normal wall velocity is
 \begin{align}
  \nv \cdot \n{u}(\theta) = \partial_\tv \Psi(\theta) = \partial_\tv (g \circ (\I+\theta)^{-1})
  \qquad \mbox{on $\Gamma_\theta$}
 \end{align}
 and especially $\nv \cdot \n{u}(\theta)|_{\Gamma_\theta^w} = 0$ by definition of $g$. The tangential wall velocity is
 \begin{align}
  \tv \cdot \n{u}(\theta) = -\partial_\nv \Psi(\theta)
  \qquad \mbox{on $\Gamma_\theta$}
 \end{align}
and therefore $\sopot$ maps to the tangential velocity at the wall.

\end{remark}

We are going to show that the linearized shape operator $d\sopot$ is well-defined and given by
\begin{align}
\begin{aligned}
 d\sopot: \V^2 &\rightarrow L^2(\Gamma_0^w)
 \\
 \Vd &\mapsto - \partial_\nv \Psi'(\Vd)|_{\Gamma_0^w} - \kappa \sopot(0) (\nv \cdot \Vd),
 \end{aligned}
\end{align}
 where $\Psi'(\Vd)$ is the solution of
\begin{align}
 \begin{aligned}
  \Delta \Psi'(\Vd) &= 0 \qquad &&\mbox{in $\Omega_0$}
  \\
  \Psi'(\Vd) & = 0 \qquad &&\mbox{on $\Gamma_0^{in}$}
  \\
  \Psi'(\Vd) & = -(\nv \cdot \Vd) \partial_\nv \Psi(0) \qquad &&\mbox{on $\Gamma_0^w$}.
 \end{aligned}
\end{align}
In the rest of this section we establish the existence of $d\sopot$ and prove the following result about the approximate controllability of the linearized shape operator:
\begin{theorem}
\label{ds:thm:approximate control pot}
Assume that $\sopot(0) \neq 0$ a.e. on $\Gamma_0^w$ and suppose that the curvature $\kappa \in C^0(\Gamma_0)$ is positive $\kappa \geq 0$ on $\Gamma_0^w$. Then, $d \sopot$ is approximately controllable.
\end{theorem}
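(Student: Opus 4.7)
The plan is to invoke Lemma~\ref{lemma:approximately controllable}: pick $y\in L^2(\Gamma_0^w)$ with
\begin{align*}
\int_{\Gamma_0^w} d\sopot(\Vd)\,y\,ds=0 \quad\text{for every }\Vd\in\V^2,
\end{align*}
and show $y\equiv 0$. As $\Vd$ ranges over $\V^2$, the normal component $v_\nv:=\nv\cdot\Vd$ traces out the restrictions to $\Gamma_0^w$ of all $C^{2,1}(\R^2)$ scalars that vanish on $\Gamma_0^{in}$, and this set is dense in $L^2(\Gamma_0^w)$, so the orthogonality will reduce to a pointwise a.e.\ identity on $\Gamma_0^w$ once the integrand is made linear in $v_\nv$.

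To achieve that, I would introduce the adjoint state $\phi$ as the harmonic extension $\Delta\phi=0$ in $\Omega_0$, $\phi=y$ on $\Gamma_0^w$, $\phi=0$ on $\Gamma_0^{in}$. Applying Green's second identity to the harmonic pair $(\phi,\Psi'(\Vd))$, using the prescribed boundary values of both states together with the relation $\sopot(0)=-\partial_\nv\Psi(0)$, yields
\begin{align*}
\int_{\Gamma_0^w}\partial_\nv\Psi'(\Vd)\,y\,ds=\int_{\Gamma_0^w}v_\nv\,\sopot(0)\,\partial_\nv\phi\,ds.
\end{align*}
Substituting this into the definition of $d\sopot$ and combining with the curvature piece gives
\begin{align*}
\int_{\Gamma_0^w}d\sopot(\Vd)\,y\,ds=-\int_{\Gamma_0^w}v_\nv\,\sopot(0)\,(\partial_\nv\phi+\kappa\,y)\,ds.
\end{align*}
Density of the admissible $v_\nv$ and the hypothesis $\sopot(0)\ne 0$ a.e.\ then force $\partial_\nv\phi+\kappa\phi=0$ on $\Gamma_0^w$, using $\phi|_{\Gamma_0^w}=y$.

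What remains is uniqueness for the overdetermined mixed problem $\Delta\phi=0$ in $\Omega_0$, $\phi=0$ on $\Gamma_0^{in}$, $\partial_\nv\phi+\kappa\phi=0$ on $\Gamma_0^w$. Testing the Laplace equation against $\phi$ and integrating by parts yields
\begin{align*}
0=\int_{\Omega_0}|\nabla\phi|^2\,dx+\int_{\Gamma_0^w}\kappa\,\phi^2\,ds,
\end{align*}
and the hypothesis $\kappa\ge 0$ makes both nonnegative summands vanish separately. Thus $\nabla\phi\equiv 0$, so $\phi$ is constant in $\Omega_0$, and the Dirichlet datum on the nonempty inlet $\Gamma_0^{in}$ forces $\phi\equiv 0$, whence $y=0$.

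The step I expect to be the main obstacle is the regularity bookkeeping for $\phi$: since $y$ sits only in $L^2(\Gamma_0^w)$ and drops to zero at the junction with $\Gamma_0^{in}$, the harmonic extension is a priori only a transposition solution in $L^2(\Omega_0)$, so its normal trace in Green's identity and the closing energy estimate are not classically defined. I would circumvent this by first running the adjoint computation against smooth $v_\nv$ compactly supported in the interior of $\Gamma_0^w$ (for which $\Psi'(\Vd)\in H^2(\Omega_0)$ and all pairings are classical), obtaining the Robin relation on $\Gamma_0^w$ in a distributional sense; standard elliptic regularity for the mixed Dirichlet/Robin problem on the $C^{3,1}$ domain then lifts $\phi$ to $H^1(\Omega_0)$ and legitimizes the final energy identity.
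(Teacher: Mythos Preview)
Your proposal is correct and follows essentially the same route as the paper: introduce the harmonic adjoint $\phi$ with Dirichlet data $y$ on $\Gamma_0^w$ and $0$ on $\Gamma_0^{in}$, use Green's identity with $\Psi'(\Vd)$ to convert the orthogonality into $\partial_\nv\phi+\kappa\phi=0$ on $\Gamma_0^w$, and then kill $\phi$ by the energy identity (which is exactly the paper's Lemma~\ref{zua:lemma:Uniqueness Potential flow}). The only real difference is the regularity bookkeeping you flag at the end: the paper avoids it by testing against $\mu\in H^{3/2}_{i=0}(\Gamma_0)$ from the outset, so that $\phi(\mu)\in H^2(\Omega_0)$ and all pairings are classical, whereas you start from $y\in L^2(\Gamma_0^w)$ and bootstrap.
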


\begin{remark}
 Especially the curvature condition is fulfilled if the wall boundaries are convex. Note, that the statement still holds for curvature $\kappa \geq -\delta$ for a sufficiently small constant $\delta \geq 0$. The constant $\delta$ must be small enough such that the bilinear form  is still $V$-elliptic. Otherwise we can show that the bilinear form is $V$-coercive and prove a weaker result similar to the upcoming Theorem \ref{ds:thm:approximate control stokes}.
\end{remark}

\subsection{Existence of the Material Derivative}
One crucial point is to show the existence of the material derivative for the solution of \eqref{problem:ds potential}, because it gives rise to the well-definedness of the linearized shape operator as well as the existence of the shape derivative. Let us define
\begin{align}
 z(\theta) := \partial_\nv \Psi(\theta)|_{\Gamma_\theta}.
\end{align}
Assume that the material derivative $\dot{z}(\Vd)$ exists for $\Vd \in \V^2$ (see Definition \ref{def:material derivative bd}), then by Definition \ref{shape:sec:linearized shape operator}
\begin{align}
 d\sopot(\Vd) = \frac{d(\sopot(\Omega_\theta))}{d \theta}(0) \Vd
 = -\frac{d(z(\theta) \circ (\I + \theta))}{d \theta}(0) \Vd
 = -\dot{z}(\Vd)|_{\Gamma^w_\theta}.
\end{align}
Therefore, to get a well-defined operator $d\sopot$ we need to show that the material derivative $\dot{z}(\Vd)$ exists. First we show the existence of the material derivative $\dot{\Psi}(\Vd)$ using the implicit function theorem. We need the following regularity result for \eqref{problem:ds potential}:
\begin{lemma}
\label{ds:lemma:regularity pot}
 Let $\Omega_0$ be of class $C^{3,1}$ and assume that $g \in H^\frac{5}{2}(\Gamma_0)$. Then, there exists a unique $\Psi(\theta) \in H^2(\Omega_\theta)$ for every $\theta \in \Theta^2$. Furthermore, $\Psi(0) \in H^3(\Omega_0)$.
\end{lemma}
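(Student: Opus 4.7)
My plan is to reduce the lemma to a direct invocation of the standard shift theorem for the Dirichlet Laplacian on a domain of finite Hölder smoothness, as found in \cite{wloka1987partial}: on a domain of class $C^{m,1}$ with Dirichlet data in $H^{m+1/2}$, the harmonic extension lies in $H^{m+1}$. Since both parts of the lemma are purely regularity claims, once the geometric and boundary-data prerequisites are verified the conclusion is immediate.

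The first step is to check these prerequisites. For $\theta \in \Theta^2$, the map $\I+\theta$ is a $(2,1)$-diffeomorphism of $\R^2$ by the result of \cite{simon1980differentiation} quoted in Section \ref{sec:Geometric Setup}. Transporting a $C^{3,1}$ boundary chart of $\Omega_0$ by this diffeomorphism loses one order of regularity, so $\Omega_\theta$ is of class $C^{2,1}$, and in particular $C^{1,1}$. For the boundary data, composition with a $C^{2,1}$-diffeomorphism on the $1$-dimensional boundary preserves Sobolev regularity $H^s$ for $s \le 5/2$, so $g \circ (\I+\theta)^{-1} \in H^{5/2}(\Gamma_\theta)$.

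With these in place, existence and uniqueness of $\Psi(\theta) \in H^2(\Omega_\theta)$ follow by applying the shift theorem at $m=1$: the domain is $C^{1,1}$ and the data is $H^{3/2}$, with both hypotheses comfortably satisfied. Existence at the weak $H^1$ level can alternatively be obtained by Lax-Milgram after lifting the boundary data, and the shift theorem then upgrades the regularity to $H^2$. For the sharper claim $\Psi(0) \in H^3(\Omega_0)$, I apply the same theorem at $m=2$: at $\theta=0$ no diffeomorphism is involved, so $\Omega_0$ is genuinely $C^{3,1}$ (in particular $C^{2,1}$) and $g \in H^{5/2}(\Gamma_0)$ directly by hypothesis. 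The only real technical point is the one order of regularity lost under the $C^{2,1}$ change of variables, which is precisely why the $H^3$ bound can only be claimed at the reference configuration; this is not so much an obstacle as a book-keeping observation, and no ingredient beyond the cited shift theorem is required.
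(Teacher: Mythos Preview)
Your proof is correct and follows essentially the same route as the paper: verify that $\Omega_\theta$ is $C^{2,1}$ and that the transported boundary data is sufficiently regular, then invoke the standard elliptic shift theorem from \cite{wloka1987partial}, applying it again at $\theta=0$ with the full $C^{3,1}$/$H^{5/2}$ hypotheses to obtain the extra order. The only cosmetic difference is that the paper records the boundary datum as lying in $H^{3/2}(\Gamma_\theta)$ (which is all that is needed for the $H^2$ conclusion), whereas you note the slightly sharper $H^{5/2}$ bound; both are valid and the argument is otherwise identical.
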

\begin{proof}
 For $\theta \in \Theta^2$, $\Omega_\theta$ is of class $C^{2,1}$ and $g \circ (\I+\theta)^{-1} \in H^\frac{3}{2}(\Gamma_\theta)$. Therefore, standard existence and regularity theory for elliptic partial differential equations (see \cite{wloka1987partial}) yields $\Psi(\theta) \in H^2(\Omega_\theta)$.
 Furthermore, $\Omega_0$ is of class $C^{3,1}$ and $g \in H^\frac{5}{2}(\Gamma_0)$ which yields $\Psi(0) \in H^3(\Omega_0)$.
\end{proof}

To apply the implicit function theorem we require that the Laplace operator induces an isomorphism:
\begin{lemma}
\label{ds:lemma:iso pot}
 The Laplace operator $\Delta: H^2(\Omega_0) \cap H^1_0(\Omega_0) \rightarrow L^2(\Omega_0)$ is an isomorphism between the given spaces.
\end{lemma}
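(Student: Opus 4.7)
The plan is to verify bijectivity and continuity directly, since both ingredients follow from standard elliptic theory once the regularity of $\Omega_0$ is noted. The map $\Delta: H^2(\Omega_0) \cap H^1_0(\Omega_0) \to L^2(\Omega_0)$ is obviously continuous (it is a second-order differential operator acting between spaces designed for exactly that), so by the open mapping theorem it suffices to show that it is a bijection; continuity of the inverse is then automatic.

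For injectivity, I would let $u \in H^2(\Omega_0) \cap H^1_0(\Omega_0)$ satisfy $\Delta u = 0$. Multiplying by $u$ and integrating by parts using the zero trace on $\Gamma_0$ gives $\int_{\Omega_0} |\Grad u|^2 = 0$, and together with $u|_{\Gamma_0}=0$ this forces $u=0$ via the Poincaré inequality. This step uses only $u \in H^1_0(\Omega_0)$, so no regularity obstruction appears.

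For surjectivity, given $f \in L^2(\Omega_0) \subset H^{-1}(\Omega_0)$, the Lax--Milgram theorem applied to the bilinear form $a(u,v) = \int_{\Omega_0} \Grad u \cdot \Grad v$ on $H^1_0(\Omega_0)$ yields a unique weak solution $u \in H^1_0(\Omega_0)$ of $-\Delta u = -f$. Since $\Omega_0$ is of class $C^{3,1}$, in particular of class $C^{1,1}$, the standard $H^2$-elliptic regularity result (cited from \cite{wloka1987partial}, as already done in Lemma \ref{ds:lemma:regularity pot}) upgrades this to $u \in H^2(\Omega_0)$ with $\|u\|_{H^2(\Omega_0)} \leq C\|f\|_{L^2(\Omega_0)}$. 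Hence $u$ lies in the domain $H^2(\Omega_0) \cap H^1_0(\Omega_0)$ and satisfies $\Delta u = f$ in $L^2(\Omega_0)$.

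There is no real obstacle here: injectivity is an energy identity, surjectivity is Lax--Milgram plus $H^2$-regularity, and continuity of the inverse follows either from the regularity estimate above or from the open mapping theorem. The only point worth flagging is that the $H^2$-regularity step genuinely uses the $C^{1,1}$ smoothness of the boundary, which is comfortably provided by the standing assumption that $\Omega_0$ is of class $C^{3,1}$.
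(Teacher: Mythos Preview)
Your proposal is correct and follows essentially the same approach as the paper: both rely on standard elliptic existence and $H^2$-regularity theory (the paper simply cites \cite{wloka1987partial} for unique solvability, whereas you spell out the energy-identity injectivity, Lax--Milgram surjectivity, and open-mapping-theorem continuity that underlie that citation). There is no substantive difference in strategy, only in the level of detail.
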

\begin{proof}
 The operator is clearly linear. Let $f \in H^2(\Omega_0) \cap H^1_0(\Omega_0)$ then $\Delta f \in L^2(\Omega_0)$. On the other hand let $h \in L^2(\Omega_0)$, then there exists a unique solution $f \in H^2(\Omega_0) \cap H^1_0(\Omega_0)$ of $\Delta f = h$ (see \cite{wloka1987partial}).
\end{proof}

Now, we can show the existence of the material derivative $\dot{\Psi}(\Vd)$ of $\Psi(\theta)$. The proof relies on the inverse function theorem and the idea can be found in \cite{simon1980differentiation, sokolowski1992introduction}.
\begin{lemma}
\label{ds:lemma:existence material pot}
 Suppose that for the solution of Problem \eqref{problem:ds potential}, $\Psi(\theta) \in H^2(\Omega_\theta)$ holds for $\theta \in \Theta^2$. Then, the material derivative $\dot{\Psi}(\Vd) \in H^2(\Omega_0)$ exists for all directions $\Vd \in C^{2,1}(\R^2, \R^2)$.
\end{lemma}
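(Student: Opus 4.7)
The plan is to transport the problem back to the fixed domain $\Omega_0$ and then apply the implicit function theorem in Banach spaces, following the standard strategy of \cite{simon1980differentiation, sokolowski1992introduction}.

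First I would pull the solution back: set $\Psi^\theta := \Psi(\theta) \circ (\I+\theta) \in H^2(\Omega_0)$. A change of variables turns $\Delta \Psi(\theta) = 0$ on $\Omega_\theta$ into a transformed elliptic equation on $\Omega_0$ of the form $A(\theta) \Psi^\theta = 0$, where $A(\theta)$ is a second-order operator whose coefficients are built from the Jacobian $D(\I+\theta)$ and $\det D(\I+\theta)$. Because $\theta \in C^{2,1}$ and $\I + \theta$ is a $(2,1)$-diffeomorphism, these coefficients live in $C^{1,1}$, so $A(\theta)$ maps $H^2(\Omega_0)$ into $L^2(\Omega_0)$ continuously. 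The boundary condition becomes simply $\Psi^\theta = g$ on $\Gamma_0$, independent of $\theta$, which is exactly the point of working with the pulled-back function.

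Next I would homogenize: pick any fixed extension $\gt \in H^3(\R^2)$ of the Dirichlet data (possible since $g \in H^{5/2}(\Gamma_0)$ and $\Omega_0$ is $C^{3,1}$) and set $u^\theta := \Psi^\theta - \gt|_{\Omega_0}$, so $u^\theta \in H^2(\Omega_0) \cap H^1_0(\Omega_0)$. Then define
\begin{align}
F: \Theta^2 \times (H^2(\Omega_0) \cap H^1_0(\Omega_0)) \rightarrow L^2(\Omega_0),
\qquad F(\theta, u) := A(\theta)(u + \gt).
\end{align}
By construction $F(\theta, u^\theta) = 0$ for every $\theta \in \Theta^2$, and $F$ is jointly $C^1$: linearity in $u$ is immediate, while smooth dependence on $\theta$ follows because $\theta \mapsto D(\I+\theta)$ and $\theta \mapsto \det D(\I+\theta)$ are $C^\infty$ into $C^{1,1}$ (they are polynomial in the entries of $\Grad \theta$, and inversion of the determinant is smooth near the identity because $\vectornorm{\theta}_{C^{2,1}} < 0.5$).

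Then I would compute $\partial_u F(0, u^0)$. Since $A(0)$ is just the ordinary Laplacian, $\partial_u F(0, u^0) v = \Delta v$ for $v \in H^2(\Omega_0) \cap H^1_0(\Omega_0)$. By Lemma \ref{ds:lemma:iso pot} this is an isomorphism onto $L^2(\Omega_0)$. The implicit function theorem therefore supplies a neighbourhood $U \subset \Theta^2$ of $0$ and a $C^1$ map $\theta \mapsto u^\theta \in H^2(\Omega_0) \cap H^1_0(\Omega_0)$ solving $F(\theta, u^\theta) = 0$; uniqueness of the PDE solution in Lemma \ref{ds:lemma:regularity pot} identifies this branch with the one coming from $\Psi(\theta)$. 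Differentiating $\Psi^\theta = u^\theta + \gt$ at $\theta = 0$ in direction $\Vd \in C^{2,1}(\R^2,\R^2)$ (which is scaled into $\Theta^2$ as needed) gives the material derivative $\dot\Psi(\Vd) = \frac{d}{d\theta} u^\theta(0)\Vd \in H^2(\Omega_0)$.

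The main technical obstacle I expect is verifying that $F$ is genuinely $C^1$ with the claimed regularity, i.e., that pulling the Laplacian through $\I + \theta$ produces coefficients that depend smoothly on $\theta$ at the level of $C^{1,1}$ (so that $A(\theta): H^2 \to L^2$ is well-defined and differentiable in $\theta$). This reduces to checking that the multilinear maps $(\theta, v) \mapsto D\theta \cdot \Grad v$ etc. are bounded from $C^{2,1} \times H^2$ into $H^1 \hookrightarrow L^2$ and that $\theta \mapsto (\det D(\I+\theta))^{-1}$ is $C^1$ into $C^{1,1}$ near $\theta = 0$ — both are routine, but they are the only non-formal ingredient. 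Once this is in place, the implicit function theorem does the rest and the conclusion follows at once.
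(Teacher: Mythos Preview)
Your proposal is correct and follows essentially the same route as the paper: pull back to $\Omega_0$ via $\I+\theta$, homogenize with an $H^3$ extension $\gt$ of $g$, define $F(\theta,u)=\Delta_\theta(u+\gt)$ (your $A(\theta)$ is exactly the paper's $\Delta_\theta$ from Lemma~\ref{ds:lemma:gradf_TV}), invoke Lemma~\ref{ds:lemma:iso pot} for the isomorphism $\partial_u F(0,u^0)=\Delta$, and apply the implicit function theorem. The only cosmetic difference is that the paper dispatches the $C^1$-dependence of $\Delta_\theta$ on $\theta$ by citing \cite[(1.3)]{simon1980differentiation} rather than writing out the coefficient analysis you flag as the main technical point.
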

\begin{proof}
 Let $\gt \in H^3(\Omega_0)$ be an extension with $\gt|_{\Gamma_0} = g$. Let us define the function
 \begin{align}
  \begin{aligned}
   F: \Theta^2 \times H^2(\Omega_0) \cap H^1_0(\Omega_0) &\rightarrow L^2(\Omega_0)
   \\
   (\theta, u) &\mapsto \Delta_\theta u + \Delta_\theta \gt.
  \end{aligned}
 \end{align}
 See Lemma \ref{ds:lemma:gradf_TV} for the definition of the pulled-back Laplacian $\Delta_\theta$. 
 
 Let $\theta \in \Theta^2$. Then,
 \begin{align}
  \Delta \Psi(\theta) = 0 \qquad \mbox{in $\Omega_\theta = (\I + \theta)(\Omega_0)$}
 \end{align}
 and thus
 \begin{align}
  (\Delta \Psi(\theta)) \circ (\I + \theta) = 0 \qquad \mbox{in $\Omega_0$}.
 \end{align}
 Using Lemma \ref{ds:lemma:gradf_TV} this implies
 \begin{align}
  \Delta_\theta (\Psi(\theta) \circ (\I + \theta)) = 0  \qquad \mbox{in $\Omega_0$},
 \end{align}
 where $\Psi(\theta) \circ (\I + \theta) - \gt \in H^2(\Omega_0) \cap H^1_0(\Omega_0)$ and thus
 \begin{align}
 \label{ds:eq:material derivative pot eq1}
  F(\theta, \Psi(\theta) \circ (\I + \theta) - \gt) = 0.
 \end{align}
 
Let $u_0 := \Psi(0) -\gt \in H^2(\Omega_0) \cap H^1_0(\Omega_0)$. Then, $F(0, u_0) = 0$ and
 \begin{align}
  D_2 F(0, u_0) = \Delta: H^2(\Omega_0) \cap H^1_0(\Omega_0) \rightarrow L^2(\Omega)
 \end{align}
 is an isomorphism by Lemma \ref{ds:lemma:iso pot}. Furthermore, from \cite[(1.3)]{simon1980differentiation} we know that the operator $\Delta_\theta$ is differentiable with respect to $\theta$ and thus that $F$ is differentiable, i.e.,
 \begin{align}
  F \in C^1(\Theta^2 \times H^2(\Omega_0) \cap H^1_0(\Omega_0), L^2(\Omega_0)).
 \end{align}
 
 Then, because of the Implicit Function Theorem \ref{thm:implicit function} there exists a unique $\mathcal{G} \in C^1(\Theta^2, H^2(\Omega_0) \cap H^1_0(\Omega_0))$ and \eqref{ds:eq:material derivative pot eq1} implies
 \begin{align}
  \mathcal{G}(\theta) = \Psi(\theta) \circ (\I + \theta) - \gt
 \end{align}
 for $\theta \in \Theta^2$. Then,
 \begin{align}
  \Psi(\theta) \circ (\I + \theta) = \mathcal{G}(\theta) + \gt
 \end{align}
 is differentiable with respect to $\theta$ at $\theta=0$ and the derivative lies in $H^2(\Omega_0)$. Thus the material derivative $\dot{\Psi}(\Vd) \in H^2(\Omega_0)$ exists for $\Vd \in C^{2,1}(\R^2, \R^2)$.
\end{proof}

This yields the well-definedness of the linearized shape operator:
\begin{lemma}
\label{lemma:existence mat bd pot}
 The material derivative $\dot{z}(\Vd) \in H^{\frac{1}{2}}(\Gamma_0)$ exists for $\Vd \in C^{2,1}(\R^2, \R^2)$. Thus the operator $d \sopot$ is well-defined.
\end{lemma}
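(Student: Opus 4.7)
The goal is to transfer the material derivative of $\Psi$ (already in $H^2(\Omega_0)$ by Lemma \ref{ds:lemma:existence material pot}) to a material derivative of its normal trace on the moving boundary, and to track enough regularity to land in $H^{1/2}(\Gamma_0)$. The plan is to write $z(\theta)\circ(\I+\theta)$ as a product of factors on $\Omega_0$ (or $\Gamma_0$) each of which is clearly $C^1$ in $\theta$ with values in a suitable space, and then invoke the continuity of the trace operator.

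Concretely, set $\Psi^\theta := \Psi(\theta)\circ(\I+\theta)\in H^2(\Omega_0)$. Lemma \ref{ds:lemma:existence material pot} states that $\theta\mapsto\Psi^\theta$ is $C^1$ from $\Theta^2$ into $H^2(\Omega_0)$, hence $\theta\mapsto\Grad\Psi^\theta$ is $C^1$ into $H^1(\Omega_0;\R^2)$. Using the chain rule
\begin{align}
(\Grad\Psi(\theta))\circ(\I+\theta)=(I+D\theta)^{-\intercal}\Grad\Psi^\theta,
\end{align}
and recalling that $\theta\mapsto(I+D\theta)^{-\intercal}$ is $C^1$ from $\Theta^2$ into $C^{1,1}(\R^2,\R^{2\times 2})$ (by \cite{simon1980differentiation}), the product is $C^1$ from $\Theta^2$ into $H^1(\Omega_0;\R^2)$. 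Similarly, the outward unit normal on $\Gamma_\theta$ pulled back to $\Gamma_0$ admits the explicit representation
\begin{align}
\nv_\theta\circ(\I+\theta)=\frac{(I+D\theta)^{-\intercal}\nv}{|(I+D\theta)^{-\intercal}\nv|}\qquad\text{on }\Gamma_0,
\end{align}
which, because $\nv\in C^{2,1}(\Gamma_0,\R^2)$ and the normalization is smooth away from zero, is $C^1$ from $\Theta^2$ into $C^{1,1}(\Gamma_0,\R^2)$.

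Multiplying these two factors and restricting to $\Gamma_0$ gives
\begin{align}
z(\theta)\circ(\I+\theta)=\bigl(\nv_\theta\circ(\I+\theta)\bigr)\cdot\bigl((I+D\theta)^{-\intercal}\Grad\Psi^\theta\bigr)\Big|_{\Gamma_0}.
\end{align}
The trace operator $H^1(\Omega_0;\R^2)\to H^{1/2}(\Gamma_0;\R^2)$ is continuous and linear, and pointwise multiplication by a $C^{1,1}(\Gamma_0,\R^2)$ field is continuous on $H^{1/2}(\Gamma_0;\R^2)$, so the map $\theta\mapsto z(\theta)\circ(\I+\theta)$ is $C^1$ from $\Theta^2$ into $H^{1/2}(\Gamma_0)$. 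Differentiating at $\theta=0$ in direction $\Vd\in C^{2,1}(\R^2,\R^2)$ produces $\dot z(\Vd)\in H^{1/2}(\Gamma_0)$, proving the first assertion.

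For well-definedness of $d\sopot$, I restrict to the wall component and use the continuous embedding $H^{1/2}(\Gamma_0^w)\hookrightarrow L^2(\Gamma_0^w)$:
\begin{align}
d\sopot(\Vd)=-\dot z(\Vd)\big|_{\Gamma_0^w}\in L^2(\Gamma_0^w),
\end{align}
and linearity in $\Vd$ is inherited from linearity of the derivative operation. The main technical obstacle is bookkeeping the regularity through the pull-back, especially ensuring that both the pulled-back gradient and the pulled-back normal are $C^1$ in $\theta$ with values in spaces that can be multiplied and traced without loss; the $C^{3,1}$ hypothesis on $\Omega_0$ together with $\Psi(0)\in H^3(\Omega_0)$ from Lemma \ref{ds:lemma:regularity pot} is exactly what makes the trace of $\Grad\Psi^\theta$ land in $H^{1/2}$ rather than in a weaker space.
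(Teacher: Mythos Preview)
Your argument is correct and, in fact, supplies the details that the paper itself omits: the paper's proof simply notes that $\dot{\Psi}(\Vd)\in H^2(\Omega_0)$ and then cites \cite{leithauser2013controllability} for the passage to $\dot{z}(\Vd)\in H^{1/2}(\Gamma_0)$, whereas you carry out that passage explicitly by factoring $z(\theta)\circ(\I+\theta)$ through the pulled-back gradient and the pulled-back normal and tracking $C^1$-dependence in $\theta$. The structure you use---chain rule for $(\Grad\Psi(\theta))\circ(\I+\theta)$, the standard formula for $\nv_\theta\circ(\I+\theta)$, multiplier properties on $H^{1/2}$, and continuity of the trace---is exactly the natural unpacking of the cited step, so the approaches are the same in spirit.

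One minor inaccuracy in your closing remark: the extra regularity $\Psi(0)\in H^3(\Omega_0)$ from Lemma~\ref{ds:lemma:regularity pot} is not what puts the trace of $\Grad\Psi^\theta$ into $H^{1/2}$; already $\Psi^\theta\in H^2(\Omega_0)$ (for every $\theta$, from Lemma~\ref{ds:lemma:existence material pot}) gives $\Grad\Psi^\theta\in H^1(\Omega_0;\R^2)$ and hence a trace in $H^{1/2}(\Gamma_0;\R^2)$. The $H^3$ regularity of $\Psi(0)$ is used only later, for the shape derivative (Definition~\ref{def:elliptic shape derivative} and Lemma~\ref{zua:lemma:normal shape derivative 1}). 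This does not affect the validity of your proof.
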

\begin{proof}
 Let $\Vd \in C^{2,1}(\R^2, \R^2)$. We know from Lemma \ref{ds:lemma:existence material pot} that $\dot{\Psi}(\Vd) \in H^2(\Omega_0)$ which implies the existence of $\dot{z}(\Vd) \in H^{\frac{1}{2}}(\Gamma_0)$ (see \cite{leithauser2013controllability}). For $\Vd \in \V^2$ we have $d \sopot(\Vd) = -\dot{z}(\Vd) \in L^2(\Gamma_0^w)$ and the operator is well-defined.
\end{proof}

\subsection{Existence of the Shape Derivative}
Computing the operator $d \sopot$ in an explicit way can be done via the shape derivative. The existence of the shape derivative can be derived from the existence of the material derivative and the following Lemma gives an explicit form for $\Psi'(\Vd)$.
\begin{lemma}
\label{ds:lemma:pot shape div psi}
 For $\theta \in \Theta^2$ let $\Psi(\theta) \in H^2(\Omega_\theta)$ be the solution of \eqref{problem:ds potential}, then for $\Vd \in C^{2,1}(\R^2, \R^2)$ the shape derivative $\Psi'(\Vd) \in H^2(\Omega_0)$ exists and can be computed as the solution of
 \begin{align}
\label{eq:Potential Flow Shape derivative 1}
 \begin{aligned}
  \Delta \Psi'(\Vd) &= 0 \qquad &&\mbox{in $\Omega_0$}
  \\
  \Psi'(\Vd) & = 0 \qquad &&\mbox{on $\Gamma_0^{in}$}
  \\
  \Psi'(\Vd) & = -(\nv \cdot \Vd) \partial_\nv \Psi(0) \qquad &&\mbox{on $\Gamma_0^w$}.
 \end{aligned}
\end{align}
\end{lemma}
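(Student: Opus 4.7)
The plan is to deduce the existence of $\Psi'(\Vd)$ from the material derivative established in Lemma \ref{ds:lemma:existence material pot} and then identify the boundary value problem it satisfies by differentiating the defining equations of $\Psi(\theta)$ with respect to $\theta$. Appendix \ref{sec:appendix:shape diff} supplies the standard decomposition
\begin{align*}
\Psi'(\Vd) = \dot{\Psi}(\Vd) - \Grad \Psi(0) \cdot \Vd,
\end{align*}
and since $\dot{\Psi}(\Vd) \in H^2(\Omega_0)$ by Lemma \ref{ds:lemma:existence material pot}, $\Psi(0) \in H^3(\Omega_0)$ by Lemma \ref{ds:lemma:regularity pot}, and $\Vd \in C^{2,1}(\R^2,\R^2)$, the right-hand side lies in $H^2(\Omega_0)$. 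This gives $\Psi'(\Vd) \in H^2(\Omega_0)$ with no extra work.

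For the interior equation I would argue that the Laplacian commutes with the shape derivative, so $\Delta \Psi'(\Vd) = (\Delta \Psi(\theta))'(\Vd) = 0$ in $\Omega_0$. Rather than invoking this commutation formally, I would differentiate the pulled-back identity $F(\theta, \Psi(\theta)\circ(\I+\theta) - \gt) = 0$ from the proof of Lemma \ref{ds:lemma:existence material pot} at $\theta=0$ in direction $\Vd$, using the $C^1$-differentiability of $\Delta_\theta$ in $\theta$ asserted in \cite{simon1980differentiation} together with Lemma \ref{ds:lemma:gradf_TV}. After substituting $\dot{\Psi}(\Vd) = \Psi'(\Vd) + \Grad \Psi(0) \cdot \Vd$ and using $\Delta \Psi(0) = 0$ to cancel the contribution from $D_\theta \Delta_\theta|_{\theta=0}$ applied to $\Psi(0)$, the first-order term in $\theta$ reduces exactly to $\Delta \Psi'(\Vd) = 0$ on $\Omega_0$.

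For the boundary values the key observation is that the pulled-back Dirichlet data $\Psi(\theta) \circ (\I+\theta)|_{\Gamma_0} = g$ is independent of $\theta$, hence $\dot{\Psi}(\Vd)|_{\Gamma_0} = 0$ and consequently
\begin{align*}
\Psi'(\Vd)|_{\Gamma_0} = -\Grad \Psi(0) \cdot \Vd|_{\Gamma_0} = -(\tv \cdot \Vd)\,\partial_\tv \Psi(0) - (\nv \cdot \Vd)\,\partial_\nv \Psi(0).
\end{align*}
Specializing to $\Vd \in \V^2$, the first boundary condition follows from $\Vd|_{\Gamma_0^{in}} = 0$, while on $\Gamma_0^w$ the field is purely normal and the hypothesis $\partial_\tv g|_{\Gamma_0^w} = 0$ yields $\partial_\tv \Psi(0)|_{\Gamma_0^w} = 0$, giving exactly the stated right-hand side. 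I expect the main obstacle to be the rigorous justification of commuting $\Delta$ with the shape derivative; staying inside the pulled-back framework of Lemma \ref{ds:lemma:existence material pot}, which keeps all functions on the fixed reference domain $\Omega_0$ and avoids moving traces between $\Gamma_0$ and $\Gamma_\theta$, appears to be the cleanest route around the technicalities of differentiating on moving domains.
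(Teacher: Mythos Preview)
Your proposal is correct and matches the paper's structure: existence and $H^2$-regularity of $\Psi'(\Vd)$ follow from Lemma~\ref{ds:lemma:existence material pot}, Lemma~\ref{ds:lemma:regularity pot}, and Definition~\ref{def:elliptic shape derivative}, after which the boundary value problem is identified. The paper obtains \eqref{eq:Potential Flow Shape derivative 1} by citing \cite[Proposition~3.1]{sokolowski1992introduction} directly, whereas you unpack the same content by differentiating the pulled-back equation and computing the boundary trace explicitly; your remark that the stated boundary conditions actually require the restriction to $\Vd \in \V^2$ (not merely $\Vd \in C^{2,1}$) is correct and slightly sharper than the paper's formulation.
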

\begin{proof}
By Lemma \ref{ds:lemma:regularity pot}, $\Psi(0) \in H^3(\Omega_0)$ and by Lemma \ref{ds:lemma:existence material pot} the material derivative $\dot{\Psi}(\Vd) \in H^2(\Omega_0)$ exists for $\Vd \in C^{2,1}(\R^2, \R^2)$. Then, by Definition \ref{def:elliptic shape derivative} the shape derivative $\Psi'(\Vd) \in H^2(\Omega_0)$ exists. Furthermore, \cite[Proposition 3.1]{sokolowski1992introduction} yields that $\Psi'(\Vd)$ solves \eqref{eq:Potential Flow Shape derivative 1}.
\end{proof}

\begin{lemma}
\label{zua:lemma:normal shape derivative 1}
 For $z(\theta) = \partial_\nv \Psi(\theta)|_{\Gamma_\theta} \in H^\frac{1}{2}(\Gamma_\theta)$, $\theta \in \Theta^k$ the shape derivative $z'(\Vd) \in H^{\frac{1}{2}}(\Gamma_0)$ exists for $\Vd \in C^{2,1}(\R^2, \R^2)$ and it is given on the wall boundaries by
 \begin{align}
  z'(\Vd) = \partial_\nv \Psi'(\Vd)|_{\Gamma_0^w} - \kappa z(0) (\nv \cdot \Vd)
  \qquad \mbox{on $\Gamma_0^w$}.
 \end{align}
\end{lemma}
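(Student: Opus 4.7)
The plan is to split the argument into two parts: establish existence of $z'(\Vd) \in H^{1/2}(\Gamma_0)$ from the material derivative and the boundary regularity of $z(0)$, and then derive the pointwise formula on $\Gamma_0^w$ by writing $z(\theta)$ as a trace of a domain function, applying the Hadamard trace formula, and eliminating the resulting second normal derivative via harmonicity and the hypothesis $\partial_\tv g = 0$.

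For existence, Lemma \ref{ds:lemma:regularity pot} gives $\Psi(0) \in H^3(\Omega_0)$, hence $z(0) = \partial_\nv \Psi(0)|_{\Gamma_0} \in H^{3/2}(\Gamma_0)$ by the trace theorem, and in particular $\nabla_\Gamma z(0) \in H^{1/2}(\Gamma_0)$. The material derivative $\dot z(\Vd) \in H^{1/2}(\Gamma_0)$ is supplied by Lemma \ref{lemma:existence mat bd pot}. The standard relation between shape and material derivative then yields $z'(\Vd) \in H^{1/2}(\Gamma_0)$.

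For the pointwise formula, I would extend $\nv_\theta$ into a tubular neighborhood of $\Gamma_\theta$ via the signed distance function $d_\theta$ of $\Omega_\theta$, setting $\tilde{\nv}_\theta := \nabla d_\theta$ so that $\tilde{\nv}_\theta|_{\Gamma_\theta} = \nv_\theta$. Then $z(\theta) = \bigl(\nabla \Psi(\theta) \cdot \tilde{\nv}_\theta\bigr)\bigl|_{\Gamma_\theta}$ is the trace of a domain function, and the Hadamard trace formula from standard shape calculus (see \cite{sokolowski1992introduction}), combined with the commutation of the shape derivative and the spatial gradient, gives
\begin{align*}
  z'(\Vd) = \nabla \Psi'(\Vd) \cdot \nv + \nabla \Psi(0) \cdot \tilde{\nv}'(\Vd)\bigl|_{\Gamma_0} + (\nv\cdot \Vd)\,\partial_\nv \bigl(\nabla \Psi(0) \cdot \tilde{\nv}_0\bigr)\bigl|_{\Gamma_0}.
\end{align*}
Classical shape calculus shows $\tilde{\nv}'(\Vd)|_{\Gamma_0}$ is tangential, while on $\Gamma_0^w$ the tangential part of $\nabla \Psi(0)$ equals $\partial_\tv g\,\tv = 0$ by hypothesis, so the middle term drops. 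The eikonal identity $|\nabla d_0|^2 \equiv 1$ near $\Gamma_0$ yields $\partial_\nv \tilde{\nv}_0|_{\Gamma_0} = D^2 d_0 \cdot \nv = 0$, reducing the last term to $(\nv\cdot \Vd)\,\partial^2_{\nv\nv}\Psi(0)|_{\Gamma_0^w}$.

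It remains to replace $\partial^2_{\nv\nv}\Psi(0)$ by $-\kappa\, z(0)$ on $\Gamma_0^w$ using harmonicity. In the local $(\nv,\tv)$-frame induced by the signed distance (for which $\Div \nv = \kappa$ and $\Div \tv = 0$ on $\Gamma_0$), the Laplacian decomposes as
\begin{align*}
  0 = \Delta\Psi(0) = \partial^2_{\nv\nv}\Psi(0) + \partial^2_{\tv\tv}\Psi(0) + \kappa\,\partial_\nv \Psi(0) \quad \text{on } \Gamma_0.
\end{align*}
Since $\Psi(0)|_{\Gamma_0^w} = g|_{\Gamma_0^w}$ with $\partial_\tv g = 0$, we have $\partial^2_{\tv\tv}\Psi(0)|_{\Gamma_0^w} = 0$, giving $\partial^2_{\nv\nv}\Psi(0) = -\kappa\, z(0)$ on $\Gamma_0^w$; substitution yields the claimed identity. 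The main obstacle I anticipate is the careful handling of the $\theta$-family $\tilde{\nv}_\theta$ and the verification of the two technical facts that drive the cancellation (tangentiality of $\tilde{\nv}'(\Vd)|_{\Gamma_0}$ and $\partial_\nv \tilde{\nv}_0|_{\Gamma_0}=0$); a cleaner alternative, sufficient for the use made of this lemma in the approximate-controllability proof (where only $\Vd \in \V^2$ appears), is to observe that $\Vd$ is purely normal on $\Gamma_0^w$, so that no tangential-gradient corrections enter at all.
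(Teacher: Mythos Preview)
Your argument is correct but follows a different route from the paper. The paper proves the formula \emph{variationally}: it starts from $0=\int_{\Omega_\theta}\Delta\Psi(\theta)\,\phi\,dx$ for a smooth test function $\phi$ with $\partial_\nv\phi=0$ on $\Gamma_0$ and $\phi=0$ on $\Gamma_0^{in}$, integrates by parts, and differentiates the resulting domain and boundary integrals in $\theta$ via Lemmas~\ref{zua:lemma:diff domain int} and~\ref{zua:lemma:diff boundary int}. The curvature term emerges from the derivative of the boundary integral, and the term $\int_{\Gamma_0}\Grad\Psi(0)\cdot\Grad\phi\,(\nv\cdot\Vd)\,ds$ vanishes on $\Gamma_0^w$ using $\partial_\nv\phi=0$ and $\partial_\tv\Psi(0)=\partial_\tv g=0$; density of the test functions then gives the pointwise identity. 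Your approach is instead \emph{pointwise}: you extend $\nv_\theta$ via the signed distance, apply the trace formula of Lemma~\ref{shape:lemma:boundary shape derivative} directly to $y(\theta)=\Grad\Psi(\theta)\cdot\tilde\nv_\theta$, and eliminate $\partial_{\nv\nv}^2\Psi(0)$ through the Laplacian decomposition in the $(\nv,\tv)$ frame together with $\Delta\Psi(0)=0$ and $\partial_{\tv\tv}^2 g=0$ on $\Gamma_0^w$.

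The trade-off is exactly the one you flag: the paper's variational argument never needs to differentiate the normal field in $\theta$, nor to invoke the eikonal identities $\tilde\nv'(\Vd)|_{\Gamma_0}\perp\nv$ and $\partial_\nv\tilde\nv_0|_{\Gamma_0}=0$, so it is technically lighter and keeps all manipulations at the level of integrals where regularity is easy to track. Your route is more transparent about \emph{why} the curvature appears (it is the $\kappa\,\partial_\nv\Psi(0)$ term in the surface Laplacian), and it generalises more readily to other boundary observables, at the cost of the signed-distance bookkeeping you mention. One small caveat: your closing remark that for $\Vd\in\V^2$ ``no tangential-gradient corrections enter at all'' does not actually shortcut the computation here, since the lemma is stated for arbitrary $\Vd\in C^{2,1}$ and the tangential component of $\Vd$ already drops out of the \emph{shape} derivative by definition; the real simplifications on $\Gamma_0^w$ come from $\partial_\tv g=0$, which you use correctly.
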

\begin{proof}
Let $\Vd \in C^{2,1}(\R^2, \R^2)$. We have shown in Lemma \ref{lemma:existence mat bd pot} that the material derivative $\dot{z}(\Vd) \in H^\frac{1}{2}(\Gamma_0)$ exists. Furthermore, we know that $\Psi(0) \in H^3(\Omega_0)$ and thus $z(0) \in H^\frac{3}{2}(\Gamma_0)$ by the Trace Theorem (see \cite{wloka1987partial}). Then, Definition \ref{def:elliptic bd shape derivative} yields the existence of $z'(\Vd) \in H^\frac{1}{2}(\Gamma_0)$.

Next, we show that the shape derivative has the given form on the wall boundaries. Therefore, let $\Vd \in C^{2,1}(\R^2, \R^2)$ be given. Let $\phi \in C^\infty(\R^2)$ with $\partial_\nv \phi = 0$ on $\Gamma_0$ and $\phi = 0$ on $\Gamma_0^{in}$ be a smooth test function. For $\theta \in \Theta^2$, integration by parts yields
 \begin{align}
  \begin{aligned}
   0 &= \int_{\Omega_\theta} \Delta \Psi(\theta) \phi \dint x
   \\
   &= -\int_{\Omega_\theta} \Grad \Psi(\theta) \cdot \Grad \phi \dint x 
   + \int_{\Gamma_\theta} z(\theta) \phi \dint s.
  \end{aligned}
 \end{align}
 Using Lemma \ref{zua:lemma:diff domain int} and Lemma \ref{zua:lemma:diff boundary int} to differentiate with respect to $\theta$ in direction $\Vd \in C^{2,1}(\R^2, \R^2)$ yields
 \begin{align}
 \label{zua:eq:normal shape derivative eq1}
  \begin{aligned}
   -&\int_{\Omega_0} \Grad \Psi'(\Vd) \cdot \Grad \phi \dint x 
   - \int_{\Gamma_0} \Grad \Psi(0) \cdot \Grad \phi \,(\nv \cdot \Vd) \dint s
   \\
   + &\int_{\Gamma_0} (z'(\Vd) \phi + \kappa z(0) \phi \,(\nv \cdot \Vd)) \dint s = 0.
  \end{aligned}
 \end{align}
 By Lemma \ref{ds:lemma:pot shape div psi} we have $\Delta \Psi'(\Vd) = 0$ in $\Omega_0$ and integration by parts yields
 \begin{align}
  \label{zua:eq:normal shape derivative eq2}
  \int_{\Omega_0} \Grad \Psi'(\Vd) \cdot \Grad \phi \dint x = \int_{\Gamma_0^w} \partial_\nv \Psi'(\Vd) \phi \dint s.
 \end{align}
 On the other hand, it holds
 \begin{align}
  \label{zua:eq:normal shape derivative eq3}
 \begin{aligned}
  &\int_{\Gamma_0} \Grad \Psi(0) \cdot \Grad \phi \,(\nv \cdot \Vd) \dint s
  \\
  =& \int_{\Gamma_0^w} \partial_\nv \Psi(0) \, \partial_\nv \phi \,(\nv \cdot \Vd) \dint s
  + \int_{\Gamma_0^w} \partial_\tv \Psi(0) \, \partial_\tv \phi \,(\nv \cdot \Vd) \dint s
  \\
  =&\, 0.
  \end{aligned}
 \end{align}
 where we have used $\nv \cdot \Vd = 0$ on $\Gamma_0^{in}$ and $\partial_\nv \phi = 0$, $\partial_\tv \Psi(0) = \partial_\tv g = 0$ on $\Gamma_0^w$. Then, plugging \eqref{zua:eq:normal shape derivative eq2} and \eqref{zua:eq:normal shape derivative eq3} into \eqref{zua:eq:normal shape derivative eq1} and using that $\phi$ vanishes on $\Gamma_0^{in}$ yields
 \begin{align}
  \int_{\Gamma_0^w} (-\partial_\nv \Psi'(\Vd) + z'(\Vd) + \kappa z(0) (\nv \cdot \Vd)) \phi \dint s = 0.
 \end{align}
 And since $\phi \in C^\infty(\R^2)$ is arbitrary on $\Gamma_0^w$ and dense in $L^2(\Gamma_0^w)$ we conclude
 \begin{align}
  z'(\Vd) = \partial_\nv \Psi'(\Vd)|_{\Gamma_0^w} - \kappa z(0) (\nv \cdot \Vd).
 \end{align}
\end{proof}

\begin{lemma}
\label{lemma:lin operator pot}
The linearized shape operator $d\sopot$ is well-defined and given by
\begin{align}
\begin{aligned}
 d\sopot: \V^2 &\rightarrow L^2(\Gamma_0^w)
 \\
 \Vd &\mapsto - \partial_\nv \Psi'(\Vd)|_{\Gamma_0^w} + \kappa z(0) (\nv \cdot \Vd),
 \end{aligned}
\end{align}
 where $\Psi'(\Vd)$ is the solution of
\begin{align}
 \begin{aligned}
  \Delta \Psi'(\Vd) &= 0 \qquad &&\mbox{in $\Omega_0$}
  \\
  \Psi'(\Vd) & = 0 \qquad &&\mbox{on $\Gamma_0^{in}$}
  \\
  \Psi'(\Vd) & = -(\nv \cdot \Vd) \partial_\nv \Psi(0) \qquad &&\mbox{on $\Gamma_0^w$}.
 \end{aligned}
\end{align}
\end{lemma}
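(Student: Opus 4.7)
The plan is to stitch together the three pieces already in hand. The well-definedness of $d\sopot$ is precisely the content of Lemma \ref{lemma:existence mat bd pot}, so that part requires nothing new; I would just cite it and record that $d\sopot(\Vd) \in L^2(\Gamma_0^w)$ for $\Vd \in \V^2$.

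The explicit form will follow from combining the identity
\begin{align*}
 d\sopot(\Vd) \;=\; -\dot{z}(\Vd)|_{\Gamma_0^w},
\end{align*}
which was derived just before Lemma \ref{ds:lemma:regularity pot} from Definition \ref{shape:sec:linearized shape operator} and the definition of $\sopot$, with the formula for the shape derivative $z'(\Vd)$ established in Lemma \ref{zua:lemma:normal shape derivative 1}. The bridge between the two is the standard relation between material and boundary shape derivative (Definition \ref{def:elliptic bd shape derivative}), namely
\begin{align*}
 \dot{z}(\Vd) \;=\; z'(\Vd) + \partial_\tv z(0)\,(\tv \cdot \Vd) \qquad \text{on } \Gamma_0.
\end{align*}

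Here I would exploit the essential structural property of the admissible directions: for $\Vd \in \V^2$ we have $\Vd|_{\Gamma_0^w} = v_\nv \nv$, so $\tv \cdot \Vd = 0$ on $\Gamma_0^w$. Consequently $\dot{z}(\Vd) = z'(\Vd)$ on $\Gamma_0^w$, and substituting the expression from Lemma \ref{zua:lemma:normal shape derivative 1} gives
\begin{align*}
 d\sopot(\Vd) \;=\; -z'(\Vd)|_{\Gamma_0^w} \;=\; -\partial_\nv \Psi'(\Vd)|_{\Gamma_0^w} + \kappa z(0)\,(\nv \cdot \Vd),
\end{align*}
with $\Psi'(\Vd) \in H^2(\Omega_0)$ solving the boundary value problem from Lemma \ref{ds:lemma:pot shape div psi}, which is exactly the claimed system.

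I do not anticipate a real obstacle: the heavy lifting (existence of the material derivative via the implicit function theorem, existence of the shape derivative, and its explicit form including the curvature term) has already been carried out in Lemmas \ref{ds:lemma:existence material pot}, \ref{lemma:existence mat bd pot}, \ref{ds:lemma:pot shape div psi}, and \ref{zua:lemma:normal shape derivative 1}. The only point that must be highlighted carefully is the normality $\tv \cdot \Vd = 0$ on $\Gamma_0^w$ built into the space $\V^2$, since this is what allows $\dot{z}$ and $z'$ to be identified on $\Gamma_0^w$ and ultimately makes the linearized operator depend only on the normal component $\nv \cdot \Vd$ of the deformation.
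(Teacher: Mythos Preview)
Your proposal is correct and follows essentially the same route as the paper: cite Lemma \ref{lemma:existence mat bd pot} for well-definedness, then use Definition \ref{def:elliptic bd shape derivative} together with the normality of $\Vd \in \V^2$ on $\Gamma_0^w$ to pass from $-\dot{z}(\Vd)$ to $-z'(\Vd)$, and finally invoke Lemma \ref{zua:lemma:normal shape derivative 1} and Lemma \ref{ds:lemma:pot shape div psi}. Your write-up is in fact a bit more careful about the sign in the relation $\dot{z}(\Vd)=z'(\Vd)+\partial_\tv z(0)(\tv\cdot\Vd)$ and about the reason $\tv\cdot\Vd=0$ on $\Gamma_0^w$, but the argument is the same.
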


\begin{proof}
 We have shown in Lemma \ref{lemma:existence mat bd pot} that the material derivative of $z(\theta)$ exists and thus that the operator is well-defined. Let $\Vd \in \V^2$. Remember that by definition $\Vd$ is normal on $\Gamma_0$. We conclude using Definition \ref{def:elliptic bd shape derivative} and Lemma \ref{zua:lemma:normal shape derivative 1}
 \begin{align}
\label{eq: dspot equal to shape div}
 d\sopot(\Vd) = -\dot{z}(\Vd) = -z'(\Vd) + \partial_\tv z(0) (\tv \cdot \Vd)
 = - \partial_\nv \Psi'(\Vd)|_{\Gamma_0^w} + \kappa z(0) (\nv \cdot \Vd).
\end{align}
\end{proof}

\subsection{Approximate Controllability}
We have derived the linearized potential flow shape operator and use it to show our approximate controllability result. To do this we need the following uniqueness lemma:
\begin{lemma}
\label{zua:lemma:Uniqueness Potential flow}
 Assume that the curvature $\kappa \in C^0(\Gamma_0)$ is nonnegative, i.e.,  $\kappa \geq 0$ on $\Gamma_0^w$. If $\phi \in H^2(\Omega_0)$ solves
 \begin{align}
 \label{zua:eq:Uniqueness Potential flow}
 \begin{aligned}
  \Delta \phi &= 0
  \qquad &&\mbox{in $\Omega_0$}
  \\
  \phi &= 0
  \qquad && \mbox{on $\Gamma_0^{in}$}
  \\
  \partial_\nv \phi + \kappa \phi &= 0
  \qquad && \mbox{on $\Gamma_0^w$}
 \end{aligned}
 \end{align}
 then $\phi = 0$.
\end{lemma}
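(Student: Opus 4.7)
The plan is to use a standard energy argument: test the PDE against $\phi$ itself, integrate by parts, and exploit the sign of $\kappa$ together with the Dirichlet condition on $\Gamma_0^{in}$.

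More concretely, first I would multiply $\Delta \phi = 0$ by $\phi \in H^2(\Omega_0)$ and integrate over $\Omega_0$. Green's first identity gives
\begin{align*}
 0 = \int_{\Omega_0} \Delta \phi \cdot \phi \dint x = -\int_{\Omega_0} |\Grad \phi|^2 \dint x + \int_{\Gamma_0} (\partial_\nv \phi) \phi \dint s.
\end{align*}
The boundary integral splits over $\Gamma_0^{in}$ and $\Gamma_0^w$. On $\Gamma_0^{in}$ the Dirichlet condition $\phi = 0$ eliminates the contribution. On $\Gamma_0^w$ the Robin-type condition $\partial_\nv \phi = -\kappa \phi$ turns the remaining boundary integral into $-\int_{\Gamma_0^w} \kappa \phi^2 \dint s$. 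Combining yields
\begin{align*}
 \int_{\Omega_0} |\Grad \phi|^2 \dint x + \int_{\Gamma_0^w} \kappa \phi^2 \dint s = 0.
\end{align*}

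Since both terms are non-negative by the hypothesis $\kappa \geq 0$, both must vanish. In particular $\Grad \phi = 0$ a.e.\ in $\Omega_0$, so $\phi$ is constant on each connected component of $\Omega_0$. Since $\Omega_0$ is connected (being a bounded domain) and $\phi = 0$ on $\Gamma_0^{in}$, which is a portion of the boundary of positive surface measure, the constant must be zero, hence $\phi \equiv 0$.

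I do not expect any serious obstacle. The only subtle point is the implicit assumption that $\Gamma_0^{in}$ is nonempty (otherwise, if $\kappa \equiv 0$ everywhere on $\Gamma_0^w$, only constants could be ruled out with an additional normalization); this is guaranteed by the flow setup. The regularity $\phi \in H^2(\Omega_0)$ is more than enough to justify the integration by parts and the trace evaluations on both $\Gamma_0^{in}$ and $\Gamma_0^w$.
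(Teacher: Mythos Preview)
Your proposal is correct and follows essentially the same route as the paper: test $\Delta\phi=0$ with $\phi$, integrate by parts, use the Robin condition on $\Gamma_0^w$ and the Dirichlet condition on $\Gamma_0^{in}$ to obtain $\int_{\Omega_0}|\Grad\phi|^2\dint x+\int_{\Gamma_0^w}\kappa\phi^2\dint s=0$, then conclude $\phi$ is constant and hence zero. The paper's version is slightly terser but the argument is identical.
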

\begin{proof}
 Define the space $V := \{ y \in H^1(\Omega_0); y|_{\Gamma_0^{in}} = 0 \}$. Let $\phi \in H^2(\Omega_0)$ solve \eqref{zua:eq:Uniqueness Potential flow}. Testing the equation with $\phi$ yields after integration by parts
 \begin{align}
  \begin{aligned}
   0 &= -\int_{\Omega_0} \Delta \phi \, \phi \dint x
   \\
   &= \int_{\Omega_0} \| \Grad \phi\|^2 \dint x + \int_{\Gamma_0^w} \kappa \phi^2 \dint s.
  \end{aligned}
 \end{align}
Due to $\kappa \ge 0$ this implies $\phi \equiv const$ a.e. in $\Omega_0$ and the Dirichlet condition yields $\phi \equiv 0 $ a.e. in $\Omega_0$.

\end{proof}

Finally, we have everything at hand to show the approximate controllability for $d \sopot$ using an adjoint argument.

\begin{proof}[of Theorem \ref{ds:thm:approximate control pot}]
Define
\begin{align}
 H^\frac{3}{2}_{i=0}(\Gamma_0) = \{\mu \in H^\frac{3}{2}(\Gamma_0); \mu = 0 \mbox{ on $\Gamma_0^{in}$} \}
\end{align}
 and for $\mu \in H^\frac{3}{2}_{i=0}(\Gamma_0)$ let $\phi(\mu) \in H^2(\Omega_0)$ be the unique solution of the adjoint problem
 \begin{align}
 \begin{aligned}
  \Delta \phi(\mu) &= 0
  \qquad &&\mbox{in $\Omega_0$}
  \\
  \phi(\mu) &= \mu
  \qquad && \mbox{on $\Gamma_0$},
 \end{aligned}
 \end{align}
 which has a unique and regular solution (c.f.~\cite{wloka1987partial}).
 For $(\Vd, \mu) \in \V^2 \times H^\frac{3}{2}_{i=0}(\Gamma_0)$ integration by parts yields
 \begin{align}
  \begin{aligned}
   0 &= \int_{\Omega_0} \Delta \Psi'(\Vd)  \phi(\mu) \dint x
   \\
   &= \int_{\Omega_0} \Psi'(\Vd)  \Delta \phi(\mu) \dint x
   + \int_{\Gamma_0^w} \partial_\nv \Psi'(\Vd) \phi(\mu) \dint s
   - \int_{\Gamma_0^w} \Psi'(\Vd) \partial_\nv \phi(\mu) \dint s
  \end{aligned}
 \end{align}
 and therefore
 \begin{align}
  \int_{\Gamma_0^w} \partial_\nv \Psi'(\Vd) \mu \dint s
   = -\int_{\Gamma_0^w} (\nv \cdot \Vd) \partial_\nv \Psi(0) \partial_\nv \phi(\mu) \dint s.
 \end{align}
 
 Now, assume that $\mu \in \im(d\sopot)^\perp \cap H^\frac{3}{2}_{i=0}(\Gamma_0)$, i.e.,
 \begin{align}
  \int_{\Gamma_0^w} d\sopot(\Vd) \mu \dint s = 0
  \qquad \mbox{for all $\Vd \in \V^2$}
 \end{align}
 holds. Then, we conclude
 \begin{align}
  \begin{aligned}
   0 &= \int_{\Gamma_0^w} d\sopot(\Vd) \mu \dint s
   \\
   &= -\int_{\Gamma_0^w} \partial_\nv \Psi'(\Vd) \mu \dint s
   + \int_{\Gamma_0^w} \kappa \partial_\nv \Psi(0) (\nv \cdot \Vd) \mu \dint s
   \\
   &= \int_{\Gamma_0^w} \big(\partial_\nv \phi(\mu) + \kappa \phi(\mu)\big) (\nv \cdot \Vd) \partial_\nv \Psi(0) \dint s.
  \end{aligned}
 \end{align}
 Now, by assumption $\partial_\nv \Psi(0) = -\sopot(0) \neq 0$ a.e. on $\Gamma_0^w$, therefore,
\begin{align}
 \{(\nv \cdot \Vd) (\partial_\nv \Psi(0))|_{\Gamma_0^w}; \Vd \in \V^2 \}
\end{align}
 is dense in $L^2(\Gamma_0^w)$ and we conclude
 \begin{align}
  \partial_\nv \phi(\mu) + \kappa \phi(\mu) = 0
  \qquad \mbox{on $\Gamma_0^w$}.
 \end{align}
 This leads to a problem independent of $\mu$:
 \begin{align}
 \begin{aligned}
  \Delta \phi(\mu) &= 0
  \qquad &&\mbox{in $\Omega_0$}
  \\
  \phi(\mu) &= 0
  \qquad && \mbox{on $\Gamma_0^{in}$}
  \\
  \partial_\nv \phi(\mu) + \kappa \phi(\mu) &= 0
  \qquad && \mbox{on $\Gamma_0^w$}.
 \end{aligned}
 \end{align}
 Lemma \ref{zua:lemma:Uniqueness Potential flow} yields that $\phi(\mu) = 0$ is the only solution which implies $\mu = \phi(\mu)|_{\Gamma_0} = 0$. Then, Lemma \ref{lemma:approximately controllable} yields that $d\sopot$ is approximately controllable.
\end{proof}

Thus, we have shown that the linearized shape operator of this potential flow problem is approximately controllable.

\section{Stokes Flow}
\label{zua:sec:stokes}

We want to continue with an operator based on the Stokes equation, which maps to the wall shear stress. This operator is motivated by our application of designing optimal distributor geometries for polymer spin packs. We want to generate a better understanding on the inverse problem of finding a flow geometry with a certain wall shear stress profile. Especially, we want to explore whether the space of reachable profiles is rather large or small. We show that the operator $d \sostokes$ is controllable in the sense of Theorem \ref{ds:thm:approximate control stokes}. This backs our expectations on the numerics and we can hope to design distributor geometries with a wall shear stress close to the desired target stress.

\subsection{Definition of the Shape Operator and Problem Statement}
Let $\Omega_0 \subset \R^2$ be a bounded domain of class $C^{6,1}$ and let $g \in H^{5+\frac{1}{2}}(\Gamma_0)$ be given with $\partial_\tv g|_{\Gamma_0^w} = 0$. See Remark \ref{remark:high regularity} for a justification of the high regularity requirement. We define the Stokes flow shape operator $\sostokes$ by
\begin{align}
 \begin{aligned}
  \sostokes: \admshape^4 &\rightarrow L^2(\Gamma_0^w)
  \\
  \Omega_\theta &\mapsto (\omega(\theta)|_{\Gamma_\theta^w}) \circ (\I + \theta).
 \end{aligned}
\end{align}
For $\Theta^4$ the stream function $\Psi(\theta)$ and vorticity $\omega(\theta)$ are the solutions of
\begin{align}
\label{problem:ds biharmonic stokes}
\begin{aligned}
 \Delta \Psi(\theta) &= -\omega(\theta) \qquad &&\mbox{in $\Omega_\theta$}
 \\
 \Delta \omega(\theta) &= 0 \qquad &&\mbox{in $\Omega_\theta$}
 \\
 \Psi(\theta) &= g \circ (\I + \theta)^{-1} \qquad &&\mbox{on $\Gamma_\theta$}
 \\
 \partial_\nv \Psi(\theta) &= 0 \qquad &&\mbox{on $\Gamma_\theta$}.
 \end{aligned}
\end{align}

\begin{remark}
The flow velocity is given by
\begin{align}
 \n{u}(\theta) = \begin{pmatrix} \partial_2 \Psi(\theta) \\ -\partial_1 \Psi(\theta) \end{pmatrix}
\end{align}
and $\n{u}(\theta)$ solves Stokes equation (see \cite{anderson1995computational})
\begin{align}
\begin{aligned}
 -\Delta \n{u}(\theta) + \Grad p &= 0
 \qquad &&\mbox{in $\Omega_\theta$}
 \\
 \Div \n{u}(\theta) &= 0
 \qquad &&\mbox{in $\Omega_\theta$}
\end{aligned}
\end{align}
with boundary conditions
\begin{align}
\begin{aligned}
 \tv \cdot \n{u}(\theta) &= -\partial_\nv \Psi(\theta) = 0
  \qquad &&\mbox{on $\Gamma_\theta$}
  \\
  \nv \cdot \n{u}(\theta) &= \partial_\tv \Psi(\theta) = \partial_\tv (g \circ (\I+\theta)^{-1})
  \qquad &&\mbox{on $\Gamma_\theta$}
\end{aligned}
\end{align}
 and especially $\nv \cdot \n{u}(\theta)|_{\Gamma_\theta^w} = 0$ by definition of $g$. Furthermore,  $\sostokes$ maps to the wall shear stress $\sigma(\theta) = \omega(\theta)|_{\Gamma_\theta^w}$.
\end{remark}

We show that the linearized shape operator $d\sostokes$ is well-defined and given by
\begin{align}
\begin{aligned}
 d\sostokes: \V^4 &\rightarrow L^2(\Gamma_0^w)
 \\
 \Vd &\mapsto \omega'(\Vd)|_{\Gamma_0^w} + \partial_\nv \omega(0) (\nv \cdot \Vd).
 \end{aligned}
\end{align}
 where $\Psi'(\Vd)$ and $\omega'(\Vd)$ are the solution of
 \begin{align}
 \label{zua:eq:BiharmonicShapeDerivative}
 \begin{aligned}
  \Delta \Psi'(\Vd) &= -\omega'(\Vd)
  \qquad &&\mbox{in $\Omega_0$}
  \\
  \Delta \omega'(\Vd) &= 0
  \qquad &&\mbox{in $\Omega_0$}
  \\
  \Psi'(\Vd) &= 0
  \qquad &&\mbox{on $\Gamma_0$}
  \\
  \partial_\nv \Psi'(\Vd) &= 0
  \qquad &&\mbox{on $\Gamma_0^{in}$}
  \\
  \partial_\nv \Psi'(\Vd) &= (\nv \cdot \Vd) \omega(0)
  \qquad &&\mbox{on $\Gamma_0^w$}.
 \end{aligned}
 \end{align}
In the rest of this section we establish the existence of $d\sostokes$ and prove the following result about the approximate controllability of the linearized shape operator:
\begin{theorem}
\label{ds:thm:approximate control stokes}
 Let $\Omega_0$ be bounded and of class $C^{6,1}$ and assume that $\sostokes(0) \neq 0$ on $\Gamma_0^w$. Then, the operator $d\sostokes: \V^4 \rightarrow L^2(\Gamma_0^w)/\mathcal{Z}_{\partial_\nv}$ is approximately controllable. Here $\mathcal{Z}_{\partial_\nv} = \{ \partial_\nv \phi|_{\Gamma_0^w} \in L^2(\Gamma_0^w); \phi \in H^4(\Omega_0) \mbox{ solution of \eqref{zua:eq:BiharmonicUniqueness}} \}$ is a finite dimensional subspace of $L^2(\Gamma_0^w)$.
\end{theorem}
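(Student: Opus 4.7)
The plan is to adapt the adjoint argument used for Theorem \ref{ds:thm:approximate control pot} to the biharmonic structure underlying the Stokes stream-function formulation, compensating for the nontrivial kernel $\mathcal{Z}_{\partial_\nv}$ by working in the quotient. By Hahn--Banach (the quotient version of Lemma \ref{lemma:approximately controllable}), approximate controllability of $d\sostokes$ into $L^2(\Gamma_0^w)/\mathcal{Z}_{\partial_\nv}$ is equivalent to the implication: whenever $\mu \in L^2(\Gamma_0^w)$ is orthogonal to $\mathcal{Z}_{\partial_\nv}$ and satisfies $\int_{\Gamma_0^w} d\sostokes(\Vd)\,\mu \dint s = 0$ for every $\Vd \in \V^4$, then $\mu = 0$. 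I will therefore show that any $\mu \in L^2(\Gamma_0^w)$ annihilating $\im d\sostokes$ automatically lies in $\mathcal{Z}_{\partial_\nv}$; combined with the orthogonality condition this forces $\mu = 0$. By approximating $\mu$ in $L^2(\Gamma_0^w)$ by smoother test data (supported away from the junction $\Gamma_0^{in} \cap \Gamma_0^w$) and closing by density, I may assume from the outset that $\mu$ is smooth enough to make the adjoint problem regular.

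Given such $\mu$, the right adjoint is the clamped biharmonic problem: find $\phi = \phi(\mu) \in H^4(\Omega_0)$ solving
\begin{align}
\begin{aligned}
 \Delta^2 \phi &= 0 \qquad &&\text{in } \Omega_0
 \\
 \phi &= 0 \qquad &&\text{on } \Gamma_0
 \\
 \partial_\nv \phi &= 0 \qquad &&\text{on } \Gamma_0^{in}
 \\
 \partial_\nv \phi &= \mu \qquad &&\text{on } \Gamma_0^w.
\end{aligned}
\end{align}
This is well-posed and $H^4$-regular under the $C^{6,1}$ hypothesis on $\Omega_0$ (Lax--Milgram in $H^2_0(\Omega_0)$ after lifting the Neumann data, together with biharmonic elliptic regularity). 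Since both $\Psi'(\Vd)$ and $\phi(\mu)$ are biharmonic in $\Omega_0$, the second biharmonic Green identity gives
\begin{align}
 0 = \int_{\Gamma_0}\Big[(\partial_\nv \Delta \Psi')\phi - (\Delta \Psi')(\partial_\nv \phi) + (\partial_\nv \Psi')(\Delta \phi) - \Psi'(\partial_\nv \Delta \phi)\Big] \dint s.
\end{align}
Inserting the boundary values of $\Psi'(\Vd)$ from \eqref{zua:eq:BiharmonicShapeDerivative} and those of $\phi(\mu)$ above, together with $\Delta \Psi' = -\omega'$, the terms supported on $\Gamma_0$ that contain $\Psi'$ or $\phi$ vanish and so do the contributions of $\partial_\nv \Psi'$ and $\partial_\nv \phi$ on $\Gamma_0^{in}$; the identity collapses to
\begin{align}
 \int_{\Gamma_0^w} \omega'(\Vd)\,\mu \dint s = -\int_{\Gamma_0^w} (\nv \cdot \Vd)\,\omega(0)\,\Delta \phi(\mu) \dint s.
\end{align}

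Combining this with $d\sostokes(\Vd) = \omega'(\Vd)|_{\Gamma_0^w} + \partial_\nv \omega(0)(\nv \cdot \Vd)$, the annihilation assumption becomes
\begin{align}
 \int_{\Gamma_0^w} (\nv \cdot \Vd)\,\big[\partial_\nv \omega(0)\,\mu - \omega(0)\,\Delta \phi(\mu)\big] \dint s = 0 \qquad \text{for all } \Vd \in \V^4.
\end{align}
Since $\{(\nv \cdot \Vd)|_{\Gamma_0^w} : \Vd \in \V^4\}$ is dense in $L^2(\Gamma_0^w)$ and $\omega(0) = \sostokes(0) \neq 0$ on $\Gamma_0^w$ by hypothesis, the bracket vanishes pointwise, yielding the extra boundary condition
\begin{align}
 \omega(0)\,\Delta \phi(\mu) - \partial_\nv \omega(0)\,\partial_\nv \phi(\mu) = 0 \qquad \text{on } \Gamma_0^w.
\end{align}
Together with the clamped biharmonic conditions, this exactly says that $\phi(\mu)$ solves the overdetermined problem \eqref{zua:eq:BiharmonicUniqueness} defining $\mathcal{Z}_{\partial_\nv}$, so $\mu = \partial_\nv \phi(\mu)|_{\Gamma_0^w} \in \mathcal{Z}_{\partial_\nv}$, which closes the implication.

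The main obstacles I expect are twofold. First, well-posedness and $H^4$-regularity of the adjoint biharmonic problem with the mixed Dirichlet/Neumann data, together with the density/approximation step needed to pass from smooth test functions to general $\mu \in L^2(\Gamma_0^w)$, are what forces the $C^{6,1}$ regularity on $\Omega_0$ and will carry most of the technical weight. Second, the finite-dimensionality of $\mathcal{Z}_{\partial_\nv}$ claimed in the statement is not delivered by the adjoint argument itself; it has to be obtained separately by a Fredholm/compactness argument applied to the overdetermined biharmonic operator, viewing the extra Robin-type condition on $\Gamma_0^w$ as a compact perturbation of a well-posed clamped biharmonic problem.
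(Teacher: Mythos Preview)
Your proposal is correct and follows essentially the same route as the paper: the same clamped biharmonic adjoint $\phi(\mu)$, the same Green identity yielding $\int_{\Gamma_0^w}\omega'(\Vd)\mu\,ds = -\int_{\Gamma_0^w}(\nv\cdot\Vd)\,\omega(0)\,\Delta\phi(\mu)\,ds$, the same density argument to extract the Robin-type boundary condition $\Delta\phi + c_{11}\partial_\nv\phi = 0$ with $c_{11} = -\partial_\nv\omega(0)/\omega(0)$, and hence $\mu\in\mathcal{Z}_{\partial_\nv}$. The two obstacles you anticipate are handled in the paper exactly as you predict: the $C^{6,1}$ assumption is used (via Lemma~\ref{ds:lemma:regularity c11}) to guarantee $c_{11}\in C^1$ so that the boundary form is admissible, and the finite-dimensionality of $\mathcal{Z}_{\partial_\nv}$ is obtained separately in Lemma~\ref{zua:lemma:Stokes Uniqueness} via $V$-coercivity of $a(\varphi,\eta)+c(\varphi,\eta)$ and the Fredholm alternative of Theorem~\ref{pre:thm:Fredholm Alternative}.
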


\begin{remark}
\label{remark:high regularity}
 The assumptions of this section include a very high regularity requirement of $C^{6,1}$ for the reference domain $\Omega_0$. For the well-definedness of the operator $\sopot$ itself, $C^{4,1}$ would suffice, because this would provide the existence of the trace of $\omega(\theta)$. It is also true that in many parts of this section the regularity assumptions can be relaxed by applying weak arguments. However, a key part for the final proof is the $V$-coercivity of the bilinear form \eqref{zua:eq:Stokes Uniqueness weak}, which due to \cite{wloka1987partial} does require $c_{11} \in C^1(\Omegab_0)$ for the coefficient of the boundary form. And by definition of that coefficient this requires $C^{6,1}$ for $\Omega_0$ (cf. Lemma \ref{ds:lemma:regularity c11}).
\end{remark}

\subsection{Existence of the Material Derivative}
To prove the well-posedness of the linearized shape operator $d \sostokes$ let us define
\begin{align}
 z(\theta) := \omega(\theta)|_{\Gamma_\theta} = \Delta \Psi(\theta)|_{\Gamma_\theta}.
\end{align}
Again, our first task is to show the existence of the material derivative $\dot{\Psi}(\Vd)$ of the stream function as the solution of the biharmonic problem
\begin{align}
\label{ds:eq:biharmonic stokes 1}
\begin{aligned}
 \Delta \Delta \Psi(\theta) &= 0 \qquad &&\mbox{in $\Omega_\theta$}
 \\
 \Psi(\theta) &= g \circ (\I + \theta)^{-1} \qquad &&\mbox{on $\Gamma_\theta$}
 \\
 \partial_\nv \Psi(\theta) &= 0 \qquad &&\mbox{on $\Gamma_\theta$}.
 \end{aligned}
\end{align}
We start by stating the standard regularity result:
\begin{lemma}
\label{ds:lemma:regularity BHSTO}
 For $\theta \in \Theta^4$ let $\Psi(\theta)$ be the solution of Problem \eqref{ds:eq:biharmonic stokes 1}, then $\Psi(\theta) \in H^4(\Omega_\theta)$. Furthermore, $\Psi(0) \in H^6(\Omega_0)$.
\end{lemma}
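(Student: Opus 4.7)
The plan is to reduce the coupled system \eqref{ds:eq:biharmonic stokes 1} to a single fourth-order boundary-value problem and then invoke the standard Sobolev regularity theory for the biharmonic operator. Eliminating $\omega(\theta) = -\Delta \Psi(\theta)$ yields the clamped-plate problem $\Delta^2 \Psi(\theta) = 0$ in $\Omega_\theta$ with $\Psi(\theta) = g \circ (\I+\theta)^{-1}$ and $\partial_\nv \Psi(\theta) = 0$ on $\Gamma_\theta$, which is a textbook fourth-order elliptic Dirichlet problem (see \cite{wloka1987partial}). The pattern of argument closely mirrors that of Lemma \ref{ds:lemma:regularity pot}; only the index bookkeeping changes.

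For the first assertion I would proceed in two bookkeeping steps. Since $\Omega_0$ is of class $C^{6,1}$ and $\theta \in \Theta^4$ makes $\I + \theta$ a $(4,1)$-diffeomorphism, the transported domain $\Omega_\theta = (\I+\theta)(\Omega_0)$ is of class $C^{4,1}$, which comfortably exceeds the $C^{3,1}$ regularity required by the $H^4$-regularity theorem for the biharmonic Dirichlet problem. Next, composition with a $C^{4,1}$-diffeomorphism preserves $H^s$-regularity for $s \leq 5$, so $g \in H^{11/2}(\Gamma_0)$ pulls back to $g \circ (\I+\theta)^{-1} \in H^{7/2}(\Gamma_\theta)$, which is exactly the regularity needed for the Dirichlet trace, while the Neumann-type datum $0$ lies in every Sobolev space. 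The standard regularity theorem then delivers a unique solution $\Psi(\theta) \in H^4(\Omega_\theta)$.

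The second assertion follows by applying the same theorem at $\theta = 0$ with two additional orders of regularity. No diffeomorphism loss occurs, so $g \in H^{11/2}(\Gamma_0)$ directly and $0 \in H^{9/2}(\Gamma_0)$ serves as Neumann-type datum, while $\Omega_0$ of class $C^{6,1}$ exceeds the $C^{5,1}$ requirement for $H^6$-regularity. Hence $\Psi(0) \in H^6(\Omega_0)$.

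The only mildly technical point is verifying the composition estimate for boundary Sobolev spaces under a $C^{4,1}$-diffeomorphism: the loss of regularity is essentially linear in the smoothness of the map, and $C^{4,1}$ is exactly sufficient to retain the $H^{7/2}$ trace data required to drive the $H^4$ interior regularity. This is a well-known result (see \cite{wloka1987partial}) and I do not anticipate it as a genuine obstacle, though it is the only step where care must be taken to match indices precisely.
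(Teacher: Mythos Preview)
Your proposal is correct and follows essentially the same route as the paper: both invoke the standard existence and regularity theory from \cite{wloka1987partial} for the biharmonic Dirichlet problem after checking that $\Omega_\theta$ is $C^{4,1}$ and that the transported boundary datum retains sufficient Sobolev regularity. One small redundancy: Problem \eqref{ds:eq:biharmonic stokes 1} is already stated in biharmonic form $\Delta\Delta\Psi(\theta)=0$, so your elimination step is unnecessary (you may be thinking of \eqref{problem:ds biharmonic stokes}); also, the paper simply asserts $g\circ(\I+\theta)^{-1}\in H^{11/2}(\Gamma_\theta)$ rather than your more cautious $H^{7/2}$, but either suffices for the $H^4$ conclusion.
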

\begin{proof}
 Let $\theta \in \Theta^4$, then $\Omega_\theta \in C^{4,1}$ and $g \circ (\I+\theta)^{-1} \in H^{5+\frac{1}{2}}(\Gamma_\theta)$. The standard existence and regularity theory (see \cite{wloka1987partial}) implies $\Psi(\theta) \in H^4(\Omega_\theta)$. Furthermore, since $\Omega_0$ is of class $C^{6,1}$ and $g \in H^{5+\frac{1}{2}}(\Gamma_0)$ we have $\Psi(0) \in H^6(\Omega_0)$.
\end{proof}

In the same way as for the Laplace operator (cf. Lemma \ref{ds:lemma:iso pot}) the elliptic existence and regularity theory yields:
\begin{lemma}
\label{ds:lemma:iso biha}
 The biharmonic operator $\Delta \Delta: H^4(\Omega_0) \cap H^2_0(\Omega_0) \rightarrow L^2(\Omega_0)$ is an isomorphism between the given spaces.
\end{lemma}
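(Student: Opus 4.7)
The plan is to mirror the proof of Lemma \ref{ds:lemma:iso pot}: show that the map is linear and bounded (trivial), then establish bijectivity by combining Lax--Milgram with elliptic regularity for the homogeneous Dirichlet problem of the biharmonic equation. Continuity of the inverse then follows automatically from the open mapping theorem, since both $H^4(\Omega_0)\cap H^2_0(\Omega_0)$ (equipped with the $H^4$ norm) and $L^2(\Omega_0)$ are Banach spaces.

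The heart of the argument is surjectivity with regularity. Given $f\in L^2(\Omega_0)$, I would first produce a weak solution $u\in H^2_0(\Omega_0)$ of $\Delta\Delta u=f$ via Lax--Milgram applied to the bilinear form
\begin{align}
a(u,v)=\int_{\Omega_0}\Delta u\,\Delta v\,dx,\qquad u,v\in H^2_0(\Omega_0),
\end{align}
with right-hand side $\ell(v)=\int_{\Omega_0} f v\,dx$. Continuity is immediate, and coercivity on $H^2_0(\Omega_0)$ is the standard fact that $\|\Delta\cdot\|_{L^2(\Omega_0)}$ is equivalent to $\|\cdot\|_{H^2(\Omega_0)}$ on $H^2_0(\Omega_0)$ for bounded domains (which follows by iterating Poincar\'e). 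To upgrade $u$ to $H^4(\Omega_0)$ I would invoke elliptic regularity for the biharmonic operator with Dirichlet boundary conditions (see \cite{wloka1987partial}); the assumption $\Omega_0\in C^{6,1}$ is far more than sufficient to guarantee $H^4$-regularity from an $L^2$ right-hand side.

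Injectivity is the easy direction: if $u\in H^4(\Omega_0)\cap H^2_0(\Omega_0)$ satisfies $\Delta\Delta u=0$, then integrating by parts twice (legitimate because $u=\partial_\nv u=0$ on $\Gamma_0$) gives
\begin{align}
0=\int_{\Omega_0}\Delta\Delta u\cdot u\,dx=\int_{\Omega_0}|\Delta u|^2\,dx,
\end{align}
so $\Delta u=0$ in $\Omega_0$. Combined with $u|_{\Gamma_0}=0$, uniqueness for the Dirichlet problem of the Laplacian forces $u\equiv 0$.

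I do not expect any genuine obstacle here; the only point requiring care is checking that the weak solution obtained from Lax--Milgram lives in the full space $H^4(\Omega_0)\cap H^2_0(\Omega_0)$ rather than merely in $H^2_0(\Omega_0)$, but this is precisely what standard biharmonic regularity theory on smooth domains delivers. The lemma is then essentially a citation of \cite{wloka1987partial}, packaged into the operator-theoretic language needed to feed the implicit function theorem in the upcoming material-derivative argument, in direct analogy to the role played by Lemma \ref{ds:lemma:iso pot} in Section \ref{zua:sec:potential flow}.
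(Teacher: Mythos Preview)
Your proposal is correct and follows the same approach as the paper. In fact the paper does not write out a separate proof for this lemma at all; it simply remarks that the result follows ``in the same way as for the Laplace operator (cf.\ Lemma \ref{ds:lemma:iso pot})'' from standard elliptic existence and regularity theory, citing \cite{wloka1987partial}. Your write-up just makes explicit the Lax--Milgram/regularity steps behind that citation.
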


Again, we use the implicit function theorem to show the existence of the material derivative (cf. \cite{simon1980differentiation, sokolowski1992introduction}).

\begin{lemma}
\label{ds:lemma:material derivative stokes}
 Suppose that for the solution of \eqref{problem:ds biharmonic stokes} fulfills $\Psi(\theta) \in H^4(\Omega_\theta)$ for $\theta \in \Theta^4$. Then, the material derivative $\dot{\Psi}(\Vd) \in H^4(\Omega_0)$ exists for all directions $\Vd \in C^{4,1}(\R^2, \R^2)$.
\end{lemma}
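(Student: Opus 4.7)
The plan is to mirror the implicit function theorem argument used for the potential flow case in Lemma \ref{ds:lemma:existence material pot}, now applied to the biharmonic formulation \eqref{ds:eq:biharmonic stokes 1} together with the regularity and isomorphism provided by Lemma \ref{ds:lemma:regularity BHSTO} and Lemma \ref{ds:lemma:iso biha}. The new difficulty compared to the Laplace case is that Problem \eqref{ds:eq:biharmonic stokes 1} carries two boundary conditions on $\Gamma_\theta$: the Dirichlet condition $\Psi(\theta)=g\circ(\I+\theta)^{-1}$ pulls back cleanly to the $\theta$-independent condition $v|_{\Gamma_0}=g$ for $v=\Psi(\theta)\circ(\I+\theta)$, but the homogeneous Neumann condition $\partial_{\nv}\Psi(\theta)=0$ on $\Gamma_\theta$ pulls back to a genuinely $\theta$-dependent boundary operator acting on $v$. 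Consequently, the natural affine space of candidates for $v-\gt$ cannot be fixed once and for all, and the target space of $F$ must be enlarged so that both boundary data appear as components.

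Concretely, I would first fix an extension $\gt\in H^6(\R^2)$ of the boundary datum with $\gt|_{\Gamma_0}=g$ and $\partial_{\nv}\gt|_{\Gamma_0}=0$, which exists by standard lifting thanks to the regularity of $\Omega_0$ and $g$. Writing $\Delta_\theta$ for the pulled-back Laplacian of Lemma \ref{ds:lemma:gradf_TV} and $B_\theta$ for the pulled-back normal derivative operator defined on $\Gamma_0$ by
\begin{align*}
(B_\theta v)(x) := \bigl[D(\I+\theta)(x)^{-T}\,\n{n}_\theta((\I+\theta)(x))\bigr] \cdot \Grad v(x),
\end{align*}
so that $B_0=\partial_{\nv}$, I would then define
\begin{align*}
F: \Theta^4 \times \bigl(H^4(\Omega_0)\cap H^1_0(\Omega_0)\bigr) &\rightarrow L^2(\Omega_0)\times H^{5/2}(\Gamma_0), \\
(\theta,u) &\mapsto \bigl(\Delta_\theta\Delta_\theta(u+\gt),\; B_\theta(u+\gt)\big|_{\Gamma_0}\bigr).
\end{align*}
Pulling back \eqref{ds:eq:biharmonic stokes 1} via $\I+\theta$ and invoking Lemma \ref{ds:lemma:gradf_TV} gives $F(\theta,\Psi(\theta)\circ(\I+\theta)-\gt)=0$ for every $\theta\in\Theta^4$; at $\theta=0$ the point $u_0:=\Psi(0)-\gt$ lies in $H^4(\Omega_0)\cap H^1_0(\Omega_0)$ by Lemma \ref{ds:lemma:regularity BHSTO}.

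The two hypotheses of the implicit function theorem (see Theorem \ref{thm:implicit function}) are then verified as follows. First, $F\in C^1$: smoothness of $\theta\mapsto\Delta_\theta$ is provided by \cite[(1.3)]{simon1980differentiation} and passes to $\Delta_\theta\Delta_\theta$ by the product rule, while $\theta\mapsto B_\theta$ is $C^1$ as a composition of inversion and matrix entries of $D(\I+\theta)$ with the normal field on $\Gamma_\theta$. Second, the partial derivative $D_2F(0,u_0)$ acts as $h\mapsto(\Delta^2 h,\partial_{\nv}h|_{\Gamma_0})$, which is a topological isomorphism from $H^4(\Omega_0)\cap H^1_0(\Omega_0)$ onto $L^2(\Omega_0)\times H^{5/2}(\Gamma_0)$: surjectivity follows by lifting the prescribed Neumann datum to an $H^4$ function with zero trace and then applying the biharmonic Dirichlet isomorphism of Lemma \ref{ds:lemma:iso biha}, and injectivity is the same lemma in disguise. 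The implicit function theorem then yields a unique $C^1$ map $\mathcal{G}:\Theta^4\to H^4(\Omega_0)\cap H^1_0(\Omega_0)$ with $\mathcal{G}(\theta)=\Psi(\theta)\circ(\I+\theta)-\gt$ locally around $0$; differentiating at $\theta=0$ in direction $\Vd\in C^{4,1}(\R^2,\R^2)$ produces the material derivative $\dot\Psi(\Vd)\in H^4(\Omega_0)$.

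The step I expect to require the most care is the $C^1$ dependence of the pulled-back Neumann operator $B_\theta$ as a bounded map $H^4(\Omega_0)\to H^{5/2}(\Gamma_0)$, since it mixes the $\theta$-dependent matrix field $D(\I+\theta)^{-T}$ with the unit normal to the moving boundary $\Gamma_\theta$; this is precisely where the regularity assumption $\Omega_0\in C^{6,1}$ together with $\theta\in\Theta^4$ pays off and allows both traces and derivatives to be controlled in the required function spaces.
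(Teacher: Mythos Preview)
Your proposal is correct and takes a genuinely different route from the paper. The paper adapts the argument of Lemma~\ref{ds:lemma:existence material pot} verbatim: it works on the fixed space $H^4(\Omega_0)\cap H^2_0(\Omega_0)$, defines $F(\theta,u)=\Delta_\theta\Delta_\theta(u+\gt)$ with values only in $L^2(\Omega_0)$, and asserts that $\Psi(\theta)\circ(\I+\theta)-\gt\in H^2_0(\Omega_0)$, so that the $C^1$ map produced by the implicit function theorem coincides with $\theta\mapsto\Psi(\theta)\circ(\I+\theta)-\gt$. You instead keep only the Dirichlet trace in the domain $H^4(\Omega_0)\cap H^1_0(\Omega_0)$ and carry the Neumann condition as a second target component in $H^{5/2}(\Gamma_0)$ via the pulled-back operator $B_\theta$. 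The paper's route is shorter and reuses Lemma~\ref{ds:lemma:iso biha} directly, but the membership in $H^2_0$ is precisely the point you raise: while $\partial_{\nv}\bigl(\Psi(\theta)\circ(\I+\theta)\bigr)=0$ does hold on $\Gamma_0^w$ (there $\Grad\Psi(\theta)|_{\Gamma_\theta^w}=0$ because $\partial_\tv g|_{\Gamma_0^w}=0$ and $\partial_{\nv}\Psi(\theta)=0$), on $\Gamma_0^{in}$ it would require $D(\I+\theta)\nv$ to be normal to $\Gamma_\theta$, which the paper does not address. Your formulation sidesteps this entirely at the cost of two additional verifications---$C^1$ dependence of $\theta\mapsto B_\theta$ and the enlarged isomorphism property of $h\mapsto(\Delta^2 h,\partial_\nv h|_{\Gamma_0})$---both of which you outline correctly and which are routine under the stated regularity.
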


\begin{proof}
 Let $\gt \in H^6(\Omega_0)$ be an extension with $\gt|_{\Gamma_0} = g$ and $\partial_\nv \gt|_{\Gamma_0} = 0$. Define the function
 \begin{align}
 \begin{aligned}
  F: \Theta^4 \times H^4(\Omega_0) \cap H_0^2(\Omega_0) &\rightarrow L^2(\Omega_0)
  \\
  (\theta, u) &\mapsto \Delta_\theta \Delta_\theta u + \Delta_\theta \Delta_\theta \gt.
 \end{aligned}
 \end{align}
 
 Let $\theta \in \Theta^4$. Then, it holds
 \begin{align}
  \Delta \Delta \Psi(\theta) = 0 \qquad \mbox{in $\Omega_\theta = (\I + \theta)(\Omega_0)$}
 \end{align}
 and thus
 \begin{align}
  (\Delta \Delta \Psi(\theta)) \circ (\I + \theta) = 0 \qquad \mbox{in $\Omega_0$}.
 \end{align}
 Using Lemma \ref{ds:lemma:gradf_TV} this implies
 \begin{align}
  \Delta_\theta \Delta_\theta (\Psi(\theta) \circ (\I + \theta)) = 0  \qquad \mbox{in $\Omega_0$},
 \end{align}
 where $\Psi(\theta) \circ (\I + \theta) - \gt \in H^4(\Omega_0) \cap H^2_0(\Omega_0)$ and thus
 \begin{align}
 \label{ds:eq:material derivative stokes eq1}
  F(\theta, \Psi(\theta) \circ (\I + \theta) - \gt) = 0.
 \end{align}
 
 Let $u_0 := \Psi(0) -\gt \in H^4(\Omega_0) \cap H^2_0(\Omega_0)$. Then, $F(0, u_0) = 0$ and
 \begin{align}
  D_2 F(0, u_0) = \Delta \Delta: H^4(\Omega_0) \cap H^2_0(\Omega_0) \rightarrow L^2(\Omega)
 \end{align}
 is an isomorphism by Lemma \ref{ds:lemma:iso biha}. Furthermore, from \cite[(1.3)]{simon1980differentiation} we conclude that the operator $F$ is differentiable, i.e.,
 \begin{align}
  F \in C^1(\Theta^4 \times H^4(\Omega_0) \cap H^2_0(\Omega_0), L^2(\Omega_0)).
 \end{align}
 
 Then, because of the Implicit Function Theorem \ref{thm:implicit function} there exists a unique $\mathcal{G} \in C^1(\Theta^4, H^4(\Omega_0) \cap H^2_0(\Omega_0))$ and Equation \eqref{ds:eq:material derivative stokes eq1} implies
 \begin{align}
  \mathcal{G}(\theta) = \Psi(\theta) \circ (\I + \theta) - \gt
 \end{align}
 for $\theta \in \Theta^4$. Then,
 \begin{align}
  \Psi(\theta) \circ (\I + \theta) = \mathcal{G}(\theta) + \gt
 \end{align}
 is differentiable with respect to $\theta$ at $\theta=0$ where the derivative lies in $H^4(\Omega_0)$. Thus, the material derivative $\dot{\Psi}(\Vd) \in H^4(\Omega_0)$ exists for $\Vd \in C^{4,1}(\R^2, \R^2)$.
\end{proof}

Now where we have established the existence of $\dot{\Psi}(\Vd)$, the existence of $\dot{\omega}(\Vd)$ and $\dot{z}(\Vd)$, with $z(\theta) = \omega(\theta)|_{\Gamma_\theta}$ follow directly:
\begin{lemma}
\label{lemma:ds stokes bd mat existence}
 The material derivative $\dot{\omega}(\Vd) \in H^2(\Omega_0)$ exists for $\Vd \in C^{4,1}(\R^2, \R^2)$. Let $z(\theta) = \omega(\theta)|_{\Gamma_\theta}$ for $\theta \in \Theta^4$. Then, the material derivative $\dot{z}(\Vd) \in H^\frac{3}{2}(\Gamma_0)$ exists for $\Vd \in C^{4,1}(\R^2, \R^2)$. Thus the operator $d \sostokes$ is well-defined.
\end{lemma}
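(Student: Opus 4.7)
The plan is to mirror the strategy used for the potential flow case in Lemma \ref{lemma:existence mat bd pot}: deduce the existence of the material derivative of $\omega(\theta)$ from the one of $\Psi(\theta)$ (which is provided by Lemma \ref{ds:lemma:material derivative stokes}), and then descend to the boundary via the trace theorem.

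First, I would use the relation $\omega(\theta) = -\Delta \Psi(\theta)$ on $\Omega_\theta$ together with Lemma \ref{ds:lemma:gradf_TV} to pull everything back to the reference domain:
\begin{align}
\omega(\theta) \circ (\I + \theta) = -\Delta_\theta\bigl(\Psi(\theta) \circ (\I+\theta)\bigr) \qquad \text{in }\Omega_0.
\end{align}
By Lemma \ref{ds:lemma:material derivative stokes}, the map $\theta \mapsto \Psi(\theta)\circ(\I+\theta)$ lies in $C^1(\Theta^4, H^4(\Omega_0))$, and by \cite[(1.3)]{simon1980differentiation} the map $\theta \mapsto \Delta_\theta$ is $C^1$ as a family of second-order operators with $C^{2,1}$-coefficients, in particular as a $C^1$ map from $\Theta^4$ into the continuous linear operators $H^4(\Omega_0) \to H^2(\Omega_0)$. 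Applying the product/chain rule for $C^1$ maps between Banach spaces, the right-hand side is a $C^1$ map from $\Theta^4$ into $H^2(\Omega_0)$. Differentiating at $\theta=0$ in direction $\Vd \in C^{4,1}(\R^2,\R^2)$ therefore gives the existence of $\dot{\omega}(\Vd) \in H^2(\Omega_0)$.

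Next, for $z(\theta) = \omega(\theta)|_{\Gamma_\theta}$ I would first rewrite $z(\theta)\circ(\I+\theta) = \bigl(\omega(\theta)\circ(\I+\theta)\bigr)\big|_{\Gamma_0}$ and then invoke the Trace Theorem (see \cite{wloka1987partial}), which provides a continuous linear operator $H^2(\Omega_0) \to H^{\frac{3}{2}}(\Gamma_0)$. Composition with the $C^1$ map just constructed yields a $C^1$ map $\Theta^4 \to H^{\frac{3}{2}}(\Gamma_0)$, and its derivative at $\theta=0$ in direction $\Vd$ is exactly the material derivative $\dot{z}(\Vd) \in H^{\frac{3}{2}}(\Gamma_0)$.

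Finally, to conclude the well-definedness of $d\sostokes$, I would observe that for $\Vd \in \V^4 \subset C^{4,1}(\R^2,\R^2)$ the definition of the linearized shape operator (Definition \ref{shape:sec:linearized shape operator}) together with the explicit form of $\sostokes$ gives $d\sostokes(\Vd) = \dot{z}(\Vd)|_{\Gamma_0^w}$, which by the preceding step lies in $H^{\frac{3}{2}}(\Gamma_0^w) \hookrightarrow L^2(\Gamma_0^w)$, matching the declared target space. The only genuinely delicate point in this argument is making rigorous the differentiability of the map $\theta \mapsto \Delta_\theta(\Psi(\theta)\circ(\I+\theta))$, i.e.\ controlling the interaction between the $\theta$-dependence of the coefficients of $\Delta_\theta$ and that of its argument; this is handled cleanly by the product rule provided both factors are viewed in the correct Banach spaces (coefficients in $C^{2,1}$, argument in $H^4$, output in $H^2$), which is why the higher regularity assumptions ($\Omega_0$ of class $C^{6,1}$ and $g \in H^{5+\frac{1}{2}}(\Gamma_0)$) from Lemma \ref{ds:lemma:regularity BHSTO} are needed.
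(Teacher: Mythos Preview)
Your proposal is correct and follows essentially the same route as the paper. The paper's own proof is a three-line sketch: it cites Lemma~\ref{ds:lemma:material derivative stokes} for $\dot{\Psi}(\Vd)\in H^4(\Omega_0)$, asserts without further comment that this implies $\dot{\omega}(\Vd)\in H^2(\Omega_0)$, and then invokes Lemma~\ref{shape:lemma:boundary material derivative} to pass to $\dot{z}(\Vd)\in H^{3/2}(\Gamma_0)$. Your argument supplies the missing justification for the middle step (via the pull-back identity and the product rule for $\theta\mapsto\Delta_\theta$ acting on $\theta\mapsto\Psi(\theta)\circ(\I+\theta)$) and replaces the appeal to Lemma~\ref{shape:lemma:boundary material derivative} by a direct use of the trace operator, which is precisely what that lemma encodes.
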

\begin{proof}
 Let $\Vd \in C^{4,1}(\R^2, \R^2)$. By Lemma \ref{ds:lemma:material derivative stokes}, $\dot{\Psi}(\Vd) \in H^4(\Omega_0)$ which implies $\dot{\omega}(\Vd) \in H^2(\Omega_0)$ and thus $\dot{z}(\Vd) \in H^\frac{3}{2}(\Gamma_0)$ by Lemma \ref{shape:lemma:boundary material derivative}.
\end{proof}

\subsection{Existence of the Shape Derivative}
Since we have shown the existence of the material derivatives we get the following result for the shape derivatives.
\begin{lemma}
For $\Vd \in C^{4,1}(\R^2, \R^2)$ the shape derivatives $\Psi'(\Vd) \in H^4(\Omega_0)$ and $\omega'(\Vd) \in H^2(\Omega_0)$ exist. Furthermore, for $\Vd \in \V^4$ it is given as the solution of
 \begin{align}
 \begin{aligned}
  \Delta \Psi'(\Vd) &= -\omega'(\Vd)
  \qquad &&\mbox{in $\Omega_0$}
  \\
  \Delta \omega'(\Vd) &= 0
  \qquad &&\mbox{in $\Omega_0$}
  \\
  \Psi'(\Vd) &= 0
  \qquad &&\mbox{on $\Gamma_0$}
  \\
  \partial_\nv \Psi'(\Vd) &= 0
  \qquad &&\mbox{on $\Gamma_0^{in}$}
  \\
  \partial_\nv \Psi'(\Vd) &= (\nv \cdot \Vd) \omega(0)
  \qquad &&\mbox{on $\Gamma_0^w$}.
 \end{aligned}
 \end{align}
\end{lemma}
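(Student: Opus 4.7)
The plan is to deduce the shape derivatives from the material derivatives already established, then to derive the boundary value problem by applying shape calculus to the system \eqref{problem:ds biharmonic stokes}. First, by Lemma \ref{ds:lemma:material derivative stokes} and Lemma \ref{lemma:ds stokes bd mat existence} the material derivatives $\dot{\Psi}(\Vd) \in H^4(\Omega_0)$ and $\dot{\omega}(\Vd) \in H^2(\Omega_0)$ exist. Combined with Lemma \ref{ds:lemma:regularity BHSTO} giving $\Psi(0) \in H^6(\Omega_0)$ and hence $\omega(0) = -\Delta \Psi(0) \in H^4(\Omega_0)$, Definition \ref{def:elliptic shape derivative} and the standard identity $\Psi'(\Vd) = \dot{\Psi}(\Vd) - \Grad \Psi(0) \cdot \Vd$ (resp.\ for $\omega$) show that $\Grad \Psi(0) \cdot \Vd \in H^5(\Omega_0)$ and $\Grad \omega(0) \cdot \Vd \in H^3(\Omega_0)$ for $\Vd \in C^{4,1}(\R^2,\R^2)$, giving $\Psi'(\Vd) \in H^4(\Omega_0)$ and $\omega'(\Vd) \in H^2(\Omega_0)$.

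The interior equations follow immediately since the shape derivative commutes with $\Delta$ on the interior: from $\Delta \Psi(\theta) = -\omega(\theta)$ in $\Omega_\theta$ and $\Delta \omega(\theta) = 0$ in $\Omega_\theta$ we obtain $\Delta \Psi'(\Vd) = -\omega'(\Vd)$ and $\Delta \omega'(\Vd) = 0$ in $\Omega_0$. For the Dirichlet condition, pulling back the boundary equation gives $\Psi(\theta) \circ (\I+\theta) = g$ on $\Gamma_0$, so $\dot{\Psi}(\Vd)|_{\Gamma_0} = 0$, and hence $\Psi'(\Vd)|_{\Gamma_0} = -\Grad \Psi(0) \cdot \Vd$. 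For $\Vd \in \V^4$, on $\Gamma_0^{in}$ we have $\Vd = 0$, while on $\Gamma_0^w$ we have $\Vd = (\nv \cdot \Vd)\nv$ together with $\partial_\nv \Psi(0) = 0$; in both cases $\Grad \Psi(0) \cdot \Vd = 0$, so $\Psi'(\Vd) = 0$ on all of $\Gamma_0$.

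The main obstacle is the normal-derivative condition, for which I would use the standard boundary shape-derivative formula applied to the identity $\partial_\nv \Psi(\theta) = 0$ on $\Gamma_\theta$: writing $\partial_\nv \Psi$ as the trace of a function defined in a neighbourhood, taking the shape derivative yields
\begin{equation*}
\partial_\nv \Psi'(\Vd) + D^2 \Psi(0)(\nv,\nv)\,(\nv \cdot \Vd) = 0 \qquad \text{on } \Gamma_0.
\end{equation*}
To identify $D^2\Psi(0)(\nv,\nv)$ on $\Gamma_0^w$, I would use the tangential/normal decomposition $\Delta \Psi(0) = D^2\Psi(0)(\nv,\nv) + \partial_\tv^2 \Psi(0) + \kappa\,\partial_\nv \Psi(0)$. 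On $\Gamma_0^w$ we have $\partial_\nv \Psi(0)=0$ and, because $\partial_\tv g|_{\Gamma_0^w} = 0$, also $\partial_\tv \Psi(0) = \partial_\tv g = 0$ so that $\partial_\tv^2 \Psi(0) = 0$; hence $D^2\Psi(0)(\nv,\nv) = \Delta \Psi(0) = -\omega(0)$ on $\Gamma_0^w$. Substituting yields $\partial_\nv \Psi'(\Vd) = (\nv \cdot \Vd)\,\omega(0)$ on $\Gamma_0^w$, and on $\Gamma_0^{in}$ the condition $\Vd = 0$ gives directly $\partial_\nv \Psi'(\Vd) = 0$, which completes the derivation of \eqref{zua:eq:BiharmonicShapeDerivative}.
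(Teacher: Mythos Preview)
Your overall strategy matches the paper's: existence of $\Psi'(\Vd),\omega'(\Vd)$ from the material derivatives and the extra regularity of $\Psi(0)$, interior equations by commuting the shape derivative with $\Delta$, and the Dirichlet condition via $\dot\Psi(\Vd)|_{\Gamma_0}=0$ together with $\partial_\nv\Psi(0)=0$. These parts are fine and essentially identical to the paper's argument (the paper cites \cite[Proposition~3.1]{sokolowski1992introduction} for the interior equations, but your direct reasoning is equivalent).

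The gap is in your derivation of the Neumann condition. When you differentiate $\partial_{\nv}\Psi(\theta)=0$ on $\Gamma_\theta$, the unit normal $\nv=\nv_\theta$ itself depends on $\theta$; its shape derivative is $\nv'(\Vd)=-\partial_\tv(\nv\cdot\Vd)\,\tv$. Your displayed identity
\[
\partial_\nv\Psi'(\Vd)+D^2\Psi(0)(\nv,\nv)\,(\nv\cdot\Vd)=0\qquad\text{on }\Gamma_0
\]
effectively treats $\nv$ as fixed: ``writing $\partial_\nv\Psi$ as the trace of a function in a neighbourhood'' only yields this formula if the extended normal agrees with $\nv_\theta$ on $\Gamma_\theta$, which it does not for $\theta\neq 0$. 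The correct relation carries the extra contribution $\nv'\cdot\Grad\Psi(0)=-\partial_\tv(\nv\cdot\Vd)\,\partial_\tv\Psi(0)$. This is exactly how the paper proceeds: it invokes \cite[(3.12)]{sokolowski1992introduction} to get
\[
\partial_\nv\Psi'(\Vd)=\partial_\tv\big((\nv\cdot\Vd)\,\partial_\tv\Psi(0)\big)+(\nv\cdot\Vd)\,\omega(0),
\]
and only then argues that the first term vanishes (since $\partial_\tv\Psi(0)=\partial_\tv g=0$ on $\Gamma_0^w$ and $\Vd=0$ on $\Gamma_0^{in}$). Your Hessian decomposition $\Delta\Psi(0)=D^2\Psi(0)(\nv,\nv)+\partial_\tv^2\Psi(0)+\kappa\,\partial_\nv\Psi(0)$ is a clean way to identify the remaining term with $-\omega(0)$, but you must either include the normal--variation term and then kill it using $\partial_\tv\Psi(0)|_{\Gamma_0^w}=0$ and $\Vd|_{\Gamma_0^{in}}=0$, or restrict the displayed identity to $\Gamma_0^w$ from the start and justify it there. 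As written, the justification of the intermediate step is incomplete, even though the final boundary conditions you obtain are correct.
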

\begin{proof}
Let $\Vd \in C^{4,1}(\R^2, \R^2)$. We have shown that $(\dot{\Psi}(\Vd), \dot{\omega}(\Vd)) \in H^4(\Omega_0) \times H^2(\Omega_0)$ exists and that $(\Psi(0), \omega(0)) \in H^6(\Omega_0) \times H^4(\Omega_0)$ by Lemma \ref{ds:lemma:regularity BHSTO}. Therefore, by definition the shape derivative $(\Psi'(\Vd), \omega'(\Vd)) \in H^4(\Omega_0) \times H^2(\Omega_0)$ exists.

 Now, let $\Vd \in \V^4$. Then from \cite[Proposition 3.1]{sokolowski1992introduction} we conclude
 \begin{align}
  \Delta \Psi'(\Vd) = -\omega'(\Vd)
  \qquad \mbox{in $\Omega_0$}
 \end{align}
 and
 \begin{align}
  \Delta \omega'(\Vd) = 0
  \qquad \mbox{in $\Omega_0$}.
 \end{align}
 For $\theta \in \Theta^4$ we have $\Psi(\theta) \circ (\I + \theta) = g$ on $\Gamma_0$ and thus by definition of the material derivative
 \begin{align}
  \dot{\Psi}(\Vd)|_{\Gamma_0} = 0.
 \end{align}
 Then,
 \begin{align}
 \begin{aligned}
  \Psi'(\Vd)|_{\Gamma_0} &= \dot{\Psi}(\Vd)|_{\Gamma_0} - (\Grad \Psi(0) \cdot \Vd)|_{\Gamma_0}
  \\
  &= - (\partial_\nv \Psi(0) (\nv \cdot \Vd))|_{\Gamma_0}
  \\
  &= 0,
  \end{aligned}
 \end{align}
 because $\Vd \in \V^4$ is normal and $\partial_\nv \Psi(0) = 0$ on $\Gamma_0$. Finally, we deduce from \cite[(3.12)]{sokolowski1992introduction}
 \begin{align}
  \partial_\nv \Psi'(\Vd) = \partial_\tv ((\nv \cdot \Vd) \partial_\tv \Psi(0)) + (\nv \cdot \Vd) \omega(0).
 \end{align}
 Then, $\partial_\tv ((\nv \cdot \Vd) \partial_\tv \Psi(0))$ vanishes because $\Vd = 0$ on $\Gamma_0^{in}$ and $\partial_\tv \Psi(0) = 0$ on $\Gamma_0^w$. We get
 \begin{align}
 \begin{aligned}
 \partial_\nv \Psi'(\Vd) &= 0
  \qquad &&\mbox{on $\Gamma_0^{in}$}
  \\
  \partial_\nv \Psi'(\Vd) &= (\nv \cdot \Vd) \omega(0)
  \qquad &&\mbox{on $\Gamma_0^w$}.
 \end{aligned}
 \end{align}
\end{proof}

\begin{lemma}
\label{lemma:ds stokes bd shape existence} 
 Let $z(\theta) = \omega(\theta)|_{\Gamma_\theta}$ for $\theta \in \Theta^4$. Then, the shape derivative $z'(\Vd) \in H^\frac{1}{2}(\Gamma_0)$ exists for $\Vd \in C^4(\R^2, \R^2)$ and is given by
 \begin{align}
  z'(\Vd) = \omega'(\Vd)|_{\Gamma_0} + \partial_\nv \omega(0) (\nv \cdot \Vd).
 \end{align}
\end{lemma}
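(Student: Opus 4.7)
The plan is to transcribe the argument of Lemma \ref{zua:lemma:normal shape derivative 1} into the simpler setting where $z(\theta)$ is a pure trace of a domain function, so that (in contrast to the potential-flow case) no curvature contribution appears. Existence will be immediate from the material-derivative result already in place; the formula follows from the definition of the boundary shape derivative combined with a tangent/normal decomposition of $\Vd$ on $\Gamma_0$.

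For existence, I would first note that Lemma \ref{ds:lemma:regularity BHSTO} gives $\Psi(0) \in H^6(\Omega_0)$, so $\omega(0) = -\Delta\Psi(0) \in H^4(\Omega_0)$ and hence the trace satisfies $z(0) \in H^{7/2}(\Gamma_0)$. Together with $\dot{\omega}(\Vd) \in H^2(\Omega_0)$ from Lemma \ref{lemma:ds stokes bd mat existence}, the preceding lemma yields the domain shape derivative $\omega'(\Vd) \in H^2(\Omega_0)$, whose trace lies in $H^{3/2}(\Gamma_0)$. Since $\partial_\nv \omega(0) \in H^{5/2}(\Gamma_0)$ and $(\nv \cdot \Vd)|_{\Gamma_0}$ is sufficiently regular, both terms on the right-hand side of the claimed formula lie in $H^{3/2}(\Gamma_0) \hookrightarrow H^{1/2}(\Gamma_0)$, which will give the asserted regularity of $z'(\Vd)$.

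For the formula, by Definition \ref{def:elliptic bd shape derivative} the boundary shape derivative is characterized by
$$z'(\Vd) = \dot{z}(\Vd) - \partial_\tv z(0)(\tv \cdot \Vd).$$
Because $z(\theta)$ is literally the trace of the domain function $\omega(\theta)$, the boundary material derivative coincides with the trace of the domain material derivative, $\dot{z}(\Vd) = \dot{\omega}(\Vd)|_{\Gamma_0}$. Combining this with the domain identity $\dot{\omega}(\Vd) = \omega'(\Vd) + \Grad \omega(0) \cdot \Vd$ and the splitting $\Vd|_{\Gamma_0} = (\nv \cdot \Vd)\nv + (\tv \cdot \Vd)\tv$ yields
$$\dot{z}(\Vd) = \omega'(\Vd)|_{\Gamma_0} + \partial_\nv \omega(0)(\nv \cdot \Vd) + \partial_\tv \omega(0)(\tv \cdot \Vd).$$
Since $\partial_\tv z(0) = \partial_\tv \omega(0)|_{\Gamma_0}$, subtracting $\partial_\tv z(0)(\tv \cdot \Vd)$ cancels the tangential contribution and produces exactly the claimed identity $z'(\Vd) = \omega'(\Vd)|_{\Gamma_0} + \partial_\nv \omega(0)(\nv \cdot \Vd)$.

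The only subtle point is making sure that the convention of Definition \ref{def:elliptic bd shape derivative} really is the one I am invoking — namely, that the boundary shape derivative subtracts the tangential drift from the material derivative — but this is standard and is already implicitly used in the potential-flow argument of Lemma \ref{zua:lemma:normal shape derivative 1}. Beyond that, the proof is a clean bookkeeping exercise, and is in fact easier than the potential-flow case precisely because $z(\theta)$ does not involve the moving normal field $\nv_\theta$, eliminating the curvature term.
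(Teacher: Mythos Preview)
Your argument is correct and is essentially the same as the paper's, just more explicit: the paper's entire proof is the single line ``This is a direct consequence of Lemma~\ref{shape:lemma:boundary shape derivative},'' and what you have written is precisely an unpacking of that lemma (material derivative on the boundary equals trace of the domain material derivative, then subtract the tangential drift). There is no need to mimic the more involved argument of Lemma~\ref{zua:lemma:normal shape derivative 1}; since $z(\theta)$ is a plain trace rather than a normal derivative, Lemma~\ref{shape:lemma:boundary shape derivative} applies directly.
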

\begin{proof}
 This is a direct consequence of Lemma \ref{shape:lemma:boundary shape derivative}.
\end{proof}

\begin{lemma}
The linearized shape operator $d\sostokes$ is well-defined and given by
\begin{align}
\begin{aligned}
 d\sostokes: \V^4 &\rightarrow L^2(\Gamma_0^w)
 \\
 \Vd &\mapsto \omega'(\Vd)|_{\Gamma_0^w} + \partial_\nv \omega(0) (\nv \cdot \Vd).
 \end{aligned}
\end{align}
\end{lemma}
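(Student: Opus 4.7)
The plan is to assemble the previously established results, exactly mirroring the proof of Lemma \ref{lemma:lin operator pot} in the potential-flow case. First I would unfold Definition \ref{shape:sec:linearized shape operator} for $\sostokes$: writing $z(\theta) = \omega(\theta)|_{\Gamma_\theta}$, one has for $\Vd \in \V^4$
\begin{align}
d\sostokes(\Vd) = \frac{d(z(\theta) \circ (\I + \theta))}{d\theta}(0) \Vd = \dot{z}(\Vd)|_{\Gamma_0^w}.
\end{align}
Well-definedness of $d\sostokes$ as a map into $L^2(\Gamma_0^w)$ is then immediate from Lemma \ref{lemma:ds stokes bd mat existence}, which furnishes $\dot{z}(\Vd) \in H^{\frac{3}{2}}(\Gamma_0)$ for every $\Vd \in C^{4,1}(\R^2,\R^2) \supset \V^4$, and the continuous embedding $H^{\frac{3}{2}}(\Gamma_0) \embed L^2(\Gamma_0^w)$ finishes this step.

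Next I would invoke the material/shape-derivative identity on the boundary (Definition \ref{def:elliptic bd shape derivative}), used already in the analogous step of Lemma \ref{lemma:lin operator pot}:
\begin{align}
\dot{z}(\Vd) = z'(\Vd) - \partial_\tv z(0)(\tv \cdot \Vd) \qquad \text{on } \Gamma_0.
\end{align}
The crucial observation is that the correction term is purely tangential, while $\Vd \in \V^4$ is normal on $\Gamma_0$ by construction, so $\tv \cdot \Vd = 0$ and we obtain $\dot{z}(\Vd)|_{\Gamma_0^w} = z'(\Vd)|_{\Gamma_0^w}$. Substituting the explicit representation of $z'(\Vd)$ from Lemma \ref{lemma:ds stokes bd shape existence}, namely $z'(\Vd) = \omega'(\Vd)|_{\Gamma_0} + \partial_\nv \omega(0)(\nv \cdot \Vd)$, then gives
\begin{align}
d\sostokes(\Vd) = \omega'(\Vd)|_{\Gamma_0^w} + \partial_\nv \omega(0)(\nv \cdot \Vd),
\end{align}
which is the claimed formula.

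At this point essentially no analytical obstacle remains: the heavy lifting has already been done by the implicit function theorem argument for $\dot{\Psi}$ (Lemma \ref{ds:lemma:material derivative stokes}), by propagating the regularity to $\dot{\omega}$ and $\dot{z}$ (Lemma \ref{lemma:ds stokes bd mat existence}), and by the computation of $z'$ (Lemma \ref{lemma:ds stokes bd shape existence}). The only non-trivial bookkeeping step is keeping track of which derivatives are tangential versus normal in the material/shape-derivative identity, but this is trivialized by the fact that $\V^4$ consists of purely normal deformations on $\Gamma_0$. The argument is thus a short assembly of existing ingredients, and reads essentially identically to the corresponding final display in Lemma \ref{lemma:lin operator pot}.
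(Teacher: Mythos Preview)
Your proposal is correct and follows essentially the same route as the paper: cite Lemma~\ref{lemma:ds stokes bd mat existence} for well-definedness, then combine Definition~\ref{def:elliptic bd shape derivative} with Lemma~\ref{lemma:ds stokes bd shape existence} and use that $\Vd \in \V^4$ is normal on $\Gamma_0$ so the tangential correction vanishes. One small slip: from Definition~\ref{def:elliptic bd shape derivative} the identity reads $\dot{z}(\Vd) = z'(\Vd) + \partial_\tv z(0)(\tv \cdot \Vd)$, not with a minus sign, but since $\tv \cdot \Vd = 0$ this has no effect on the conclusion.
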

\begin{proof}
 We have shown in Lemma \ref{lemma:ds stokes bd mat existence} that the operator is well-defined. Let $\Vd \in \V^4$. Remember that by definition $\Vd$ is normal on $\Gamma_0$. We conclude using Definition \ref{def:elliptic bd shape derivative} and Lemma \ref{lemma:ds stokes bd shape existence} 
\begin{align}
 d \sostokes(\Vd) = \dot{z}(\Vd) = z'(\Vd) + \partial_\tv z(0) (\tv \cdot \Vd)
 = \omega'(\Vd)|_{\Gamma^w_0} + \partial_\nv \omega(0) (\nv \cdot \Vd).
\end{align}
\end{proof}

\subsection{Approximate Controllability}
The approximate controllability of the operator $d\sostokes$ depends on the uniqueness question addressed in the following lemma. However, we can only show that the corresponding bilinear form is coercive but not that it is elliptic. Therefore, we have to rely on the weaker argument of Theorem \ref{pre:thm:Fredholm Alternative}, which states that the homogeneous solutions form a finite dimensional subspace. In the case that zero is no eigenvalue of the corresponding representation operator, this subspace is trivial. There is no way to tell whether zero is an eigenvalue of not. We know that there are only countably many eigenvalues which do not accumulate in a finite region (see \cite{wloka1987partial}).
\begin{lemma}
\label{zua:lemma:Stokes Uniqueness}
 Assume that $\Omega_0$ is bounded and of class $C^{4,1}$ and $c_{11} \in C^1(\Omegab_0)$. We consider
 \begin{align}
 \label{zua:eq:BiharmonicUniqueness}
 \begin{aligned}
  \Delta \Delta \phi &= 0
  \qquad &&\mbox{in $\Omega_0$}
  \\
  \phi &= 0
  \qquad &&\mbox{on $\Gamma_0$}
  \\
  \partial_\nv \phi &= 0
  \qquad &&\mbox{on $\Gamma_0^{in}$}
  \\
  \Delta \phi + c_{11} \partial_\nv \phi &= 0
  \qquad &&\mbox{on $\Gamma_0^w$}
 \end{aligned}
 \end{align}
 and define $\Z := \{ \phi \in H^4(\Omega_0); \phi \mbox{ solves \eqref{zua:eq:BiharmonicUniqueness}} \}$. Then, $\mathcal{Z}$ is a finite dimensional subspace of $H^4(\Omega_0)$.
\end{lemma}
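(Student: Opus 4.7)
The plan is to cast \eqref{zua:eq:BiharmonicUniqueness} in variational form on a suitable subspace $V$ of $H^2(\Omega_0)$, establish $V$-coercivity (in Wloka's sense) of the associated bilinear form, and then invoke the Fredholm alternative (Theorem \ref{pre:thm:Fredholm Alternative}) to conclude that the kernel is finite dimensional.

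First I would introduce
\begin{align*}
V := \{ v \in H^2(\Omega_0) : v|_{\Gamma_0} = 0 \mbox{ and } \partial_\nv v|_{\Gamma_0^{in}} = 0 \},
\end{align*}
equipped with the $H^2$-norm. Multiplying $\Delta\Delta \phi = 0$ by $v \in V$ and integrating by parts twice, the two boundary terms involving $v$ on $\Gamma_0$ and $\partial_\nv v$ on $\Gamma_0^{in}$ drop out; using the Robin-type condition $\Delta\phi = -c_{11}\partial_\nv \phi$ on $\Gamma_0^w$ to rewrite the remaining term, one arrives at the symmetric bilinear form
\begin{align*}
a(\phi, v) := \int_{\Omega_0} \Delta\phi \, \Delta v \dint x + \int_{\Gamma_0^w} c_{11} \, \partial_\nv \phi \, \partial_\nv v \dint s,
\end{align*}
with the weak problem reading $a(\phi, v) = 0$ for all $v \in V$. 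Since every $\phi \in \Z$ lies in $V$ and satisfies this weak equation, $\Z$ is a linear subspace of $\ker a$, so it suffices to show $\dim \ker a < \infty$.

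Next I would verify the G\aa{}rding-type inequality
\begin{align*}
a(v,v) + \lambda \|v\|_{L^2(\Omega_0)}^2 \geq c \, \|v\|_{H^2(\Omega_0)}^2 \qquad \forall v \in V,
\end{align*}
for suitable constants $\lambda, c > 0$. Because $v = 0$ on $\Gamma_0$, the standard elliptic regularity estimate for the Dirichlet Laplacian gives $\|v\|_{H^2} \leq C \|\Delta v\|_{L^2}$, so the volume term alone controls the full $H^2$-norm. For the boundary contribution, the Rellich-type compact embedding $H^2(\Omega_0) \embed H^{2-\delta}(\Omega_0)$ combined with the continuity of the trace $H^{2-\delta}(\Omega_0) \to L^2(\Gamma_0^w)$ (for small $\delta > 0$) and Ehrling's lemma yield
\begin{align*}
\|\partial_\nv v\|_{L^2(\Gamma_0^w)}^2 \leq \varepsilon \|v\|_{H^2(\Omega_0)}^2 + C_\varepsilon \|v\|_{L^2(\Omega_0)}^2
\end{align*}
for any $\varepsilon > 0$. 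Combined with $c_{11} \in C^1(\Omegab_0) \subset L^\infty(\Gamma_0^w)$ and Young's inequality, choosing $\varepsilon$ small enough proves coercivity.

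Finally I would invoke Theorem \ref{pre:thm:Fredholm Alternative}: since $V \embed L^2(\Omega_0)$ is compact (Rellich), the lower-order shift $\lambda\|\cdot\|_{L^2}^2$ is a compact perturbation of a $V$-elliptic form, placing $a$ within the Fredholm-alternative framework. Consequently $\ker a$ is finite dimensional and $\Z \subset \ker a$ inherits this property. The main obstacle is the coercivity step: since the sign of $c_{11}$ is not controlled, the boundary integral must be absorbed into the principal $H^2$-part at the cost of an $L^2$-remainder, which is precisely why only $V$-coercivity (not full $V$-ellipticity) can hold, and hence why one can only conclude finite dimensionality rather than outright uniqueness. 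This absorption is also the reason the regularity $c_{11} \in C^1(\Omegab_0)$ is needed (cf.\ Remark \ref{remark:high regularity}) for the Wloka Fredholm framework to apply to this mixed boundary-value problem.
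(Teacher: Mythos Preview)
Your proposal is correct and follows essentially the same approach as the paper: the same test space $V$, the same bilinear form obtained by two integrations by parts, $V$-coercivity, and the Fredholm alternative (Theorem~\ref{pre:thm:Fredholm Alternative}) to conclude finite dimensionality. The only difference is cosmetic: the paper splits the form into a principal part $a(\varphi,\eta)=\int_{\Omega_0}\Delta\varphi\,\Delta\eta\,dx$ and a boundary form $c(\varphi,\eta)=\int_{\Gamma_0^w}c_{11}\,\partial_\nv\varphi\,\partial_\nv\eta\,ds$ and simply cites Wloka for the $V$-coercivity of $a+c$, whereas you spell out the G\aa{}rding estimate explicitly via the $H^2$--Dirichlet regularity bound and an Ehrling-type absorption of the trace term.
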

\begin{proof}
Define $V := \{ u \in H^2(\Omega_0); u|_{\Gamma_0} = 0; \partial_\nv u|_{\Gamma_0^{in}} = 0 \}$. Let $\phi \in \Z$ and let $\eta \in V$ be a test function. Then,
 \begin{align}
 \label{zua:eq:Stokes Uniqueness integration}
  \begin{aligned}
   0 &= \int_{\Omega_0} \Delta \Delta \phi \eta \dint x
   \\
   &= \int_{\Omega_0} \Delta \phi \Delta \eta \dint x 
   + \int_{\Gamma_0} \partial_\nv \Delta \phi \eta \dint s
   - \int_{\Gamma_0} \Delta \phi \partial_\nv \eta \dint s
   \\
   &= \int_{\Omega_0} \Delta \phi \Delta \eta \dint x
   + \int_{\Gamma_0^w} c_{11} \partial_\nv \phi \partial_\nv \eta \dint s.
  \end{aligned}
 \end{align}
 We define the bilinear form
 \begin{align}
  a(\varphi, \eta) := \int_{\Omega_0} \Delta \varphi \Delta \eta \dint x
 \end{align}
 and the boundary form
 \begin{align}
  c(\varphi, \eta) := \int_{\Gamma_0^w} c_{11} \partial_\nv \varphi \partial_\nv \eta \dint s.
 \end{align}
The space $V$ is a closed subspace of $H^2(\Omega_0)$ with $H^2_0(\Omega_0) \subset V \subset H^2(\Omega)$ and $a(\varphi, \eta)$ is $V$-coercive (cf. Definition \ref{def:V-Coercive} and \cite{wloka1987partial}). Because of $c_{11} \in C^1(\Omegab_0)$ the bilinear form $a(\varphi, \eta) + c(\varphi, \eta)$ is also $V$-coercive (see \cite{wloka1987partial}). The embedding $V \embed L^2(\Omega_0) \embed V'$ is a Gelfand triple and $V \embed L^2(\Omega_0)$ is compact (see \cite{wloka1987partial}). Thus, the assumptions of Theorem \ref{pre:thm:Fredholm Alternative} hold for the weak formulation: 

Find $\varphi \in V$ such that
 \begin{align}
 \label{zua:eq:Stokes Uniqueness weak}
  a(\varphi, \eta) + c(\varphi, \eta) = 0
  \qquad \mbox{for all $\eta \in V$}.
 \end{align}
 From Theorem \ref{pre:thm:Fredholm Alternative} we conclude that $\tilde{\Z} := \{ \varphi \in V; \varphi \mbox{ solves \eqref{zua:eq:Stokes Uniqueness weak}}\}$ is finite dimensional. Because of \eqref{zua:eq:Stokes Uniqueness integration} we know that every $\phi \in \Z$ solves \eqref{zua:eq:Stokes Uniqueness weak} and thus we conclude
 \begin{align}
  \Z = \tilde{\Z} \cap H^4(\Omega_0)
 \end{align}
 which yields the result.
\end{proof}

The next lemma shows the regularity of the coefficient appearing in the approximate controllability proof.
\begin{lemma}
\label{ds:lemma:regularity c11}
 Assume that $\omega(0) \neq 0$ on $\Gamma_0^w$. Then,
 \begin{align}
  c_{11} := - \frac{\partial_\nv \omega(0)}{\omega(0)} \in C^1(\Gamma_0^w).
 \end{align}
\end{lemma}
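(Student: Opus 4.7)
The plan is to push the established interior regularity of $\Psi(0)$ down to the boundary trace of $\omega(0)$ and its normal derivative, and then treat the quotient by a routine algebra-of-$C^1$-functions argument. First I would recall from Lemma \ref{ds:lemma:regularity BHSTO} that $\Psi(0) \in H^6(\Omega_0)$, so from \eqref{problem:ds biharmonic stokes} at $\theta=0$ we obtain $\omega(0)=-\Delta \Psi(0) \in H^4(\Omega_0)$. Since $\Omega_0 \subset \R^2$ is bounded and of class $C^{6,1}$, the Sobolev embedding in dimension two gives $H^4(\Omega_0) \hookrightarrow C^{2,\alpha}(\overline{\Omega_0})$ for any $\alpha \in (0,1)$. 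Consequently $\omega(0) \in C^2(\overline{\Omega_0})$ and $\Grad \omega(0) \in C^1(\overline{\Omega_0})^2$ component-wise.

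Next I would transfer this to the boundary. The outward unit normal $\nv$ lies in $C^{5,1}(\Gamma_0)$ since $\Omega_0$ is $C^{6,1}$, and restriction of the $C^1$-gradient to $\Gamma_0$ yields a $C^1$ trace. Therefore
\begin{align*}
 \partial_\nv \omega(0) \;=\; \nv \cdot \bigl(\Grad \omega(0)\bigr)\big|_{\Gamma_0} \;\in\; C^1(\Gamma_0),
\end{align*}
as a product of a $C^1$ function with a $C^{5,1}$ coefficient, and likewise $\omega(0)|_{\Gamma_0} \in C^2(\Gamma_0) \subset C^1(\Gamma_0)$. Restricting to the closed arc $\Gamma_0^w$, both the numerator and the denominator in the definition of $c_{11}$ are $C^1$ functions.

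Finally I would invoke the hypothesis $\omega(0)\neq 0$ on $\Gamma_0^w$: because $\omega(0)|_{\Gamma_0^w}$ is continuous and $\Gamma_0^w$ is compact (as a closed subset of the compact boundary $\Gamma_0$), pointwise non-vanishing upgrades to a uniform bound $|\omega(0)| \geq c > 0$. Hence $1/\omega(0) \in C^1(\Gamma_0^w)$ by the chain rule, and
\begin{align*}
 c_{11} \;=\; -\partial_\nv \omega(0)\cdot\bigl(1/\omega(0)\bigr) \;\in\; C^1(\Gamma_0^w)
\end{align*}
because $C^1(\Gamma_0^w)$ is a Banach algebra. There is no serious obstacle in the argument; the only reason the hypotheses of the paper need to be as strong as $\Omega_0 \in C^{6,1}$ and $g \in H^{5+1/2}(\Gamma_0)$ is precisely to furnish enough regularity of $\Psi(0)$ to make the two Sobolev embedding steps above go through, which is the quantitative source of the regularity requirement flagged in Remark \ref{remark:high regularity}.
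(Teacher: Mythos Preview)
Your proof is correct and follows essentially the same route as the paper: both start from $\Psi(0)\in H^6(\Omega_0)$, deduce $\omega(0)\in H^4(\Omega_0)$, and then use Sobolev embedding to obtain $C^1$ regularity of $\omega(0)$ and $\partial_\nv\omega(0)$ on $\Gamma_0^w$ before dividing. The only cosmetic difference is that the paper applies the trace theorem first and then the one-dimensional Sobolev embedding on $\Gamma_0$, whereas you apply the two-dimensional Sobolev embedding on $\overline{\Omega_0}$ first and then restrict; both orderings are valid and yield the same conclusion.
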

\begin{proof}
 We have shown that $\Psi(0) \in H^6(\Omega_0)$ and thus $\omega(0) \in H^4(\Omega_0)$. Then, $\omega|_{\Gamma_0} \in H^{5+\frac{1}{2}}(\Gamma_0)$ and $\partial_\nv \omega|_{\Gamma_0} \in H^\frac{5}{2}(\Gamma_0)$. By the Lemma of Sobolev (see \cite{wloka1987partial}) we have  $\omega|_{\Gamma_0} \in C^1(\Gamma_0)$ and $\partial_\nv \omega|_{\Gamma_0} \in C^1(\Gamma_0)$ and since $\omega$ is non-zero on $\Gamma_0^w$, $c_{11} \in C^1(\Gamma_0^w)$ holds.
\end{proof}

Finally, we are prepared to show the main result for the operator $d\sostokes$.
\begin{proof}[of Theorem \ref{ds:thm:approximate control stokes}]
Define
\begin{align}
 H^\frac{5}{2}_{i=0}(\Gamma_0) = \{\mu \in H^\frac{5}{2}(\Gamma_0); \mu = 0 \mbox{ on $\Gamma_0^{in}$} \}
\end{align}
 and for $\mu \in H^\frac{5}{2}_{i=0}(\Gamma_0)$ let $\phi(\mu) \in H^4(\Omega_0)$ be the solution of the adjoint problem
 \begin{align}
 \begin{aligned}
  \Delta \Delta \phi(\mu) &= 0
  \qquad &&\mbox{in $\Omega_0$}
  \\
  \phi(\mu) &= 0
  \qquad &&\mbox{on $\Gamma_0$}
  \\
  \partial_\nv \phi(\mu) &= \mu
  \qquad &&\mbox{on $\Gamma_0$}
 \end{aligned}
 \end{align}
 where the existence and regularity follows from \cite{wloka1987partial}. For $(\Vd, \mu) \in \V^4 \times H^\frac{5}{2}_{i=0}(\Gamma_0)$ integration by parts yields
 \begin{align}
  \begin{aligned}
   0 &= \int_{\Omega_0} \Delta \Delta \Psi'(\Vd) \phi(\mu) \dint x
   \\
   &= \int_{\Omega_0} \Delta \Psi'(\Vd) \Delta \phi(\mu) \dint x
   - \int_{\Gamma_0} \Delta \Psi'(\Vd) \partial_\nv \phi(\mu) \dint s
   \\
   &= \int_{\Omega_0} \Psi'(\Vd) \Delta \Delta \phi(\mu) \dint x
   - \int_{\Gamma_0} \Delta \Psi'(\Vd) \partial_\nv \phi(\mu) \dint s
   + \int_{\Gamma_0} \partial_\nv \Psi'(\Vd) \Delta \phi(\mu) \dint s
  \end{aligned}
 \end{align}
 and we get the identity
 \begin{align}
  -\int_{\Gamma_0^w} \omega'(\Vd) \mu \dint s
  = \int_{\Gamma_0^w} (\nv \cdot \Vd) \omega(0) \Delta \phi(\mu) \dint s.
 \end{align}

Now, assume that $\mu \in \im(d\sostokes)^\perp \cap H^\frac{5}{2}_{i=0}(\Gamma_0)$, i.e.,
\begin{align}
 \int_{\Gamma_0^w} d\sostokes(\Vd) \mu \dint s = 0
 \qquad \mbox{for all $\Vd \in \V^4$}.
\end{align}
We conclude
\begin{align}
 \begin{aligned}
  0 &= \int_{\Gamma_0^w} d\sostokes(\Vd) \mu \dint s
  \\
  &= \int_{\Gamma_0^w} \omega'(\Vd) \mu \dint s
  + \int_{\Gamma_0^w} \partial_\nv \omega(0) (\nv \cdot \Vd) \mu \dint s
  \\
  &= \int_{\Gamma_0^w} (\nv \cdot \Vd)(-\omega(0) \Delta \phi(\mu) 
  + \partial_\nv \omega(0) \partial_\nv \phi(\mu)) \dint s.
 \end{aligned}
\end{align}
Since $\{\nv \cdot \Vd; \Vd \in \V^4\}$ is dense in $L^2(\Gamma_0^w)$ we derive 
\begin{align}
 -\omega(0) \Delta \phi(\mu) 
  + \partial_\nv \omega(0) \partial_\nv \phi(\mu) = 0
 \qquad \mbox{on $\Gamma_0^w$}.
 \end{align}
 Because of $\omega(0) = \sostokes(0) \neq 0$ on $\Gamma_0^w$, we can define
 \begin{align}
  c_{11} := -\frac{\partial_\nv \omega(0)}{\omega(0)} \in C^1(\Gamma_0^w),
 \end{align}
 where the regularity follows from Lemma \ref{ds:lemma:regularity c11}. This yields the uniqueness problem
\begin{align}
\label{ds:eq:approximate control stokes eq1}
 \begin{aligned}
  \Delta \Delta \phi(\mu) &= 0
  \qquad &&\mbox{in $\Omega_0$}
  \\
  \phi(\mu) &= 0
  \qquad &&\mbox{on $\Gamma_0$}
  \\
  \partial_\nv \phi(\mu) &= 0
  \qquad &&\mbox{on $\Gamma_0^{in}$}
  \\
  \Delta \phi(\mu) + c_{11}\phi(\mu) &= 0
  \qquad &&\mbox{on $\Gamma_0^w$}.
 \end{aligned}
 \end{align}
 Define
 \begin{align}
  \mathcal{Z} := \{\phi(\mu) \in H^4(\Omega_0); \mbox{$\phi(\mu)$ is solution of \eqref{ds:eq:approximate control stokes eq1}} \}
 \end{align}
 and
 \begin{align}
  \mathcal{Z}_{\partial_\nv} := \{ \mu = \partial_\nv \phi|_{\Gamma_0^w}; \phi \in \mathcal{Z} \}.
 \end{align}
 By Lemma \ref{zua:lemma:Stokes Uniqueness} we know that $\mathcal{Z}$ is a finite dimensional subspace of $H^4(\Omega_0)$. Then, $\mathcal{Z}_{\partial_\nv}$ is a finite dimensional subspace of $H^\frac{5}{2}_{i=0}(\Gamma_0)|_{\Gamma_0^w}$ and thus of $L^2(\Gamma_0^w)$. Using Lemma \ref{lemma:approximately controllable} we conclude that $d\sostokes$ is approximately controllable as a mapping to $L^2(\Gamma_0^w)/\mathcal{Z}_{\partial_\nv}$.
\end{proof}

\section{Conclusion}
\label{sec:Conclusion}
We have studied the controllability of two shape-dependent operators based on flow problems. We were able to prove approximate controllability for linearizations of these operators using an adjoint argument. For the Stokes operator we have to note that a small subspace remains which is not controllable, but this subspace is finite dimensional. Even though we have studied linearizations, we can draw conclusions for the actual operators. Having the approximate controllability property for the linearization means that we can change the observable into almost every direction by applying infinitesimal shape perturbations. Our application in view is the design of polymer distributors with specific wall shear stress profiles. Theorem \ref{ds:thm:approximate control stokes} does suggest that the space of reachable wall shear stress profiles is rather large. Therefore, we can expect a good performance of the shape optimization algorithm, meaning that the optimal stress profiles lie close to the desired target stress in the \mbox{$L^\infty$-sense}, even though we are only using $L^2$ shape optimization. This statement does agree with our numerical experience form \cite{leithauser2013controllability}, where we have solved an optimization problem based on the Stokes operator.

\begin{appendix} 
\section{Shape Differentiation}
\label{sec:appendix:shape diff}
We provide the concepts of material and shape derivatives and cite the essential theory on the differentiation of shape-dependent integrals. Further details can be found in \cite{sokolowski1992introduction}.
\begin{definition}[Material Derivative]
\label{def:material derivative}
  Let $y(\theta) \in H^m(\Omega_\theta)$ be given for $\theta \in \Theta^k$. Then, $\dot{y}(\Vd)$ is called material derivative of $y(\theta)$ in direction $\Vd \in C^{k,1}(\R^2, \R^2)$ if and only if the limit
 \begin{align}
  \dot{y}(\Vd) = \lim_{s\rightarrow 0} \frac{1}{s} \big( y(s\Vd) \circ (\I+s\Vd) - y(0) \big)  \in H^m(\Omega_0)
 \end{align}
 exists.
\end{definition}

The material derivative of a boundary function is defined in a similar way:
\begin{definition}[Boundary Material Derivative]
\label{def:material derivative bd}
 Let $z(\theta) \in H^r(\Gamma_\theta)$ be given for $\theta \in \Theta^k$. Then, $\dot{z}(\Vd)$ is called material derivative in direction of $\Vd \in C^{k,1}(\R^2, \R^2)$ if and only if the limit
  \begin{align}
  \dot{z}(\Vd) = \lim_{s\rightarrow 0} \frac{1}{s} \big( z(s\Vd) \circ (\I + s\Vd) - z(0) \big) \in H^r(\Gamma_0)
 \end{align}
 exists.
\end{definition}

The following relation holds between the material derivatives and the boundary material derivative:
\begin{lemma}[from \cite{sokolowski1992introduction}]
\label{shape:lemma:boundary material derivative}
 Let $k \geq m \geq 1$. Let $y(\theta) \in H^m(\Omega_\theta)$ and let $z(\theta) = y(\theta)|_{\Gamma_\theta} \in H^{m-\frac{1}{2}}(\Gamma_\theta)$ for $\theta \in \Theta^k$. Suppose that the material derivative $\dot{y}(\Vd) \in H^m(\Omega_0)$ exists for $\Vd \in C^{k,1}(\R^2, \R^2)$. Then, the material derivative of the boundary function exists and is given by $\dot{z}(\Vd) = \dot{y}(\Vd)|_{\Gamma_0} \in H^{m-\frac{1}{2}}(\Gamma_0)$.
\end{lemma}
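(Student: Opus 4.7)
The plan is to rewrite the defining difference quotient for $\dot{z}(\Vd)$ as the trace on $\Gamma_0$ of the defining difference quotient for $\dot{y}(\Vd)$, and then conclude by continuity of the trace operator. The conceptual core is simply that the pull-back by $\I + s\Vd$ and the boundary restriction commute, because $\I + s\Vd$ is a $(k,1)$-diffeomorphism mapping $\Gamma_0$ onto $\Gamma_{s\Vd}$.

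First I would record the algebraic identity. For each sufficiently small $s$, $s\Vd \in \Theta^k$, so $\I + s\Vd$ is a diffeomorphism with $(\I + s\Vd)(\Omega_0) = \Omega_{s\Vd}$ and hence $(\I + s\Vd)(\Gamma_0) = \Gamma_{s\Vd}$. For $y(s\Vd) \in H^m(\Omega_{s\Vd})$ with $m \ge 1$, the trace $z(s\Vd) = y(s\Vd)|_{\Gamma_{s\Vd}}$ is well defined in $H^{m-1/2}(\Gamma_{s\Vd})$, and by the change-of-variable rule for traces under $C^{k,1}$-diffeomorphisms we have
\begin{align*}
\bigl(z(s\Vd) \circ (\I + s\Vd)\bigr)\bigr|_{\Gamma_0} \;=\; \bigl(y(s\Vd) \circ (\I + s\Vd)\bigr)\bigr|_{\Gamma_0}
\end{align*}
in $H^{m-1/2}(\Gamma_0)$. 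This is the key identity; I would justify it by noting that both sides agree pointwise a.e.\ on $\Gamma_0$, while the right-hand side is the trace of an $H^m(\Omega_0)$-function and the left-hand side is the pull-back of an $H^{m-1/2}(\Gamma_{s\Vd})$-function, so both sit in $H^{m-1/2}(\Gamma_0)$.

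Next I would form the difference quotient in $H^{m-1/2}(\Gamma_0)$,
\begin{align*}
Q_s \;:=\; \frac{1}{s}\bigl(z(s\Vd) \circ (\I + s\Vd) - z(0)\bigr)
\;=\; \frac{1}{s}\bigl(y(s\Vd)\circ(\I+s\Vd) - y(0)\bigr)\Big|_{\Gamma_0},
\end{align*}
where the second equality uses the identity above together with $z(0) = y(0)|_{\Gamma_0}$. By the hypothesis that the material derivative $\dot{y}(\Vd)$ exists in $H^m(\Omega_0)$, the quantity inside the trace converges in $H^m(\Omega_0)$ to $\dot{y}(\Vd)$ as $s \to 0$. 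The trace operator $\gamma_0: H^m(\Omega_0) \to H^{m-1/2}(\Gamma_0)$ is linear and continuous (using $m \ge 1$ and the regularity $\Omega_0 \in C^{k+1,1}$ with $k \ge m$ so that the trace theorem applies on this boundary), hence $Q_s \to \dot{y}(\Vd)|_{\Gamma_0}$ in $H^{m-1/2}(\Gamma_0)$. This is exactly the statement that $\dot{z}(\Vd)$ exists in $H^{m-1/2}(\Gamma_0)$ and equals $\dot{y}(\Vd)|_{\Gamma_0}$.

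The only nontrivial step is the commutation identity for the trace and the diffeomorphism composition; everything else is continuity of the trace operator plus the hypothesis. I would handle that step by reducing to smooth approximations: pick a sequence $y_n \in C^\infty(\overline{\Omega_{s\Vd}})$ with $y_n \to y(s\Vd)$ in $H^m(\Omega_{s\Vd})$; for smooth $y_n$ the identity is pointwise obvious, and both the trace-then-pullback map and the pullback-then-trace map are continuous $H^m(\Omega_{s\Vd}) \to H^{m-1/2}(\Gamma_0)$, so the identity passes to the limit. No additional ingredient is needed.
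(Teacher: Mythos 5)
Your argument is correct and complete. Note, however, that the paper gives no proof of this lemma at all: it is stated in Appendix \ref{sec:appendix:shape diff} purely as a citation of \cite{sokolowski1992introduction}, so there is no in-paper proof to compare against. Your route --- establishing that restriction to the boundary commutes with the pull-back by the diffeomorphism $\I + s\Vd$ (via smooth approximation and continuity of both composite maps $H^m(\Omega_{s\Vd}) \rightarrow H^{m-\frac{1}{2}}(\Gamma_0)$), rewriting the boundary difference quotient as the trace of the interior difference quotient, and passing to the limit using the boundedness of the trace operator $\gamma_0: H^m(\Omega_0) \rightarrow H^{m-\frac{1}{2}}(\Gamma_0)$ --- is the standard argument and matches the definitions of the material derivatives (Definitions \ref{def:material derivative} and \ref{def:material derivative bd}) used here. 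You correctly flag the two regularity points that make it work: $m \geq 1$ and $\Omega_0$ of class $C^{k+1,1}$ with $k \geq m$ so that the trace theorem applies, and $\I + s\Vd$ a $(k,1)$-diffeomorphism so that pull-back preserves the relevant Sobolev classes. No gap.
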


Next, we define the shape derivative. The difference between material and shape derivative is that the first is the derivative of $y(\theta) \circ (\I+\theta)$ and the second the derivative of just $y(\theta)$ without the pull-back. It is convenient to derive the definition of the shape derivative from the material derivative by just subtracting the part originating from differentiating the map $(\I+\theta)$. This way,  we can directly derive the existence from the existence of the material derivative.
\begin{definition}[Shape Derivative]
\label{def:elliptic shape derivative}
 Let $y(\theta) \in H^m(\Omega_\theta)$ for $\theta \in \Theta^k$. Assume that the material derivative $\dot{y}(\Vd) \in H^m(\Omega_0)$ exists for $\Vd \in C^{k,1}(\R^2, \R^2)$. Then, shape derivative in direction $\Vd$ is defined by
 \begin{align}
  y'(\Vd) := \dot{y}(\Vd) - \Grad y(0) \cdot \Vd \in H^{m-1}(\Omega_0).
 \end{align}
 Furthermore, we can see directly from the definition that $y(0) \in H^{m+1}(\Omega_0)$ implies $y'(\Vd) \in H^m(\Omega_0)$.
\end{definition}

On the boundary we define the shape derivative in the following way:
\begin{definition}[Boundary Shape Derivative]
\label{def:elliptic bd shape derivative}
 Let $z(\theta) \in H^r(\Gamma_\theta)$ for $\theta \in \Theta^k$ and assume that the material derivative $\dot{z}(\Vd) \in H^r(\Gamma_0)$ exists for $\Vd \in C^{k,1}(\R^2, \R^2)$. Then, the shape derivative in direction $\Vd$ is defined by
 \begin{align}
  z'(\Vd) := \dot{z}(\Vd) - \partial_\tv z(0) \, \tv \cdot \Vd \in H^{r-1}(\Gamma_0).
 \end{align}
 Furthermore, if $z(0) \in H^{r+1}(\Gamma_0)$, then $z'(\Vd) \in H^r(\Gamma_0)$.
\end{definition}

The following lemma draws a connection between shape derivatives on the domain and the boundary:
\begin{lemma}[from \cite{sokolowski1992introduction}]
\label{shape:lemma:boundary shape derivative}
 Let $k \geq m \geq 1$. For $\theta \in \Theta^k$ let $y(\theta) \in H^{m}(\Omega_\theta)$ and $z(\theta) = y(\theta)|_{\Gamma_\theta} \in H^{m-\frac{1}{2}}(\Gamma_\theta)$ and suppose that $y(0) \in H^{m+1}(\Omega_0)$. Assume that $y'(\Vd) \in H^{m}(\Omega_0)$ exists for $\Vd \in C^{k,1}(\R^2, \R^2)$. Then,
 \begin{align}
  z'(\Vd) = y'(\Vd)|_{\Gamma_0} + \partial_\nv y(0) (\Vd \cdot \nv) \in H^{m-\frac{1}{2}}(\Gamma_0).
 \end{align}
\end{lemma}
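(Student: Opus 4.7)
The plan is to reduce the boundary shape derivative to the domain material derivative via its definition, substitute the already-known expression of $y'(\Vd)$, and then show that the tangential contribution is cancelled exactly by the term $\partial_\tv z(0)(\tv\cdot\Vd)$ appearing in the definition of $z'(\Vd)$. The key algebraic observation is the orthogonal decomposition of both $\Vd$ and $\Grad y(0)$ along $\nv$ and $\tv$ on $\Gamma_0$, together with the identity $\partial_\tv z(0)=\partial_\tv y(0)|_{\Gamma_0}$ (which holds because $z(0)=y(0)|_{\Gamma_0}$ and tangential differentiation is an intrinsic operation on the trace).

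Concretely, first I would invoke Lemma \ref{shape:lemma:boundary material derivative}, whose hypotheses are satisfied since $y(\theta)\in H^m(\Omega_\theta)$ and the material derivative $\dot{y}(\Vd)\in H^m(\Omega_0)$ exists; this yields $\dot{z}(\Vd)=\dot{y}(\Vd)|_{\Gamma_0}\in H^{m-\frac{1}{2}}(\Gamma_0)$. By Definition \ref{def:elliptic shape derivative} applied in the domain, $\dot{y}(\Vd)=y'(\Vd)+\Grad y(0)\cdot\Vd$, and since $y(0)\in H^{m+1}(\Omega_0)$ we have $\Grad y(0)\in H^m(\Omega_0,\R^2)$, so its trace on $\Gamma_0$ is in $H^{m-\frac{1}{2}}(\Gamma_0,\R^2)$ and the pointwise product with $\Vd\in C^{k,1}(\R^2,\R^2)$ is admissible. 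Plugging this into Definition \ref{def:elliptic bd shape derivative},
\begin{align}
 z'(\Vd)
 = \dot{z}(\Vd) - \partial_\tv z(0)(\tv\cdot\Vd)
 = y'(\Vd)|_{\Gamma_0} + \bigl(\Grad y(0)\cdot\Vd\bigr)\big|_{\Gamma_0} - \partial_\tv z(0)(\tv\cdot\Vd).
\end{align}

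Next I would decompose on $\Gamma_0$ the gradient and the direction field along the Frenet basis $\{\nv,\tv\}$. Since $y(0)\in H^{m+1}(\Omega_0)$, the normal and tangential traces $\partial_\nv y(0),\partial_\tv y(0)\in H^{m-\frac{1}{2}}(\Gamma_0)$ are well defined, and on $\Gamma_0$
\begin{align}
 \Grad y(0)\cdot\Vd = \partial_\nv y(0)\,(\nv\cdot\Vd) + \partial_\tv y(0)\,(\tv\cdot\Vd).
\end{align}
Furthermore, the intrinsic identity $\partial_\tv z(0)=\partial_\tv y(0)|_{\Gamma_0}$ (tangential differentiation commutes with the boundary trace, using $k\ge m\ge 1$ for sufficient smoothness of $\tv$) makes the $\tv\cdot\Vd$ contributions cancel, leaving
\begin{align}
 z'(\Vd) = y'(\Vd)|_{\Gamma_0} + \partial_\nv y(0)\,(\nv\cdot\Vd),
\end{align}
which is the claimed formula.

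Finally, I would verify the regularity. The hypothesis $y'(\Vd)\in H^m(\Omega_0)$ and the trace theorem give $y'(\Vd)|_{\Gamma_0}\in H^{m-\frac{1}{2}}(\Gamma_0)$; similarly $\partial_\nv y(0)|_{\Gamma_0}\in H^{m-\frac{1}{2}}(\Gamma_0)$ from $y(0)\in H^{m+1}(\Omega_0)$, and $\nv\cdot\Vd\in C^{k-1,1}(\Gamma_0)\subset H^{m-\frac{1}{2}}(\Gamma_0)$ under the assumption $k\ge m$, so the product stays in $H^{m-\frac{1}{2}}(\Gamma_0)$. The main subtlety, rather than any deep obstacle, is ensuring that each term genuinely lives in the claimed Sobolev space; this is where the assumption $k\ge m\ge 1$ is used, together with the extra regularity $y(0)\in H^{m+1}(\Omega_0)$ needed to give $\partial_\nv y(0)$ a boundary trace.
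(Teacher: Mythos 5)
Your argument is correct. Note first that the paper itself gives no proof of this lemma: it is imported verbatim from \cite{sokolowski1992introduction}, so there is nothing internal to compare against. What you have written is the standard derivation and it is consistent with the paper's own definitions: you use Lemma \ref{shape:lemma:boundary material derivative} to get $\dot{z}(\Vd)=\dot{y}(\Vd)|_{\Gamma_0}$ (legitimate, since the existence of $y'(\Vd)$ in Definition \ref{def:elliptic shape derivative} presupposes the existence of $\dot{y}(\Vd)\in H^m(\Omega_0)$), substitute $\dot{y}(\Vd)=y'(\Vd)+\Grad y(0)\cdot\Vd$, decompose $\Vd$ and $\Grad y(0)$ in the orthonormal frame $\{\nv,\tv\}$ on $\Gamma_0$, and cancel the tangential contribution against the $\partial_\tv z(0)(\tv\cdot\Vd)$ term of Definition \ref{def:elliptic bd shape derivative} via $\partial_\tv z(0)=\partial_\tv y(0)|_{\Gamma_0}$. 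Two small points of precision: the regularity statement you actually need for $\nv\cdot\Vd$ is not that it lies in $H^{m-\frac{1}{2}}(\Gamma_0)$ but that it is a \emph{multiplier} on $H^{m-\frac{1}{2}}(\Gamma_0)$, which holds since $\nv\in C^{k,1}(\Gamma_0,\R^2)$ (from $\Omega_0$ of class $C^{k+1,1}$) and $\Vd\in C^{k,1}$ with $k\ge m$; and to place $z'(\Vd)$ in $H^{m-\frac{1}{2}}(\Gamma_0)$ via Definition \ref{def:elliptic bd shape derivative} you should remark that $z(0)=y(0)|_{\Gamma_0}\in H^{m+\frac{1}{2}}(\Gamma_0)$ follows from $y(0)\in H^{m+1}(\Omega_0)$ by the trace theorem (though your final formula exhibits the regularity term by term in any case). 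Neither point is a gap in substance.
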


For the pull-back of the Laplacian the following holds:
\begin{lemma}[from \cite{simon1980differentiation}]
\label{ds:lemma:gradf_TV}
 For $k \geq m \geq 2$ let $\theta \in \Theta^k$. Then
 \begin{align}
  (\Delta f) \circ (\I + \theta) = \Delta_\theta (f \circ (\I + \theta))
 \end{align}
 for all $f \in H^m(\Omega_\theta)$, where $\Delta_\theta: H^m(\Omega_0) \rightarrow H^{m-2}(\Omega_0)$ is defined by
 \begin{align}
  \Delta_\theta f := \sum_{i, j, l=1}^d M_{ij}(\theta) \frac{\partial}{\partial x_j}
  \left( M_{il}(\theta) \frac{\partial f}{\partial x_l} \right)
 \end{align}
 with $M(\theta) := \left[(D(\I+\theta))^{-1}\right]^T$.
\end{lemma}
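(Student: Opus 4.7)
The plan is to establish this identity first on smooth functions by iterating the classical chain rule, and then extend to all $f \in H^m(\Omega_\theta)$ by a density/continuity argument. Write $\Phi := \I + \theta$, which is a $(k,1)$-diffeomorphism by hypothesis, and set $\tilde f := f \circ \Phi$. Differentiating $\Phi\circ\Phi^{-1}=\I$ gives $(D\Phi^{-1})(\Phi(x)) = (D\Phi(x))^{-1}$, so after pulling back to $\Omega_0$ the transpose of the Jacobian of $\Phi^{-1}$ at $\Phi(x)$ is precisely $M(\theta)(x)$. This is the one algebraic identity driving everything.

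For smooth $f$, applying the chain rule to $f = \tilde f \circ \Phi^{-1}$ and restricting to $y = \Phi(x)$ gives
\begin{align}
(\partial_{y_i} f)\circ\Phi \;=\; \sum_{l=1}^d \bigl[(D\Phi)^{-1}\bigr]_{li}\,\partial_{x_l}\tilde f \;=\; \sum_{l=1}^d M_{il}(\theta)\,\partial_{x_l}\tilde f.
\end{align}
Then I would iterate this formula with $\partial_{y_i} f$ in place of $f$:
\begin{align}
(\partial^2_{y_i} f)\circ\Phi \;=\; \sum_{j=1}^d M_{ij}(\theta)\,\partial_{x_j}\!\bigl[(\partial_{y_i} f)\circ\Phi\bigr] \;=\; \sum_{j,l=1}^d M_{ij}(\theta)\,\partial_{x_j}\!\bigl(M_{il}(\theta)\,\partial_{x_l}\tilde f\bigr).
\end{align}
Summing over $i$ yields exactly $(\Delta f)\circ\Phi = \Delta_\theta \tilde f$, which is the claimed identity for $f \in C^\infty$.

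The step that I expect to demand the most care is the extension to $H^m(\Omega_\theta)$. Smooth functions are dense in $H^m(\Omega_\theta)$, so it suffices to show that both sides are continuous linear maps $H^m(\Omega_\theta) \to H^{m-2}(\Omega_0)$. On the left, $\Delta: H^m(\Omega_\theta) \to H^{m-2}(\Omega_\theta)$ is bounded, and composition by the $(k,1)$-diffeomorphism $\Phi$ is bounded from $H^{m-2}(\Omega_\theta)$ to $H^{m-2}(\Omega_0)$ since $k \geq m$. On the right, $M(\theta)$ is built from $(D\Phi)^{-1}$ via Cramer's rule (with $\det D\Phi$ bounded away from zero on account of $\|\theta\|_{C^{k,1}} < 0.5$), hence $M(\theta) \in C^{k-1,1}$; expanding $\Delta_\theta \tilde f$ by the product rule produces terms of the form $(C^{k-1,1}\text{-coefficient})\cdot\partial^2\tilde f$ and $(C^{k-2,1}\text{-coefficient})\cdot\partial\tilde f$, both of which are bounded multiplier operators into $H^{m-2}(\Omega_0)$ precisely because $k \geq m$. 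Once these continuity estimates are in place, the smooth identity passes to the limit, and the lemma follows. The crux is therefore not the algebra but the bookkeeping of Sobolev-multiplier and composition estimates, for which the hypothesis $k \geq m \geq 2$ is exactly what is required.
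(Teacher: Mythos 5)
Your argument is correct, but note that the paper itself offers no proof of this lemma: it is stated as imported from \cite{simon1980differentiation} and used as a black box, so what you have supplied is a self-contained derivation rather than an alternative to the paper's route. The algebra is right: differentiating $\Phi\circ\Phi^{-1}=\I$ gives $(D\Phi^{-1})\circ\Phi=(D\Phi)^{-1}$, hence $(\partial_{y_i}f)\circ\Phi=\sum_l M_{il}(\theta)\,\partial_{x_l}(f\circ\Phi)$ with $M(\theta)=[(D\Phi)^{-1}]^T$, and applying this identity once more to $\partial_{y_i}f$ and summing over $i$ reproduces exactly the stated form of $\Delta_\theta$, with the inner index $l$ and the outer index $j$ both contracting against row $i$ of $M(\theta)$. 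Your continuity bookkeeping also closes: $\det(I+D\theta)$ is bounded away from zero because $\|\theta\|_{C^{k,1}}<0.5$, so Cramer's rule gives $M(\theta)\in C^{k-1,1}$, and the resulting coefficients of the second- and first-order terms are Sobolev multipliers into $H^{m-2}(\Omega_0)$ precisely under $k\ge m\ge 2$, while composition with the $(k,1)$-diffeomorphism $\Phi$ is bounded on $H^{m-2}$. The only step worth making explicit is the density claim itself: you need $C^\infty(\overline{\Omega_\theta})$ dense in $H^m(\Omega_\theta)$, which holds here because $\Omega_\theta$ is the image of the $C^{k+1,1}$ domain $\Omega_0$ under a bi-Lipschitz map and is in particular a Lipschitz domain. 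With that noted, your proof is complete and is essentially the computation behind the cited formula.
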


The following results provide the derivatives of integral expressions:
\begin{lemma}[Differentiation of Domain Integrals, see \cite{sokolowski1992introduction}]
\label{zua:lemma:diff domain int}
Let $k \geq 1$. For $\theta \in \Theta^k$ let $y(\theta) \in H^1(\Omega_\theta)$ and $f \in H^1(\R^2)$, let the shape derivative $y'(\Vd) \in H^1(\Omega_0)$ exist for $\Vd \in C^k(\R^2, \R^2)$ and let
\begin{align}
 J(\theta) := \int_{\Omega_\theta} y(\theta) f \dint x.
\end{align}
Then, the derivative of $J(\theta)$ with respect to $\theta$ in direction $\Vd \in C^k(\R^2, \R^2)$ is given by
\begin{align}
 dJ(\Vd) := \frac{d J(\theta)}{d \theta}(0) \Vd
 = \int_{\Omega_0} y'(\Vd) f \dint x + \int_{\Gamma_0} y(0) f (\Vd\cdot\nv) \dint s.
\end{align}
\end{lemma}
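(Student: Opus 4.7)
The plan is to reduce the shape-dependent integral to one over the fixed reference domain $\Omega_0$ via the change of variables $x = (\I+\theta)(\hat x)$, differentiate under the integral sign using standard calculus, and then convert material derivatives back to shape derivatives and use the divergence theorem. Concretely, I would first write
\begin{align}
J(\theta) = \int_{\Omega_0} \big(y(\theta)\circ(\I+\theta)\big)\,\big(f\circ(\I+\theta)\big)\,\gamma(\theta)\dint x,
\end{align}
where $\gamma(\theta) := \det\big(D(\I+\theta)\big) = \det(I + D\theta)$ is positive for $\theta\in\Theta^k$ since $\I+\theta$ is a $(k,1)$-diffeomorphism.

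Next I would differentiate the three factors separately at $\theta=0$ in direction $\Vd$. By the definition of the material derivative (Definition \ref{def:material derivative}), the first factor contributes $\dot y(\Vd)$. Because $f\in H^1(\R^2)$ is independent of $\theta$, the chain rule yields $\frac{d}{d\theta}\big(f\circ(\I+\theta)\big)(0)\Vd = \Grad f\cdot\Vd$. For the Jacobian factor, Jacobi's formula gives
\begin{align}
\frac{d\gamma(\theta)}{d\theta}(0)\Vd = \operatorname{tr}(D\Vd) = \Div\Vd,
\end{align}
and $\gamma(0)=1$. Applying the product rule inside the fixed-domain integral then produces
\begin{align}
dJ(\Vd) = \int_{\Omega_0} \dot y(\Vd)\,f\dint x + \int_{\Omega_0} y(0)\big(\Grad f\cdot\Vd\big)\dint x + \int_{\Omega_0} y(0)\,f\,\Div\Vd\dint x.
\end{align}

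Finally I would convert the material derivative to the shape derivative via $\dot y(\Vd) = y'(\Vd) + \Grad y(0)\cdot\Vd$ (Definition \ref{def:elliptic shape derivative}) and recognize that the remaining three volume terms combine as
\begin{align}
\big(\Grad y(0)\cdot\Vd\big) f + y(0)\big(\Grad f\cdot\Vd\big) + y(0)\,f\,\Div\Vd = \Div\big(y(0)\,f\,\Vd\big),
\end{align}
so that the divergence theorem turns them into the boundary integral $\int_{\Gamma_0} y(0)\,f\,(\Vd\cdot\nv)\dint s$, yielding the claimed formula.

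I expect the main technical obstacle to lie not in the algebraic manipulations, which are standard, but in justifying that the differentiation under the integral at $\theta=0$ is legitimate in the stated $H^1$-regularity setting: one needs a dominated-convergence or mean-value argument for the difference quotients of $y(\theta)\circ(\I+\theta)$, $f\circ(\I+\theta)$, and $\gamma(\theta)$, exploiting the assumed existence of $y'(\Vd)\in H^1(\Omega_0)$ (equivalently of the material derivative) and the smooth dependence of $\gamma$ and of pull-backs on $\theta\in\Theta^k$ established in \cite{simon1980differentiation}. Once this convergence is in place, the remaining manipulations are purely the chain rule, the product rule, and Gauss's theorem.
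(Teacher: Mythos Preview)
The paper does not give its own proof of this lemma; it is stated in the appendix as a citation from \cite{sokolowski1992introduction} and used as a black box. Your argument---pull back to $\Omega_0$ via $x=(\I+\theta)(\hat x)$, differentiate the three factors using the material derivative, Jacobi's formula $\frac{d}{d\theta}\gamma(0)\Vd=\Div\Vd$, then replace $\dot y(\Vd)$ by $y'(\Vd)+\Grad y(0)\cdot\Vd$ and collapse the remainder into $\Div(y(0)f\Vd)$ via the divergence theorem---is exactly the standard proof one finds in that reference, and it is correct as outlined.
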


\begin{lemma}[Differentiation of Boundary Integrals, see \cite{sokolowski1992introduction}]
\label{zua:lemma:diff boundary int}
 Let $k \geq 2$. For $\theta \in \Theta^k$ let $z(\theta) \in H^\frac{3}{2}(\Gamma_\theta)$ be shape differentiable with derivative $z'(\Vd) \in H^\frac{3}{2}(\Gamma_0)$ for $\Vd \in C^k(\R^2, \R^2)$ and let $f \in H^2(\R^2)$. Define
 \begin{align}
  J(\theta) = \int_{\Gamma_\theta} z(\theta) f \dint s.
 \end{align}
 Then, the derivative of $J(\theta)$ with respect to $\theta$ in direction $\Vd \in C^k(\R^2, \R^2)$ is given by
 \begin{align}
  dJ(\Vd) = \int_{\Gamma_0} z'(\Vd) f + (z(0) \partial_\nv f + \kappa z(0) f) (\Vd\cdot\nv) \dint s.
 \end{align}
 In particular if $z(\theta) = y(\theta)|_{\Gamma_\theta}$ for $y(\theta) \in H^2(\Omega_\theta)$ with $y'(\Vd) \in H^2(\Omega_0)$ we have by Lemma \ref{shape:lemma:boundary shape derivative}
 \begin{align}
  dJ(\Vd) = \int_{\Gamma_0} y'(\Vd) f + (\partial_\nv y(0) f + z(0) \partial_\nv f + \kappa z(0) f) (\Vd\cdot\nv) \dint s.
 \end{align}
\end{lemma}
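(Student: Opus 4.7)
My plan is to reduce the moving-domain integral to a fixed-domain integral on $\Gamma_0$ via the diffeomorphism $\I + \theta$, differentiate at $\theta = 0$ using the product rule, and then use tangential integration by parts to absorb the tangential part of $\Vd$ into $z'(\Vd)$. Concretely, the surface measure satisfies $J(\theta) = \int_{\Gamma_0} (z(\theta)\circ(\I+\theta))(f\circ(\I+\theta))\,\omega(\theta)\, ds$, where $\omega(\theta) = \det(D(\I+\theta))\,\lnorm{(D(\I+\theta))^{-T}\nv}$ is the tangential Jacobian, with $\omega(0) \equiv 1$ and $\dot{\omega}(\Vd) = \Div_\Gamma \Vd$, the tangential divergence. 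All three factors are differentiable in $\theta$ under the assumed regularity, so the standard smoothness of composition with $\I + \theta$ (cf.\ the material-derivative estimates used in \cite{simon1980differentiation}) justifies term-by-term differentiation.

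Next I would assemble the three pieces. By Definition \ref{def:material derivative bd}, $\frac{d}{d\theta}\bigl(z(\theta)\circ(\I+\theta)\bigr)\big|_{0}\Vd = \dot{z}(\Vd)$; for the smooth factor $f\in H^2(\R^2)$, the chain rule gives $\frac{d}{d\theta}\bigl(f\circ(\I+\theta)\bigr)\big|_{0}\Vd = \Grad f \cdot \Vd$; and the Jacobian contributes $\Div_\Gamma \Vd$. Putting these together yields
\begin{align*}
 dJ(\Vd) = \int_{\Gamma_0} \dot{z}(\Vd)\, f \, ds + \int_{\Gamma_0} z(0)\, \Grad f \cdot \Vd \, ds + \int_{\Gamma_0} z(0)\, f\, \Div_\Gamma \Vd \, ds.
\end{align*}
I would then decompose $\Vd = (\tv\cdot \Vd)\tv + (\nv\cdot \Vd)\nv$ and $\Grad f = (\partial_\tv f)\tv + (\partial_\nv f)\nv$, and use the 2D identity $\Div_\Gamma \Vd = \partial_\tv(\tv\cdot \Vd) + \kappa\,(\nv\cdot \Vd)$, where $\kappa$ is the (signed) curvature of $\Gamma_0$.

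The next step is to convert the material derivative into the shape derivative via Definition \ref{def:elliptic bd shape derivative}: $\dot{z}(\Vd) = z'(\Vd) + \partial_\tv z(0)\,(\tv\cdot \Vd)$. Substituting and collecting all tangential terms produces
\begin{align*}
 \int_{\Gamma_0} \bigl[\partial_\tv z(0) f + z(0)\,\partial_\tv f\bigr](\tv\cdot \Vd)\, ds + \int_{\Gamma_0} z(0) f\,\partial_\tv(\tv\cdot \Vd)\, ds,
\end{align*}
which is $\int_{\Gamma_0} \partial_\tv\bigl(z(0) f\,(\tv\cdot \Vd)\bigr) ds = 0$ by tangential integration by parts on the closed curve $\Gamma_0$ (the boundary of the bounded domain $\Omega_0$ has no endpoints). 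What remains is exactly
\begin{align*}
 dJ(\Vd) = \int_{\Gamma_0} z'(\Vd)\, f \, ds + \int_{\Gamma_0} \bigl(z(0)\,\partial_\nv f + \kappa\, z(0)\, f\bigr)(\nv\cdot \Vd)\, ds,
\end{align*}
which is the first formula of the lemma. The final assertion follows by invoking Lemma \ref{shape:lemma:boundary shape derivative} to write $z'(\Vd) = y'(\Vd)|_{\Gamma_0} + \partial_\nv y(0)\,(\nv\cdot \Vd)$ and substituting into the formula above.

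The main technical obstacle is the cancellation step: one has to verify that the tangential contributions of $\Grad f\cdot \Vd$ and $\Div_\Gamma \Vd$ combine cleanly into the tangential-derivative of the product $z(0) f\,(\tv\cdot \Vd)$, so that the closedness of $\Gamma_0$ eliminates them entirely and only a normal component survives. The required regularity $z(0)\in H^{3/2}(\Gamma_0)$, $f\in H^2(\R^2)$ and $\Vd \in C^k$ with $k\geq 2$ exactly suffices for these tangential derivatives to be well-defined in $L^1(\Gamma_0)$ and for $\kappa \in C^0(\Gamma_0)$ to multiply boundary traces legally, so no further regularity theory beyond the trace theorem and standard Sobolev calculus is needed.
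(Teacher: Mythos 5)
Your derivation is correct, but note that the paper itself offers no proof of this lemma: it is imported verbatim from \cite{sokolowski1992introduction}, and your argument (pull back by the tangential Jacobian $\omega(\theta)$, use $\dot{\omega}(\Vd)=\Div_{\Gamma}\Vd$, split $\Vd$ and $\Grad f$ into tangential and normal parts via the Frenet relations $\partial_\tv\tv=-\kappa\nv$, $\partial_\tv\nv=\kappa\tv$, and annihilate the resulting exact tangential derivative $\partial_\tv\bigl(z(0)f\,(\tv\cdot\Vd)\bigr)$ by integrating over the closed curve $\Gamma_0$) is precisely the standard proof given in that reference. The only points worth making explicit are the sign convention for $\kappa$ (it must match the one making $\Div_\Gamma\Vd=\partial_\tv(\tv\cdot\Vd)+\kappa(\nv\cdot\Vd)$ hold with the outward normal, which is the convention the paper uses in Theorem \ref{ds:thm:approximate control pot}) and that the differentiability of $\theta\mapsto f\circ(\I+\theta)$ for $f\in H^2(\R^2)$ is meant in the $H^1$-trace sense rather than pointwise, which is covered by the cited estimates of \cite{simon1980differentiation}.
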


The existence proofs for the material derivatives rely on the implicit function theorem:
\begin{theorem}[Implicit Function Theorem, from \cite{amann2008analysis2}]
\label{thm:implicit function}
 Let $E_1$, $E_2$, $F$ be Banach spaces, let $W$ be open in $E_1 \times E_2$ and let $f \in C^q(W, F)$. Suppose that $(x_0, y_0) \in W$ such that $f(x_0, y_0) = 0$ and
 \begin{align}
  D_2 f(x_0, y_0): E_2 \rightarrow F
 \end{align}
 is an isomorphism. Then, there are open neighborhoods $U \subset W$ of $(x_0, y_0)$ and $V \subset E_1$ of $x_0$ and a unique $\mathcal{G} \in C^q(V, E_2)$ such that
 \begin{align}
  \mbox{($(x, y) \in U$ and $f(x, y) = 0$)}
  \Leftrightarrow
  \mbox{($x \in V$ and $y = \mathcal{G}(x)$)}.
 \end{align}
\end{theorem}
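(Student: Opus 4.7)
The plan is to prove the theorem via the Banach fixed point theorem, followed by an inductive bootstrap for higher-order regularity. First I would normalize: replacing $f$ by its translate $(x, y) \mapsto f(x + x_0, y + y_0)$, we may assume without loss of generality that $(x_0, y_0) = (0, 0)$ and $f(0, 0) = 0$. Let $A := D_2 f(0, 0)$, which is an isomorphism $E_2 \to F$ by assumption, and define
\[
T(x, y) := y - A^{-1} f(x, y).
\]
A point $y$ satisfies $f(x, y) = 0$ precisely when $T(x, y) = y$, so solving the equation becomes a parametric fixed-point problem in $y$.

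Next I would set up the contraction. Since $D_2 T(x, y) = I - A^{-1} D_2 f(x, y)$ vanishes at $(0, 0)$ and is continuous, after shrinking $W$ one has $\| D_2 T(x, y) \|_{\mathcal{L}(E_2)} \le 1/2$ on $W$, and the mean value inequality in Banach spaces gives
\[
\| T(x, y_1) - T(x, y_2) \|_{E_2} \le \tfrac{1}{2} \| y_1 - y_2 \|_{E_2}
\]
on any convex subneighborhood. Using continuity of $f$ with $f(0, 0) = 0$, one chooses $r, \rho > 0$ with $\overline{B_\rho(0)} \times \overline{B_r(0)} \subset W$ and $\| A^{-1} f(x, 0) \|_{E_2} \le r/2$ for $\| x \|_{E_1} \le \rho$; the two estimates together force $T(x, \cdot)$ to map $\overline{B_r(0)} \subset E_2$ into itself for each $x \in B_\rho(0)$. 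The Banach fixed point theorem then produces a unique $\mathcal{G}(x) \in \overline{B_r(0)}$ with $f(x, \mathcal{G}(x)) = 0$, and the standard parameter-dependent estimate
\[
\| \mathcal{G}(x_1) - \mathcal{G}(x_2) \|_{E_2} \le 2 \| T(x_1, \mathcal{G}(x_2)) - T(x_2, \mathcal{G}(x_2)) \|_{E_2}
\]
combined with (Lipschitz) continuity of $x \mapsto T(x, y)$ yields continuity and in fact Lipschitz continuity of $\mathcal{G}$ on $V := B_\rho(0)$. Setting $U := V \times B_r(0)$ gives the claimed local-uniqueness equivalence.

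For $C^1$-differentiability I would identify the candidate derivative
\[
D\mathcal{G}(x) = -[D_2 f(x, \mathcal{G}(x))]^{-1} \circ D_1 f(x, \mathcal{G}(x)),
\]
which is well defined after possibly shrinking $V$, since the set of invertible elements of $\mathcal{L}(E_2, F)$ is open in the operator-norm topology. Expanding
\[
0 = f(x+h, \mathcal{G}(x+h)) - f(x, \mathcal{G}(x)) = D_1 f \cdot h + D_2 f \cdot (\mathcal{G}(x+h) - \mathcal{G}(x)) + o(\|h\|_{E_1} + \|\mathcal{G}(x+h) - \mathcal{G}(x)\|_{E_2}),
\]
inverting $D_2 f(x, \mathcal{G}(x))$, and using the Lipschitz bound on $\mathcal{G}$ to replace $\| \mathcal{G}(x+h) - \mathcal{G}(x) \|$ inside the remainder by $O(\|h\|)$ absorbs the remainder into $o(\|h\|)$, confirming $\mathcal{G} \in C^1$ with the displayed derivative. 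Finally I would bootstrap to $C^q$ by induction on $q$: the displayed formula expresses $D\mathcal{G}$ as a composition of the $C^\infty$ inversion map on the open set of isomorphisms, of $Df \in C^{q-1}$, and of $x \mapsto (x, \mathcal{G}(x))$, so the inductive hypothesis on $\mathcal{G}$ upgrades $D\mathcal{G}$ from $C^{k}$ to $C^{k+1}$ up to $q-1$, giving $\mathcal{G} \in C^q(V, E_2)$.

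The main obstacle is the $C^1$ step: the difference quotient $\| \mathcal{G}(x+h) - \mathcal{G}(x) \| / \| h \|$ must be controlled before differentiability is in hand, and this is precisely what the contraction-derived Lipschitz estimate provides, making the $o(\cdot)$ remainder legitimately $o(\| h \|)$. The normalization, the contraction setup, uniqueness, and the inductive bootstrap are all routine once this quantitative link between fixed-point error control and linearization is in place.
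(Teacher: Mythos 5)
The paper does not prove this statement at all: it is quoted verbatim as a background result from the cited reference (Amann--Escher, \emph{Analysis II}), so there is no in-paper argument to compare against. Your proof is the standard contraction-mapping argument --- normalization, the map $T(x,y)=y-A^{-1}f(x,y)$, self-mapping and contraction estimates, Lipschitz dependence on the parameter, the explicit formula for $D\mathcal{G}$, and the inductive bootstrap to $C^q$ --- and it is correct (implicitly assuming $q\ge 1$, as the hypothesis on $D_2f$ requires); this is essentially the proof given in the cited source.
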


\section{Existence and Uniqueness of Solutions for PDE}
\label{sec:appendix:existence}
Based on the notation of \cite{wloka1987partial} we introduce elliptic and coercive bilinear forms which give rise to usual existence existence results for partial differential equations.

\begin{definition}[$V$-Elliptic]
\label{def:V-Elliptic}
 Let $m \geq 1$ and let $V$ be a closed subspace equipped with the $H^m(\Omega)$-norm between $H^m_0(\Omega) \subset V \subset H^m(\Omega)$. We call a bilinear form $a: H^m(\Omega) \times H^m(\Omega) \rightarrow \R$ $V$-elliptic if and only if
 \begin{enumerate}
  \item $\abs{a(\psi, \phi)} \leq c_1 \vectornorm{\psi}_{H^m(\Omega)} \vectornorm{\phi}_{H^m(\Omega)}$, for all $\psi, \phi \in H^m(\Omega)$
  \item $a(\psi, \psi) \geq c_2 \vectornorm{\psi}^2_{H^m(\Omega)}$, for all $\psi \in V$
 \end{enumerate}
 where $c_1, c_2 > 0$  are independent of $\psi$ and $\phi$.
\end{definition}

\begin{definition}[$V$-Coercive]
\label{def:V-Coercive}
 Let $m \geq 1$ and let $V$ be a closed subspace equipped with the $H^m(\Omega)$-norm between $H^m_0(\Omega) \subset V \subset H^m(\Omega)$. We call a bilinear form $a: H^m(\Omega) \times H^m(\Omega) \rightarrow \R$ $V$-coercive if and only if
 \begin{enumerate}
  \item $\abs{a(\psi, \phi)} \leq c_1 \vectornorm{\psi}_{H^m(\Omega)} \vectornorm{\phi}_{H^m(\Omega)}$, for all $\psi, \phi \in H^m(\Omega)$
  \item $a(\psi, \psi) + k \vectornorm{\psi}_{L^2(\Omega)}^2 \geq c_2 \vectornorm{\psi}^2_{H^m(\Omega)}$, for all $\psi \in V$
 \end{enumerate}
 where $c_1, c_2 > 0$ and $k \in \R$ are constants independent of $\psi$ and $\phi$.
\end{definition}

For $V$-elliptic problems we can apply the Lax-Milgram Lemma to provide the existence of a unique solution.
\begin{theorem}[Lax-Milgram, from \cite{wloka1987partial}]
\label{elliptic:thm:existence}
 Let $a(\psi, \phi)$ be $V$-elliptic and let $f \in V'$. Then there exists a unique $\psi \in V$ such that
 \begin{align}
  \label{elliptic:eq:existence}
  a(\psi, \phi) = \dual{f, \phi}_{V', V}
 \end{align}
 for all $\phi \in V$.
\end{theorem}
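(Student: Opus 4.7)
The plan is the classical Riesz-representation route. Since $V$ is a closed subspace of $H^m(\Omega)$ and the norm on $V$ is inherited from $H^m(\Omega)$, the space $V$ is itself a Hilbert space with scalar product $(\cdot,\cdot)_V := (\cdot,\cdot)_{H^m(\Omega)}$. I will recast the bilinear form and the right-hand side as objects living in $V$, and then invert a bounded linear operator on $V$.

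\textbf{Step 1: Represent the bilinear form by an operator.} For each fixed $\psi \in V$, the map $\phi \mapsto a(\psi,\phi)$ is a continuous linear functional on $V$: linearity is immediate from bilinearity, and continuity follows from the first condition in Definition \ref{def:V-Elliptic}, which gives $|a(\psi,\phi)| \leq c_1 \|\psi\|_V \|\phi\|_V$. By the Riesz representation theorem there exists a unique $A\psi \in V$ with
\begin{align}
a(\psi,\phi) = (A\psi,\phi)_V \qquad \text{for all } \phi \in V.
\end{align}
Linearity of $a$ in its first argument forces $A:V\to V$ to be linear, and setting $\phi = A\psi$ yields $\|A\psi\|_V^2 = a(\psi, A\psi) \leq c_1 \|\psi\|_V \|A\psi\|_V$, so $A$ is bounded with $\|A\|\leq c_1$. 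Similarly, since $f \in V'$ is a continuous linear functional on the Hilbert space $V$, there exists a unique $f_\ast \in V$ with $\dual{f,\phi}_{V',V} = (f_\ast,\phi)_V$ for all $\phi \in V$. The problem \eqref{elliptic:eq:existence} is then equivalent to solving the operator equation $A\psi = f_\ast$ in $V$.

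\textbf{Step 2: Invert $A$ using ellipticity.} The $V$-ellipticity estimate from Definition \ref{def:V-Elliptic}(2) gives
\begin{align}
c_2 \|\psi\|_V^2 \leq a(\psi,\psi) = (A\psi,\psi)_V \leq \|A\psi\|_V \|\psi\|_V \qquad \text{for all } \psi \in V,
\end{align}
hence $\|A\psi\|_V \geq c_2 \|\psi\|_V$. This bound simultaneously gives injectivity of $A$ and shows that the range $\im A$ is closed in $V$: if $A\psi_n \to y$, then $(\psi_n)$ is Cauchy by the inequality, $\psi_n \to \psi$ in $V$, and continuity of $A$ gives $A\psi = y$. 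To obtain surjectivity, pick any $\phi \in (\im A)^\perp$. Then $(A\phi,\phi)_V = 0$, so $a(\phi,\phi) = 0$, and the ellipticity lower bound yields $\phi = 0$. Hence $(\im A)^\perp = \{0\}$ and $\im A = V$. Thus $A$ is a bijection with bounded inverse, and $\psi := A^{-1} f_\ast$ is the unique element of $V$ satisfying \eqref{elliptic:eq:existence}.

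The main conceptual step is the surjectivity argument: one must know that a closed subspace of a Hilbert space whose orthogonal complement is trivial must be the whole space, which is why the ellipticity lower bound is applied twice, once to close the range and once to eliminate the orthogonal complement. Everything else reduces to Riesz representation and the two inequalities in Definition \ref{def:V-Elliptic}.
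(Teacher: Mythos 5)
Your proof is correct and complete. Note that the paper itself offers no proof of this statement: it is quoted verbatim from \cite{wloka1987partial} as a known result, so there is no internal argument to compare against. What you have written is the classical Riesz-representation proof of Lax--Milgram, and you handle the one genuinely delicate point correctly: surjectivity of $A$ requires \emph{both} that $\im A$ is closed (via the lower bound $\lnorms{A\psi}_V \geq c_2 \lnorms{\psi}_V$) and that $(\im A)^\perp = \{0\}$ (via $a(\phi,\phi) \geq c_2\lnorms{\phi}_V^2$ applied to $\phi \in (\im A)^\perp$), since only a closed subspace with trivial orthogonal complement is all of $V$. The only cosmetic remark is that Definition \ref{def:V-Elliptic} states the continuity bound on all of $H^m(\Omega)$ while ellipticity holds only on $V$; your restriction of everything to the Hilbert space $V$ is exactly the right reading.
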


A key part in our line of proof is the uniqueness question addressed in Lemma \ref{zua:lemma:Stokes Uniqueness}. However, since the corresponding bilinear form is only $V$-coercive we rely on the following theorem, which does not provide uniqueness, but states that the space of homogeneous solutions is finite dimensional.
\begin{theorem}[from \cite{wloka1987partial}]
\label{pre:thm:Fredholm Alternative}
 Let $V \embed L^2(\Omega) \embed V'$ be a Gelfand triple and let the embedding $V \embed L^2(\Omega)$ be compact. Let $a(\psi, \phi)$ be $V$-coercive, then
 \begin{align}
   \mathcal{Z} = \{\psi \in V; a(\psi, \phi) = 0 \mbox{ for all $\phi \in V$}\}
  \end{align}
 is a finite dimensional subspace of $V$. Furthermore, if $0$ is no eigenvalue of the corresponding representation operator then $\mathcal{Z} = \{0\}$ holds.
\end{theorem}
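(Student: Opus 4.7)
The plan is to reduce the statement to the classical Fredholm alternative for compact perturbations of the identity, by exploiting coercivity to shift the bilinear form into an elliptic one and then using the compactness of the embedding $V \embed L^2(\Omega)$.

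First, by $V$-coercivity (Definition \ref{def:V-Coercive}) there exists $k \in \R$ such that
\begin{align}
\tilde{a}(\psi,\phi) := a(\psi,\phi) + k\,(\psi,\phi)_{L^2(\Omega)}
\end{align}
is $V$-elliptic. By the Lax-Milgram Theorem \ref{elliptic:thm:existence}, $\tilde{a}$ defines a topological isomorphism $A: V \to V'$ via $\dual{A\psi,\phi}_{V',V} = \tilde{a}(\psi,\phi)$. Next, let $\iota: V \embed L^2(\Omega)$ denote the compact embedding and let $\iota^*: L^2(\Omega) \embed V'$ be the continuous embedding coming from the Gelfand triple structure. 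Define $J := \iota^* \circ \iota: V \to V'$, which is compact as the composition of a compact and a continuous map.

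The condition $\psi \in \mathcal{Z}$, i.e., $a(\psi,\phi) = 0$ for all $\phi \in V$, is equivalent to $\tilde{a}(\psi,\phi) = k(\psi,\phi)_{L^2}$ for all $\phi$, which, via the isomorphism $A$, rewrites as
\begin{align}
A\psi = k\,J\psi \qquad \Longleftrightarrow \qquad (\mathrm{Id} - K)\psi = 0, \quad \text{where } K := k\,A^{-1}J: V \to V.
\end{align}
Since $A^{-1}:V' \to V$ is bounded and $J: V \to V'$ is compact, $K$ is a compact operator on $V$. By the Riesz-Schauder theory for compact operators, $\ker(\mathrm{Id} - K)$ is a finite dimensional subspace of $V$, and this kernel coincides with $\mathcal{Z}$, establishing the first claim.

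For the second part, the ``representation operator'' is precisely $T := \mathrm{Id} - K$ (equivalently, one may view the problem through the operator $A^{-1} \circ (\text{canonical inclusion})$ on $V$; in either formulation, $\mathcal{Z}$ is the $0$-eigenspace). If $0$ is not an eigenvalue of $T$, then $\ker T = \{0\}$, whence $\mathcal{Z} = \{0\}$. The main obstacle is conceptual rather than computational: making precise what ``representation operator'' means so that the second assertion is an immediate restatement of triviality of $\ker(\mathrm{Id}-K)$. Since the paper invokes this result by citation to \cite{wloka1987partial}, I would state the proof at the level of detail above and reference Wloka for the Riesz-Schauder step, which is the technically heaviest ingredient but entirely standard once $K$ is identified as a compact perturbation of the identity.
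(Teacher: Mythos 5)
Your argument is correct: shifting $a$ by $k(\cdot,\cdot)_{L^2}$ to obtain a $V$-elliptic form, inverting it via Lax--Milgram, and recognizing $\mathcal{Z}=\ker(\mathrm{Id}-K)$ for a compact $K$ is exactly the standard reduction to Riesz--Schauder theory, and your identification of the ``representation operator'' with $\mathrm{Id}-K$ (so that the second assertion is a restatement of injectivity) is the intended reading. Note, however, that the paper offers no proof of its own to compare against --- the theorem is quoted verbatim from Wloka's book --- so your write-up is in effect supplying the omitted argument, and it does so correctly and along the same lines as the cited source.
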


\end{appendix}

  \section*{Acknowledgments}
This work was supported by the German Federal Ministry of Education and Research (BMBF) grant no. 03MS606F.


\end{document}